\newtheorem{theorem}{Theorem}
 \newenvironment{varalgorithm}[1]
  {\algorithm}
  {\endalgorithm}
\newtheorem{lemma}{Lemma}
\newtheorem{cor}{Corollary}
\theoremstyle{definition}
\newtheorem{definition}{Definition}
\newtheorem{ass}{Assumption}
\newtheorem{prop}{Proposition}
\theoremstyle{remark}
\newtheorem{remark}{Remark}
\newcommand*{\norm}[1]{\|#1\|}
\DeclareMathOperator{\dist}{dist}
\newcommand*{\inner}[2]{\langle#1,#2\rangle}
\DeclareMathOperator*{\dom}{dom}
\DeclareMathOperator*{\Lev}{Lev}
\newcommand*{\lev}[2]{{\Lev\nolimits}_{{#1}} {(#2)}}
\newcommand{\ubar}[1]{\underaccent{\bar}{#1}}
\DeclareMathOperator*{\argmin}{\arg\!\min}
\DeclareMathOperator*{\Prox}{prox}
\newcommand*{\prox}[2]{\Prox\nolimits_{#1}(#2)}
\newcommand{\shim}[1]{\textcolor{red}{#1}}
\newcommand{\R}{\mathbb{R}}
\newcommand{\N}{\mathbb{N}}
\newcommand{\revise}[1]{\textcolor{black}{#1}}
\title{On the Convergence Rates of Iterative Regularization Algorithms for Composite Bi-Level Optimization}%\thanks{Submitted to the editors DATE.
\author{Shimrit Shtern\thanks{Faculty of Data and Decision Sciences, Technion -- Israel Institute of Technology, Haifa, Israel.}%\url{http://www.imag.com/\string~ddoe/}).}
\and Adeolu Taiwo\footnotemark[1]}
\date{}
\begin{document}

\maketitle

% REQUIRED
\begin{abstract}
This paper investigates iterative methods for solving bi-level optimization problems where both inner and outer functions have a composite structure. We \revise{establish novel theoretical results, including the first  analysis that provides simultaneous convergence rates} for the {Iteratively REgularized} Proximal Gradient (IRE-PG) method, a variant of Solodov's algorithm. \revise{These rates for the inner and outer functions highlight the inherent trade-offs between their respective convergence behaviors.} We further extend this analysis to an accelerated version of IRE-PG, proving faster convergence rates under specific settings. Additionally, we propose a new scheme for handling cases where these methods cannot be directly applied to the bi-level problem due to the difficulty of computing the associated proximal operator. This scheme offers surrogate functions to approximate the original problem and a framework to translate convergence rates between the surrogate and original functions. Our results show that the accelerated method’s advantage diminishes under this translation.\end{abstract}

% REQUIRED
%\begin{keywords}
%bi-level optimization, first-order method, convergence analysis, \LaTeX
%\end{keywords}

% REQUIRED
%\begin{MSCcodes}
%90C25, 65K05, 49M37
%\end{MSCcodes}

\section{Introduction}\label{Sec1}

 Bi-level optimization problems consist of solving an optimization problem (the outer level) where some of the variables are constrained to the solution set of another optimization problem (the inner level). This framework serves as a model for a myriad of applied problems in diverse fields such as decision sciences, machine learning, regression analysis, and signal processing. A review of bi-level optimization problems and their applications can be found in \cite{dempezemkoho2020, Sin}
 
    The simple convex bi-level problem consists of solving an outer problem given by:
\begin{equation}\label{Bilevel}\tag{BLO}
	 	\min_{x\in X^*}  \omega(x),
    \end{equation}
     where the outer function $\omega$ is convex, and $X^*$ is the nonempty solution set of an inner-level optimization problem  \begin{equation*}%\label{equiv2}
    \min_{x\in \R^n}\varphi(x),
    \end{equation*}
    where the inner function $\varphi$ is also convex.
    An inherent difficulty associated with solving problem \eqref{Bilevel} is that the problem does not satisfy regularity conditions, so duality or KKT-based methods cannot be applied.

First-order methods are iterative techniques that rely only on function values and their (sub)gradients. They are widely used in machine learning and signal processing due to their low memory usage and computational cost per iteration, making them particularly suitable for solving large-scale problems. See the book \cite{beck2017} and the recent survey \cite{DvuShtSta2021} for an overview of first-order methods for convex optimization. Specifically, in this work, we focus on first-order methods to solve problem \eqref{Bilevel}.

A popular approach to solve problem \eqref{Bilevel} is to iteratively solve the following regularized minimization problem 
   \begin{equation}
      \min_{x\in \R^n}\{\phi_{k}:= \varphi(x)+\sigma_k\omega(x)  \}\label{Tikhonov}
  \end{equation}
  for some well-chosen sequence $\{\sigma_k\}_{k\in\N}$.
 This method is reminiscent of Tikhonov-type regularization suggested for solving an under-determined set of linear equalities \revise{\cite{tikhonov1963solution,Tikhonov:1977} where $\omega$ was set as the $\ell_2$ norm squared. %\revise{Since the seminal paper by Tikhonov, other ground breaking research on iterative regularization method have been reported in the literature. %For instance, based on the principle of iterative regularization, Bakushinskii \cite{BAKUSHINSKII197712} proposed some methods for solving monotonic variational inequalities. 
For a comprehensive overview of early work on the iterative regularization method, the reader is referred to \cite{tikhonov_numerical_1995,tikhonov_nonlinear_1998}, while the continuous version of this approach is studied in \cite{alber_continuous_1968, amochkina1997continuous, vasil1997continuous}. %,Tran2023}. 
 Friedlander and Tseng \cite{friedlander_exact_2008} identify conditions guaranteeing a bound $\bar{\sigma}$ such that solving \eqref{Tikhonov} with any $\sigma_k \leq \bar{\sigma}$ produces exact solutions to \eqref{Bilevel}; in practice, however, this threshold is usually unknown.}   

\revise{Since solving problem~\eqref{Tikhonov} exactly for various values of $\sigma_k$ is computationally demanding, iterative algorithms have been proposed to approximate the solution of $\phi_k$ at each iteration $k$. The origin of such algorithms can be traced back to the work of Cabot \cite{cabot2005}, who introduced an iterative proximal-type algorithm for $\phi_k$ and established its convergence to the solution set of problem~\eqref{Bilevel} in the case where $\omega$ is a real-valued function and $\varphi$ is a closed and convex function.} Later, Solodov \cite{Sol} proposed an explicit first-order descent method under the additional assumption that $\varphi(x)=f(x)+\delta_X(x)$, where $f$ and $\omega$ are smooth with locally Lipschitz-continuous gradients, and $\delta_X$ is the indicator function of a closed and convex set $X$, called hereinafter the smooth case. This method replaces the proximal step in \cite{cabot2005} with a projected gradient step and establishes convergence for a regularization sequence $\{\sigma_k\}_{k\in\N}$ that satisfies $\lim_{k\to \infty}\sigma_k = 0$ and $\sum_{k=1}^{\infty}\sigma_k = +\infty$. Although both methods proved convergence, they did not provide explicit convergence rates.

 In the past decade, first-order algorithms with proven convergence rates have been proposed for the case of strongly convex $\omega$. 
 For strongly convex $\omega$,   Beck and Sabach \cite{becsab2014} and later Sabach and Shtern \cite{sabachshtern2017} provided algorithms with proven inner function \revise{convergence} rates of $\mathcal{O}(1/\sqrt{k})$ and $\mathcal{O}(1/{k})$, respectively. 
  In both algorithms, the function $\varphi$ is assumed to be a composite function, that is $\varphi=f+g$, where $f$ is a convex and differentiable function with a $L_f$-Lipschitz continuous gradient ($L_f$-smooth), and  $g$ is a proper, closed, and convex function.
 %. The MNG method is shown to solve \eqref{Bilevel} in the smooth case, with an $\mathcal{O}(1/\sqrt{k})$ rate of convergence for the inner function. These results can be  easily extended to  the composite case, in which $\varphi=f+g$, where $f$ \ade{is smooth with $L_f$-}Lipschitz continuous gradient (\ade{$L_f$-} smooth), and  $g$ is a proper, closed, and convex function. In \cite{sabachshtern2017}, Sabach and Shtern proposed the Bilevel Gradient Sequential Averaging Method (BiG-SAM) for a similar setting, which generates a sequence that combines a gradient step on $\omega$ with a proximal gradient (PG) step on $\varphi$. 
  %They proved that BiG-SAM has an 
  %improved inner function convergence rate of $\mathcal{O}(1/k)$ and provided an extension to handle nonsmooth $\omega$ using a smoothing technique. %, which results  in a slower convergence rate.
  Amini and Yousefian \cite{amiyou2019}, presented the IR-PG method, an iterative regularization method %the Iterative Regularized Incremental Projected Subgradient (IR-PG) 
  for the case where both $\omega$ and $\varphi$ are {possibly nondifferentiable}, which converges to the solution of problem \eqref{Bilevel} with a rate of $\mathcal{O}(1/k^{0.5-\beta})$ for the inner level problem, where $\beta\in (0, 0.5)$. 
  
First-order algorithms with proven convergence rates for both the inner and outer functions have only been introduced within the last few years.
 In \cite{KaushikYousefian2021}, Kaushik and Yousefian proposed an Iteratively Regularized (sub)Gradient (a-IRG) method for solving 
 the problem of minimizing a convex $\omega$ with bounded subgradients over a given closed and convex set $X$, where the solution has to also 
 %for the case where $f$ is convex with bounded subgradients over a convex set $X$, 
 %where the upper level is 
 %and $X^*$ is the 
 satisfy a monotone variational inequality defined over $X$. Their problem structure includes problem~\eqref{Bilevel}  where $\varphi=f+\delta_X$, and $f$ is {$L_f$-}smooth and has bounded gradients over $X$.  The authors proved  convergence rates of $\mathcal{O}(1/k^{0.5-\beta})$ and $\mathcal{O}(1/k^\beta)$ for the outer- and inner functions, respectively, where $\beta\in(0,0.5)$. \revise{Similarly, when assuming $f$ is convex, $X$ is compact, and $\omega$ is convex and smooth, Giang-Tran et al. \cite{giang-tran2024} proposed using iterative regularization paired with the conditional gradient algorithm, and showed a $\mathcal{O}(1/k^{1-\beta})$ and $\mathcal{O}(1/k^\beta)$ for the outer and inner functions, respectively, where $\beta\in(0,1)$, and improve this rate under additional structural assumptions.}  
 Other first-order algorithms, which are not based on iterative regularization, also proved convergence with regard to both inner and outer functions. Specifically, Shen et al. \cite{shen2023online} presented online convex optimization algorithms, \cite{doronshtern2023} presented a nested algorithm,  \cite{merchavshoham2023} presented an alternating proximal gradient algorithm, and \cite{jiangetal2023} presented a conditional gradient based algorithm for solving {problem \eqref{Bilevel}} under various assumptions on functions $\varphi$ and $\omega$.
 %and using different operators.  
 %In \cite{shen2023online}, Shen et al. proposed two online convex optimization algorithms for solving convex bi-level problems. Under the assumptions that $\varphi=f+\delta_X$,  for a compact set $X$, and that $\omega$ and $f$ are bounded and have bounded subgradients, they demonstrated that for a predetermined accuracy $\epsilon$ the inner and outer problems achieve an iteration complexity of $\mathcal{O}(1/\epsilon^{3})$. If in addition, $\omega$ and $f$ are Lipschitz smooth they proved an improved iteration complexity  of $\mathcal{O}(1/\epsilon^2)$.
 %Merchav and Sabach \cite{merchavshoham2023} studied the bi-level optimization problem \eqref{Bilevel} for composite inner level  and outer function which is real valued, convex, coercive, and either has quasi-Lipschitz subgradients or is composite, and proposed the Bi-Sub-Gradient (Bi-SG) method. 
 %The Bi-SG alternates between {applying} PG step on the inner level problem and then {performing} a either a subgradient step or PG step on the outer objective function,  
 Specifically, Merchav and Sabach \cite{merchavshoham2023} provided  
 $\mathcal{O}(1/k^{\beta})$ and $\mathcal{O}(1/k^{1-\beta})$ convergence rates for the inner and outer functions, respectively, for any  $\beta\in(0.5,1)$, which are the best rates achieved under the assumptions that both $\varphi$ and $\omega$ are composite functions (the composite case), $\omega$ is real-valued and coercive, and that only use proximal gradient operators on $\omega$ and $\varphi$. \revise{Recently, \cite{latafat2024} suggested several versions of iterative regularization scheme for the composite case, however, they did not provide convergence guarantees to a solution of the bilevel problem, or rates on inner and outer function values. We note that after the first submission of the current manuscript, \cite{bot_accelerating_2025} presented an analysis of an iterative regularization algorithm for the composite case under additional assumption of H\"olderian error bounds on the lower level function.} 
To the best of our knowledge, {no iterative regularization scheme  has been analyzed for \revise{the composite case} without further assumptions.}

Additionally, during the last few years, accelerated first-order based schemes for solving problem~\eqref{Bilevel} have been gaining attention. Initially, only convergence proofs without rates were available for such algorithms \cite{HelouSim2017, shehuvuongzemkoho2021, duanzhang2023}. 
 % . Specifically, in \cite{HelouSim2017}, Helou and Sim\~{o}es introduced a three step accelerated $\epsilon$-subgradient for the case where $\omega$ has bounded subgradients and $\varphi=f+\delta_X$ with $f$ being Lipschitz smooth with bounded gradients, while iBiG-SAM proposed by Shehu et al. \cite{shehuvuongzemkoho2021}, and  aiBiG-SAM, miBig-SAM, and amiBiG-SAM proposed by Duan and Zhang \cite{duanzhang2023} were suggested as accelerated versions of BiG-SAM with similar assumptions on the structure of $\omega$ and $\varphi$.
In this work, we focus on algorithms with proven convergence rates, specifically those that do not require weak-sharpness or H\"olderian error bound assumptions on $\varphi$---assumptions that have been utilized to obtain {improved}  convergence rates in works such as \cite{samadietal2023,cao2024accelerated,chenetal2024,merchav2024,bot_accelerating_2025}. \revise{Without such additional  assumptions,} Samadi et al. \cite{samadietal2023} proposed Regularized Accelerated Proximal Method for the case of Lipschitz smooth $\omega$ and composite $\varphi$  
% Samadi et al. \cite{samadietal2023} proposed the Regularized Accelerated Proximal Method, %(R-APM),
% for a predefined regularization parameter $\sigma$, depending on both the weak-sharpness parameter and 
% attaining and the gradient of $\omega$ in the optimal solution, proving convergence rates of $\mathcal{O}(1/k^2)$ for both outer and inner functions. 
% We note that  determining the value of the parameter $\sigma$ requires knowledge of the weak-sharpness constant as well as  the value of the gradient of $\omega$ at the optimal point. %\ade{for the case where $\varphi$ has a general composite structure}. %\shim{what do they prove with this property?} 
 %R-APM 
 %attaining a convergence rate of $\mathcal{O}(1/k^2)$ for both outer and inner level problems under the additional assumption of weak-sharpness of $\varphi$. \ade{Note that $\sigma_k=\sigma$ is constant throughout the iterations. To obtain the stated rate for $\varphi$, the choice of $\sigma$ requires a prior knowledge of the weak sharpness constant and the gradient of $\omega$ at an optimal solution $x^*$.} %and the regularization parameter choosing explicitly and below a certain threshold depending on the constant of depending on the weak-sharpness constant. 
% In the absence of weak sharpness property, 
%proposed an algorithm 
and proved that for a 
predefined iteration number $K>0$, taking the regularization parameter as $(K+1)^{-1}$ an optimality gap of $\mathcal{O}(1/K)$ is obtained for both the inner and outer functions.  
Cao et al. \cite{cao2024accelerated} suggested an \revise{accelerated projected gradient method} for the smooth setting with compact set $X$, and proved an $\mathcal{O}(1/k^2)$ inner and $\mathcal{O}(1/k)$ outer convergence rates. \revise{Under a similar smooth setting, \cite{zhang_functionally_2024} showed that for a predefined $\epsilon$ combining the generalized accelerated gradient method \cite[Scheme (2.3.13)]{nesterov_lectures_2018} and bisection, to obtain an $O(1/\sqrt{\epsilon}\log(1/\epsilon))$ for both inner and outer functions.}
 For the case where both  $\omega$ and $\varphi$ 
  are composite functions,  Wang et. al. \cite{wang2024near} proposed a bisection method for predefined $\epsilon_\varphi$ and $\epsilon_\omega$ that requires $\mathcal{O}(1/\sqrt{\epsilon_\varphi}\log(1/\epsilon_\omega))$ iterations. 
 Finally,  \cite{merchav2024} proposed the Fast Bi-level Proximal Gradient (FBi-PG), an iterative regularization approach. By choosing the regularization parameter as $\sigma=1/(k+a)$ for some $a\geq 2$ the authors proved 
 simultaneous rates of $\mathcal{O}(\ln(k+1)/k)$ and $\mathcal{O}(1/k)$ for the inner and outer functions, respectively. Notably, the algorithms which do not require a predefined number of iteration or optimality gap,  namely \cite{cao2024accelerated} and \cite{merchav2024}, necessitate computing a potentially  computationally expensive oracle at each iteration. Specifically, \cite{cao2024accelerated} requires a projection over an intersection of two simple sets, and \cite{merchav2024} requires computing the proximal operator on the sum of two proximal friendly functions. However, the computational complexity of these operations is not considered in their analysis. In this work, we investigate an accelerated iterative regularization method, similar to the one presented in  \cite{merchav2024}, and analyze the convergence rate for composite $\varphi$ and $\omega$, taking into account that proximal operators can only be applied to the nonsmooth part of the inner and outer functions separately. 

In summary, this paper aims to provide a better understanding of iterative regularization methods for solving problem \eqref{Bilevel} when both the inner and outer functions have a composite structure. Specifically, the contributions of this paper are as follows:
\begin{enumerate}[label=\arabic*.]
    \item {\bf Convergence Rate Analysis for Iterative Regularization Proximal Gradient Method:}
To the best of our knowledge, we are the first to establish simultaneous convergence rates for the iterative regularization PG method, with both constant and backtracking based step sizes. This algorithm is a variant of Solodov’s method \cite{Sol}, which only deals with composite inner-level problems, and thus closes a gap in the existing literature.
Our convergence rate analysis provides insights into the trade-off between the convergence rates of the inner and outer functions. Specifically, when the regularization parameter is set to $k^{-\beta}$, we obtain convergence rates of $\mathcal{O}(k^{-\beta})$ for the inner function and $\mathcal{O}(k^{-(1-\beta)})$ for the outer function, for any $\beta \in (0,1)$. These results are comparable to the rates achieved by the algorithm proposed in \cite{merchavshoham2023,giang-tran2024}.
    \item {\bf Convergence Rate Analysis for the Accelerated Algorithm}:
We establish simultaneous convergence rates for an accelerated version of the iterative regularization PG method for both constant and backtracking based step sizes. In particular, we obtain a rate of $\mathcal{O}(k^{-(2-\beta)})$ for the outer function and $\mathcal{O}(k^{-\beta}$) for the inner function, for any $\beta\in(1,2)$, and when 
$\beta=1$, the rates include an additional logarithmic term for the inner function. Thus, we illustrate the trade-off in this algorithm between the convergence rates of the inner and outer functions. These convergence rates coincide with the rates obtained in \cite{merchav2024} for 
$\beta=1$ and extend them to a broader range of $\beta$, while eliminating the need to bound the smoothness parameters. Moreover, while \cite{merchav2024} demonstrates the possibility of obtaining a superior convergence rate for the inner function, our results suggest that this cannot be achieved simultaneously with a guaranteed rate for the outer function without additional assumptions.
    \item  {\bf Extension Scheme for Intractable Proximal Operators:}
We propose a scheme to address cases where PG-based iterative methods cannot be directly applied to problem \eqref{Bilevel} due to the difficulty of computing a proximal operator. This scheme not only introduces surrogate functions that can be used in place of 
$\varphi$ and $\omega$, but also provides a translation mechanism to relate the convergence rates of the surrogate functions back to the original problem. Analyzing our convergence results under this scheme, we show that the advantage of the accelerated method diminishes due to this translation. Specifically, if $\omega$ is real-valued, a convergence rate of $\mathcal{O}(\log(k)k^{-1})$ can be maintained for the inner function, but the outer function’s rate decreases to $\mathcal{O}((\log(k)/k)^{0.5})$. Conversely, if $\varphi$ is real-valued, we can only achieve a convergence rate of 
$\mathcal{O}((\log(k)/k)^{0.5})$ for the inner function while preserving a rate of 
$\mathcal{O}(k^{-1})$ for the outer function.
\end{enumerate}

{The paper is structured as follows.}
Section~\ref{Sec2} provides the necessary preliminaries, including assumptions on the model and the algorithms used in this paper, along with proofs for some technical lemmas that are foundational to our analysis. In Section \ref{sec:rate_translate}, we introduce the scheme to handle cases where the proposed algorithms cannot be directly applied to problem~\eqref{Bilevel} and discuss the corresponding rate translation. % our algorithms involve the computation of the proximal operator of sum of two functions, in Section \ref{sec:rate_translate}, we consider an approach which will enable us to evaluate the proximal operator separately for each of the functions. This approach makes our algorithms applicable in 
%for .%the proximal operator of sum of functions is impossible or difficult. 
Section~\ref{sec:four} presents the {Iteratively REgularized} Proximal Gradient algorithm (\ref{alg:IRE-PG}) for solving problem \eqref{Bilevel}, along with its convergence analysis and derivation of its convergence rates. In Section~\ref{sec:five}, we introduce the Iteratively REgularized Accelerated Proximal Gradient algorithm (\ref{alg:IRE-APG}) and establish its convergence rates. \revise{Section~\ref{sec:numetical_experiments} validates these theoretical results through numerical experiments, and Section~\ref{sec:conclusions} provides concluding remarks and future research directions.}
 
\paragraph{Notations} For any $K\in\N$, we use $[K]$ to denote the set $\{1,2,\ldots,K\}$.  
$\bar{\R}$ denotes the extended real-valued set $\R$, and $\N_0$ denotes the set of natural numbers including zero, {\it i.e.}, $\N\cup\{0\}$.
Additionally, for any sequence $\{a_k\}_{k\in\N_0}\subseteq\R$ and any $K<N\in\N_0$ we use the convention $\sum_{k=N}^K a_k =0$. 
For any closed and convex set $\mathcal{C}\subseteq\R^n$ and vector $x\in \R^n$, we define  $\dist(x,\mathcal{C})=\min_{y\in\mathcal{C}} \norm{x-y}$.
For any extended real-valued function $f:\R^n\rightarrow\bar{\R}$ and $c\in\R$, we define the $c$-level set of $f$ as $\lev{f}{c}=\{x\in\R^n:f(x)\leq c\}$.   

\section{Model Assumptions and Mathematical Preliminaries}\label{Sec2}
\subsection{The Composite Simple Bilevel Optimization Model}\label{sec:problem_plus_example}
In this paper, we consider the bi-level optimization problem \eqref{Bilevel} for convex composite functions. 
We define a convex composite function as follows.
 \begin{definition}\label{def:convex_composite}
     Function $\phi=f+g$ is called a convex composite function if $f:\R^n\rightarrow \R$ is convex, differentiable, with an $L_f$-Lipschitz continuous gradient ($L_f$-smooth), and $g:\R^n\rightarrow\bar{\R}$ is proper, convex, and lower semi-continuous.
\end{definition}
Thus, our analysis is done under the following {assumptions}.
\begin{ass}\label{ass:composite}
     $\omega$ and $\varphi$ are convex composite functions. Specifically, $\omega(x)=f_1(x)+g_1(x)$ and $\varphi(x)=f_2(x)+g_2(x)$, such that for $i=1,2,$ \begin{enumerate}[label=(\roman*)]
\item
     $f_i:\R^n\rightarrow\R$ is a convex, differentiable, and has an $L_i$-Lipschitz continuous gradient.
     %that is, its gradients are $L_i$-Lipschitz continuous and
     \item $g_i:\R^n\rightarrow \bar{\R}$ is a proper, convex, and lower semicontinuous function.
   \end{enumerate}  
 \end{ass}
 
 Moreover, in order to ensure that the {problem \eqref{Bilevel}} has an optimal solution we will further assume that $\varphi^*\equiv\min_{x\in \R^n} \varphi(x)>-\infty$ and the minimum is attained, and that $\dom(g_1)\cap\dom(g_2){\neq \emptyset}$. Finally we make the following assumption about $\omega$. %, similar to \cite{merchavshoham2023}.
 
 \begin{ass}\label{ass:omega_min}
   $\omega$  satisfies $$\ubar{\omega}=\min_{x\in \R^n}\omega(x)>-\infty,$$
   the optimal solution set of the bi-level problem
   $\ubar{X}=\argmin_{x\in X^*}\omega(x)$ is nonempty and bounded.
   Consequently, the  minimum is attained and
    $$\omega^*=\min_{x\in X^*} \omega(x)>-\infty.$$
 \end{ass}
 %\begin{ass}\label{ass:coer_omega}
 %Function $\omega$ is coercive.
 %\end{ass}
%\begin{remark}\label{remark}
Assumption~\ref{ass:omega_min} can be satisfied under a variety of conditions. Specifically, 
if $\omega$  is a coercive function (as is assumed in \cite{merchavshoham2023}, for example), then %Assumption~\ref{ass:coer_omega} implies that 
for any $c\in\R$ the level set $\lev{\omega}{c}$
 %$$\lev{\omega}{c}=\{x\in\R^n:\omega(x)\leq c\}$$
 is bounded \cite[Proposition 11.12]{bauschkecombettes2017}, and Assumption~\ref{ass:omega_min} is satisfied by
   \cite[Proposition 11.15]{bauschkecombettes2017}.  {For convenience, we denote $\Delta_\omega=\omega^*-\ubar{\omega}$.}

 Moreover, in this paper we construct algorithms based on the proximal operator given by 
 $$\prox{\lambda g}{x}=\argmin_{y\in\R^n}\left\{g(y)+\frac{1}{2\lambda}\norm{x-y}^2\right\},~\lambda >0.$$
To this end, we will assume that both $g_1$ and $g_2$ are ``proximal friendly'', that is, their proximal operator can be easily computed (see \cite[Chapter 6]{beck2017} for details of such functions).

 \revise{To motivate the structure of this bilevel problem, we consider a general inverse problem in which we wish to recover a signal $x$ known to lie in a simple closed convex set $C$ (e.g., box constraints \cite{chang_phase_2016}) from observations $y = Ax + \eta$, where $\eta$ is a noise vector. For a given $\tau \ge 0$, the problem can be formulated as finding $x \in C$ such that $\norm{Ax - y} \le \tau$, or equivalently, as finding the optimal solution set of the convex composite function $\varphi$ given by
\begin{equation}\label{eq:example_phi}
\varphi(x) = \dist(Ax, \mathcal{B}(y, \tau))^2 + \delta_C(x),
\end{equation}
where $\dist(Ax, \mathcal{B}(y, \tau))^2$ is $L$-Lipschitz smooth with $L = \|A\|^2$ \cite{bednarczuk_forward-backward_2024}. When $\tau = 0$, the smooth part of $\varphi$ corresponds to standard least-squares, when $\tau > 0$ it corresponds to modeling bounded noise (e.g., \cite{fuchs_recovery_2005}). Since the problem is often ill-posed, a bilevel problem can be constructed with the convex composite outer function
\[
\omega(x) = \lambda_1 \norm{S_1 x}_2^2 + \lambda_2 \norm{S_2 x}_1,
\]
where $S_1 \in \mathbb{R}^{q_1\times n}$, $S_2 \in \mathbb{R}^{q_2 \times n}$, $q_1, q_2 \in \mathbb{N}$, $\lambda_1, \lambda_2 \ge 0$, and $\lambda_1 + \lambda_2 > 0$. Such functions are commonly used for regularization and include total variation \cite{rudin_nonlinear_1992}, total generalized variation \cite{bredies_total_2010}, and general combinations of $\ell_1$ and $\ell_2$ norms \cite{huang_traction_2019}. While in the example above the function $g_1(x)=\lambda_2\norm{S_2x}_1$ may not be proximal friendly for certain choices of $S_2$, we show that the convergence results established in this work remain valid through the use of suitable surrogate functions  (see Remark~\ref{rem:lifting}).
 }  

%More specifically, our algorithms are based on the proximal gradient (PG) method and the accelerated proximal gradient (APG) method (also known as FISTA). We will discuss these algorithms and their properties, relevant for our analysis below.

\subsection{Composite Functions and the Proximal-Gradient Methods} 
Throughout our analysis we use the  following properties
concerning smooth functions and the proximal operator, which we give here for the sake of completeness.%are useful in our convergence analysis.

The first result is the well-known descent lemma, which provides a quadratic upper bound for smooth functions \cite[Lemma 5.7]{beck2017}. The second result, called the second prox theorem {\cite[Theorem 6.39]{beck2017}}, provides a property of the proximal operator.
%and follows from optimality conditions.  
\begin{lemma}[Descent Lemma]\label{descent lemma}
Let $f:D\to \R$ be an $L_f$-smooth function over a convex set $D$. Then for any $x,y\in D$, and $L\geq L_f$
\begin{equation*}
f(y)\leq f(x)+\inner{\nabla f(x)}{y-x} +\frac{L}{2}\norm{x-y}^2.
\end{equation*}
\end{lemma}

%The next result specifies a property of the proximal operator. 
%The next result, called the second prox theorem {\cite[Theorem 6.39]{beck2017}}, provides a property of the proximal operator, and follows from optimality conditions. 
%The three-point lemma, is the Euclidean version of the well known three-point lemma, first given in \cite{Bregman1993} for  the general Bregman distance.\shim{[remove lemma 4 and change the above sentence]}

\begin{lemma}[Second Prox Theorem]\label{second prox theorem}
  Let $g:\R^n\to \bar{\R}$ be a proper, convex, and lower semicontinuous function. Then for $x,u\in \R^n$, $u = \prox{g}{x}$ if and only if 
  $$\langle x - u, y - u\rangle \leq g(y) - g(u),\quad\forall y\in \R^n.$$ 
\end{lemma}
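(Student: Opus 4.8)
The statement is the standard characterization of the proximal operator by a subgradient-type inequality, and the plan is to prove both implications directly from the definition of $\prox{g}{x}$ as the (unique) minimizer of $h(y):=g(y)+\tfrac12\norm{x-y}^2$, without invoking subdifferential calculus. First I would record that $h$ is proper, lower semicontinuous, and $1$-strongly convex (since $g$ is convex and $\tfrac12\norm{x-\cdot}^2$ is $1$-strongly convex), so it has a unique minimizer and $\prox{g}{x}$ is well defined; this is what makes the ``if and only if'' meaningful.

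For the forward direction, assume $u=\prox{g}{x}$, fix an arbitrary $y$, and test optimality of $u$ against the segment point $y_t=(1-t)u+ty$ for $t\in(0,1]$. Combining $h(u)\le h(y_t)$ with the convexity bound $g(y_t)\le(1-t)g(u)+tg(y)$ and expanding $\norm{x-y_t}^2=\norm{x-u}^2-2t\inner{x-u}{y-u}+t^2\norm{y-u}^2$ gives, after cancelling $g(u)$-terms and dividing by $t>0$, an inequality of the form $\inner{x-u}{y-u}\le g(y)-g(u)+\tfrac t2\norm{y-u}^2$. Letting $t\downarrow 0$ eliminates the quadratic remainder and yields the claimed inequality. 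The only mild subtlety here is the limit: it is legitimate because $g(y_t)$ is bounded above by the convex combination even though $g$ may be merely lsc.

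For the reverse direction, assume $\inner{x-u}{y-u}\le g(y)-g(u)$ for all $y$ and show $u$ minimizes $h$. I would substitute this inequality into $h(y)$ and use the polarization identity $\tfrac12\norm{x-y}^2=\tfrac12\norm{x-u}^2-\inner{x-u}{y-u}+\tfrac12\norm{y-u}^2$; the cross terms cancel, leaving $h(y)\ge h(u)+\tfrac12\norm{y-u}^2\ge h(u)$, so $u=\prox{g}{x}$ by uniqueness of the minimizer.

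There is no genuine obstacle; the only points requiring a little care are the well-definedness of the prox and the $t\downarrow0$ limit in the forward direction. An even shorter route would note that $u$ minimizes $h$ iff $0\in\partial h(u)=(u-x)+\partial g(u)$—the sum rule applying since the quadratic is smooth—i.e.\ $x-u\in\partial g(u)$, which is precisely the stated inequality by definition of the subdifferential; but the self-contained argument above avoids appealing to subdifferential calculus.
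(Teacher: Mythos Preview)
Your proof is correct. Note, however, that the paper does not supply its own proof of this lemma: it is stated as a known result and cited from \cite[Theorem 6.39]{beck2017}, so there is nothing to compare your argument against. Both the direct variational argument you give (testing $u$ against $y_t=(1-t)u+ty$ and letting $t\downarrow 0$) and the subdifferential shortcut you mention at the end are standard and valid derivations of this characterization.
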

% \begin{lemma}[Three-point lemma]\label{three-point}%\cite[Lemma 1]{boobdenglan2024}
%    Let $g:\R^n\to \bar{\R}$ be a proper, convex, and lower semicontinuous function such that 
%    $$x^+=\argmin_{y\in\R^n}\left\{g(y)+\frac{1}{2\lambda}\norm{x-y}^2\right\}.$$
%     Let $z\in \R^n$ be arbitrary. Then it holds that
%     \begin{equation*}
%         g(x^+) - g(z)\leq\frac{1}{2\lambda}(\|z - x\|^2-\|x^+ - x\|^2-\|z - x^+\|^2). 
%     \end{equation*}
% \end{lemma}
 
 %\shim{[Doesn't this follow directly from \cite[Theorem 10.16]{beck2017}]}
 %\shim{The following result is a direct application of \cite[Theorem 10.16]{beck2017} for the case where $f$ is convex.}
\revise{
Moreover, our convergence results relies on the following property of the proximal operator of the sum $g_2+\sigma g_1$ as $\sigma$ approaches $0$ from above.
\begin{lemma}\label{lem:prox_convergence}
Let $g_1:\R^n\rightarrow\bar{\R}$ and $g_2:\R^n\rightarrow\bar{\R}$ be proper, convex, and closed functions, with $\dom(g_1)$ being a closed set. Then, for any $x\in\R^n$ 
$$\lim_{\sigma\downarrow 0}\prox{g_2+\sigma g_1}{x}=\prox{g_2+\delta_{\dom(g_1)}}{x}$$
\end{lemma}
\begin{proof}
Let $x\in \R^n$. We first note that since $\dom(g_1)$ is nonempty, closed, and convex, the function $\delta_{\dom(g_1)}$ is proper, closed, and convex. Define,
%\begin{align*}
    $y({\sigma})=\prox{g_2+\sigma g_1}{x}$ and $
    w=\prox{g_2+\delta_{\dom(g_1)}}{x}$.
%\end{align*}
By \cite[Theorem 6.39]{beck2017} we have that $w\in\dom(g_1)$ and
\begin{align*}
    \inner{x-y({\sigma})}{w-y({\sigma})}&=g_2(w)+\sigma g_1(w)-g_2(y({\sigma}))-\sigma g_1(y({\sigma}))\\
    \inner{x-w}{y({\sigma})-w}&=g_2(y({\sigma}))-g_2(w).
\end{align*}
Summing these two equations results in
\begin{align}
\norm{w-y(\sigma)}^2\leq \sigma(g_1(w)-g_1(y(\sigma)). \label{eq:bound_dist_prox}
\end{align}
Using similar arguments, we obtain that for any $\sigma_1\geq\sigma_2>0$ the following holds
$$\norm{y(\sigma_2)-y(\sigma_1)}^2\leq (\sigma_1-\sigma_2)(g_1(y(\sigma_2))-g_1(y(\sigma_1)).$$
Thus, we can conclude that 
$g_1(y(\sigma))$ is monotonically decreasing in $\sigma\in\R_{++}$.
Therefore, \eqref{eq:bound_dist_prox} implies that
\begin{align*}
\lim_{\sigma\downarrow 0}\norm{w-y(\sigma)}^2\leq \lim_{\sigma\downarrow 0}\sigma(g_1(w)-g_1(y(\sigma))\leq \lim_{\sigma\downarrow 0}\sigma(g_1(w)-g_1(y(1))=0,
\end{align*}
where the last equality follows from $w\in\dom(g_1)$, and claim is established.  %Thus, $\lim_{\sigma\downarrow 0} y(\sigma)=w$ and the desired result is established.
\end{proof}
}

The methods analyzed in this work use the proximal gradient operator on a convex composite function $\phi=f+g$ (as in Definition~\ref{def:convex_composite}).
The proximal gradient operator of function $\phi$ for $t>0$ is given by
\begin{equation*}\label{eq:prox_grad}T^f_{g,t}(x)=\prox{t g}{x-t \nabla f(x)}.\end{equation*}
The following result provides useful bounds on the value of any solution $z$ compared to $x^+=T^f_{g,t}(x)$. This result is given by a direct application of \cite[Theorem 10.16]{beck2017} to a convex function $f$.
\begin{lemma}\label{lem:genprox}
     Let $\phi=f+g$ be a convex composite function as in Definition \ref{def:convex_composite}, and let %Let $g:\R^n\to \bar{\R}$ be a proper, convex, and lower semicontinuous function and let $f:\R^n\to \R$ be a convex $L_f$- smooth function. 
      $x^+ = T^f_{g,t}(x)$ %\prox{\lambda g}{x-\lambda \nabla f(x)}$,  
       where $t>0$ satisfies
     \begin{equation}\label{eq:suff_decrease}f(x^+)\leq f(x)+\inner{\nabla f(x)}{x^+-x}+\frac{1}{2t}\norm{x^+-x}^2.
     \end{equation}
     Then for any $z\in \R^n$,
     \begin{equation*}
      \phi(x^+) - \phi(z)\leq \frac{1}{2t}(\|z - x\|^2-\|z - x^+\|^2).%+f(z)-f(x^+). 
     \end{equation*}
 \end{lemma}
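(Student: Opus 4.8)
The plan is to derive the claimed bound by combining three ingredients: the sufficient-decrease inequality \eqref{eq:suff_decrease} that defines the admissible step size $t$, convexity of $f$, and the characterization of the prox operator given in Lemma~\ref{second prox theorem} (the second prox theorem). The strategy is the standard ``one-step progress'' estimate for proximal gradient, written so that the right-hand side telescopes when one later sums over iterates; here we only need the single-step form.

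First I would unwind the definition $x^+ = T^f_{g,t}(x) = \prox{tg}{x - t\nabla f(x)}$ and apply Lemma~\ref{second prox theorem} with the point $x - t\nabla f(x)$ in place of $x$ and $u = x^+$: for every $y\in\R^n$,
\[
\inner{x - t\nabla f(x) - x^+}{y - x^+} \leq t\bigl(g(y) - g(x^+)\bigr).
\]
Dividing by $t$ and rearranging gives a bound on $g(x^+)$ in terms of $g(y)$, $\inner{\nabla f(x)}{x^+ - y}$, and the inner product $\tfrac{1}{t}\inner{x - x^+}{y - x^+}$. Next I would invoke \eqref{eq:suff_decrease} to bound $f(x^+)$ from above by $f(x) + \inner{\nabla f(x)}{x^+ - x} + \tfrac{1}{2t}\norm{x^+ - x}^2$, and convexity of $f$ to bound $f(x) \leq f(z) + \inner{\nabla f(x)}{x - z}$. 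Adding these to the $g$-inequality evaluated at $y = z$, the gradient terms combine into $\inner{\nabla f(x)}{x^+ - z}$, which cancels against the corresponding piece coming from the prox inequality, leaving
\[
\phi(x^+) - \phi(z) \leq \tfrac{1}{2t}\norm{x^+ - x}^2 + \tfrac{1}{t}\inner{x - x^+}{z - x^+}.
\]

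Finally I would recognize the right-hand side as a perfect-square rearrangement: using the identity $\norm{x^+ - x}^2 + 2\inner{x - x^+}{z - x^+} = \norm{z - x}^2 - \norm{z - x^+}^2$, the bound collapses to $\tfrac{1}{2t}\bigl(\norm{z - x}^2 - \norm{z - x^+}^2\bigr)$, which is exactly the assertion. Since the lemma states this is a direct application of \cite[Theorem 10.16]{beck2017} to convex $f$, I would alternatively just cite that theorem and note that for convex $f$ the ``$\gamma$'' term there vanishes; the self-contained derivation above is the fallback. I do not anticipate a genuine obstacle here — the only point requiring care is bookkeeping the inner-product terms so that the $\nabla f(x)$ contributions cancel cleanly, and making sure the step-size condition \eqref{eq:suff_decrease} (rather than global $L_f$-smoothness) is what is actually used, so that the result also covers the backtracking regime.
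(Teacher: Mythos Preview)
Your proposal is correct and matches the paper exactly: the paper simply invokes \cite[Theorem~10.16]{beck2017} specialized to convex $f$, and your self-contained derivation via Lemma~\ref{second prox theorem}, convexity of $f$, and the sufficient-decrease condition \eqref{eq:suff_decrease} is precisely the standard proof of that result. One minor bookkeeping slip to fix when you write it out: the sign on $\inner{x-x^+}{z-x^+}$ should be \emph{negative} both in your displayed intermediate inequality and in the polarization identity (the two sign errors cancel, so your conclusion is unaffected).
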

 In the well-known PG algorithm \cite[Page 271]{beck2017}, the proximal gradient step is iteratively applied so that
 $x^{k+1}=T^f_{g,t_k}(x^k)$
 with $t_k$ satisfying \eqref{eq:suff_decrease} with respect to $x^k$. The parameter $t_k$ used in the proximal gradient step is referred to as the step size, and may change between iterations.
 When $f$ is $L_f$-smooth, setting $t_k={1}/{L_f}$ (constant step size) satisfies the condition in \eqref{eq:suff_decrease}. However, when $L_f$ is unknown or hard to compute, or when local properties of the function enable the use of larger step sizes, the backtracking procedure, specified in Algorithm~\ref{alg:backtracking}, can be applied to ensure $t$ satisfies \eqref{eq:suff_decrease}. Note that the step size resulting from the backtracking procedure satisfies $t\geq \min\{{\gamma}/{L_f},\bar{t}\}$ and $t\leq \bar{t}$. \revise{This backtracking procedure may require additional (possibly costly) function evaluations, and thus trades-off the opportunity to reduce the number of iteration by using larger step-sizes with increasing the per-iteration computational cost.}
 \begin{varalgorithm}{Backtracking}
  %\hrule
  \caption{Backtracking procedure $\mathcal{B}^f_g(x,{\bar{t}})$}\label{alg:backtracking}
  %\hrule
  \noindent{\bf Parameters:}  $\gamma\in[0,1]$.\\
  %\noindent{\bf Initialization:} %$t=\bar{t}$.\\
  \noindent{\bf} Find 
  $$i^*=\min\left\{i\in\N_0: \begin{array}{l}
  t=\bar{t}\gamma^i, x^+=T^f_{g,t}(x),\\
  f(x^+)\leq f(x)+\inner{\nabla f(x)}{x^+-x}+\frac{1}{2t}\norm{x^+-x}^2
  \end{array}
 \right\}$$
  %smallest $i\in \N_0$ such that for $t=\bar{t}\gamma^i$, and $x^+=T^f_{g,t}(x)$
  %$$(\phi(x)-\phi(x^+))\geq 
 %\frac{\alpha}{t}  \norm{x-x^+}^2,$$
 and return $t=\bar{t}\gamma^{i^*}$.
 %\hrule
 \end{varalgorithm} 
 %Note, that the choice of $t$ during the backtracking procedure ensures that \eqref{eq:suff_decrease} and terminates after a finite number fo steps. %in  satisfied and 
 %$$t\geq \min\{\bar{t},\frac{\gamma}{L_f}\}.$$

 Applying the PG method to a convex composite function $\phi=f+g$, with minimum $\phi^*$, either with constant or backtracking step size %step size or step size obtained from the backtracking procedure
 results in a convergence of the optimality gap $\phi(x^k)-\phi^*$ at a rate of $\mathcal{O}(1/k)$ \cite[Section 10.4]{beck2017}. 
 In Section~\ref{sec:four}, we will use the above results to analyze the PG-based algorithm for solving problem~\eqref{Bilevel}.

% \begin{comment}
%{\begin{proof}
%  Applying Lemma \ref{three-point} to $x^+ = \prox{\lambda g}{x-\lambda f(x)}$, we obtain
%\begin{eqnarray}
%    g_{\lambda}(x^+) &-& g(z)\nonumber\\
%    &\leq& \frac{1}{2\lambda}(\|z - (x - \lambda\nabla f(x))\|^2 - \|x^{+} - (x - \lambda\nabla f(x))\|^2 - \|z - x^{+}\|^2)\nonumber\\
%    &=& \frac{1}{2\lambda}(\|z - x\|^2 + 2\lambda\langle z - x, \nabla f(x)\rangle 
%    - \|x^{+} - x\|^2\nonumber\\
%    &&-2\lambda\langle x^{+} - x, \nabla f(x)\rangle - \|z - x^{+}\|^2)\nonumber\\
%    &\leq& \frac{1}{2\lambda}[\|z - x\|^2+2\lambda(f(z) - f(x)) - \|z - x^{+}\|^2 \nonumber\\
%   &&+ 2\lambda(f(x) - f(x^{+})+\frac{L_f}{2}\norm{x-x^{+}}^2)-\norm{x-x^{+}}^2]\nonumber\\
%     &=& \frac{1}{2\lambda}(\|z - x\|^2 + 2\lambda(f(z) - f(x^{+})) - \|z - x^{+}\|^2),\nonumber
%\end{eqnarray}
%where the first term in the second inequality follows from the convexity of $f$ and last term follows from the descent lemma \shim{\cite{}} and the choice of $\lambda$.   
 %\end{proof}}
 %\end{comment}

\begin{comment}
\subsection{Accelerated Proximal Gradient (APG)}
Let $\phi=f+g$ be a convex composite function for $L_f$-smooth $f$.
%Let $g:\R^n\to (-\infty, +\infty]$ be a proper, convex, and lower semicontinuous function and $f:\R^n\to \R$ be a convex Lipschitz smooth function with constant $L_f$. 
\end{comment}

The accelerated proximal gradient method, also known as the Fast Iterative Shrinkage-Thresholding Algorithm (FISTA) \cite[Pg. 291]{becteb2009} can also be used for minimization of convex composite function $\phi$. This algorithm uses two sequences $\{x^k\}_{k\in\N}$ and $\{y^k\}_{k\in\N}$ initialized so that $y^0=x^0$, and
updating steps given by
\begin{align}
            x^{k} &= T^f_{g,t_{k}}(y^{k-1})\equiv\prox{t_{k}g}{y^{k-1} - t_{k}\nabla f(y^{k-1})},\label{fistaxk}\\
            y^{k} &= x^{k} + (\frac{s_{k-1}-1}{s_{k}})(x^{k}-x^{k-1}),\nonumber
\end{align}
where $s_0=1$ and the sequence $\{s_k\}_{k\in\N}$ satisfies  {$s_{k} \leq  (1+\sqrt{1+4s^2_{k-1}})/{2}$ and $s_k\geq (k+2)/2$}. Similarly to PG, the step size $t_k$ can be chosen to be constant and equal to ${1}/{L_f}$ or obtained by the backtracking procedure in Algorithm~\ref{alg:backtracking}.
\begin{comment}
%is given by the Algorithm~\ref{alg:APG}.
\begin{algorithm}\label{alg:APG}
    \hrule
    {\bf Accelerated Proximal Gradient (APG) method}
    \hrule
    \begin{enumerate}
        \item[  ] {\bf Input}: $f,g$ and $t_k>0$. 
        \item[ ] {\bf Initialization}: $y^0 = x^0\in \R^n$ and $s_0 = 1$.
        \item[ ] {\bf General Steps}: For $k=0, 1,2,\dots$ perform the following steps:
        \begin{enumerate}
            \item[ ]{\bf Step 1}: Set 
            \begin{equation}
            x^{k+1} = prox_{t_kg}(y^k - t_k\nabla f(y^k)).\label{fistaxk}
            \end{equation}
            \item[ ]{\bf Step 2}: Set
            $$s_{k+1} = \frac{1+\sqrt{1+4s^2_k}}{2}.$$
            \item[ ]{\bf Step 3}: Compute
            $$y^{k+1} = x^{k+1} + (\frac{s_k-1}{s_{k+1}})(x^{k+1}-x^k).$$
      \end{enumerate}
    \end{enumerate}    
\hrule
\end{algorithm}
Note that in \eqref{fistaxk} the step size $t_k$ can be a constant step size $\frac{1}{L_f}$, or can be obtained by backtracking.
\end{comment}
The APG method accelerates the decrease in the optimality gap of the proximal gradient (PG) method from the rate $\mathcal{O}(1/k)$ to $\mathcal{O}(1/k^2)$. The following two results concerning the sequences generated by APG are important to our convergence analysis of the APG-based algorithm, which is detailed in Section~\ref{sec:five}.
\begin{lemma}\label{lem:property_fista_seq}
   Let $\phi = f+g$ be a convex composite function as in Definition \ref{def:convex_composite}, and let $\{x^k\}_{k\in\N}$ be the sequence generated by APG where $t_k$ is chosen either as ${1}/{L_f}$ or is obtained by \ref{alg:backtracking}. Then for any $x\in \R^n$ and $k\in\N$
   it holds that
    \begin{equation}
       t_k(\phi(x) - \phi(x^{k}))
       \geq\frac{1}{2}(\norm{x^k-x}^2-\norm{y^{k-1}-x}^2).\label{2.18a}
   \end{equation}
   \end{lemma}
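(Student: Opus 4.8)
The plan is to observe that the first APG update \eqref{fistaxk} is exactly a single proximal gradient step taken at the extrapolated point $y^{k-1}$, that is $x^k = T^f_{g,t_k}(y^{k-1})$, so that Lemma~\ref{lem:genprox} applies almost verbatim. Concretely, I would invoke Lemma~\ref{lem:genprox} with the roles of its variables chosen as $x \mapsto y^{k-1}$, $x^+ \mapsto x^k$, and $z \mapsto x$, where $x$ is the arbitrary point appearing in the statement of the present lemma.

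First I would check that $t_k$ satisfies the sufficient-decrease hypothesis \eqref{eq:suff_decrease} relative to $y^{k-1}$, namely
\[
f(x^k) \leq f(y^{k-1}) + \inner{\nabla f(y^{k-1})}{x^k - y^{k-1}} + \frac{1}{2t_k}\norm{x^k - y^{k-1}}^2 .
\]
If $t_k = 1/L_f$, this is immediate from the Descent Lemma (Lemma~\ref{descent lemma}) applied with $L = L_f$, using that $f$ is $L_f$-smooth on all of $\R^n$. If instead $t_k$ is returned by the backtracking procedure~\ref{alg:backtracking}, the inequality holds by construction, since it is precisely the acceptance criterion defining $i^*$ there.

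With the step-size condition verified, Lemma~\ref{lem:genprox} yields
\[
\phi(x^k) - \phi(x) \leq \frac{1}{2t_k}\Bigl(\norm{x - y^{k-1}}^2 - \norm{x - x^k}^2\Bigr),
\]
and multiplying through by $t_k > 0$ and rearranging gives
\[
t_k\bigl(\phi(x) - \phi(x^k)\bigr) \geq \frac{1}{2}\Bigl(\norm{x^k - x}^2 - \norm{y^{k-1} - x}^2\Bigr),
\]
which is exactly \eqref{2.18a}.

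I do not expect any genuine obstacle here: the substance of the lemma is fully contained in Lemma~\ref{lem:genprox}, and the only step demanding attention is confirming that the step size meets \eqref{eq:suff_decrease} in both the constant and the backtracking regimes, which is routine. The statement is isolated as a separate lemma because the one-step inequality \eqref{2.18a}, written in terms of $x^k$ and $y^{k-1}$, is the exact form that can later be combined with the FISTA momentum weights $\{s_k\}$ and telescoped to produce the $\mathcal{O}(1/k^2)$-type estimates used in the convergence analysis of Section~\ref{sec:five}.
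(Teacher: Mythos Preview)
Your argument is correct. The only difference from the paper's own proof is one of packaging: you route through Lemma~\ref{lem:genprox} (with the substitution $x\mapsto y^{k-1}$, $x^+\mapsto x^k$, $z\mapsto x$) and obtain the inequality in one stroke, whereas the paper re-derives the same bound from first principles---applying the Second Prox Theorem (Lemma~\ref{second prox theorem}) to the update \eqref{fistaxk}, then using convexity of $f$ together with the Descent Lemma to control $\inner{\nabla f(y^{k-1})}{x-x^k}$, and finally the polarization identity to convert the inner product into the difference of squared norms. Your shortcut is legitimate precisely because Lemma~\ref{lem:genprox} already encapsulates that computation; the paper's proof is essentially unpacking Lemma~\ref{lem:genprox} in place. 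Either way, the verification that $t_k$ satisfies \eqref{eq:suff_decrease} at $y^{k-1}$---via Lemma~\ref{descent lemma} in the constant case and by construction in the backtracking case---is exactly as you describe.
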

   \begin{proof}
   Using Lemma \ref{second prox theorem}, we have from \eqref{fistaxk} that
   \begin{eqnarray}
       \langle y^{k-1} - x^{k}, x - x^{k}\rangle
       &=&\langle y^{k-1} - t_{k}\nabla f(y^{k-1})
       - x^{k}, x - x^{k}\rangle  + t_{k}\langle \nabla f(y^{k-1}), x - x^{k}\rangle\nonumber\\
       &\leq& t_{k}(g(x) - g(x^{k})) + t_{k}\langle \nabla f(y^{k-1}), x - x^{k}\rangle.\label{2.17} 
   \end{eqnarray}
   Note that 
   \begin{eqnarray}
       \langle \nabla f(y^{k-1}), x - x^{k}\rangle&=& \langle \nabla f(y^{k-1}), x - y^{k-1}\rangle + \langle \nabla f(y^{k-1}), y^{k-1} - x^{k}\rangle\nonumber\\
       &\leq& f(x) - f(y^{k-1}) + f(y^{k-1})- f(x^{k}) + \frac{1}{2t_{k}}\|y^{k-1} - x^{k}\|^2\nonumber\\
       &=& f(x) - f(x^{k}) + \frac{1}{2t_{k}}\|y^{k-1} - x^{k}\|^2,\label{this}
   \end{eqnarray}
   where the above inequality follows from the convexity of $f$ and the choice of $t_k$ and Lemma~\ref{descent lemma}.
   Plugging \eqref{this} into \eqref{2.17} and using
   $$\langle y^{k-1} - x^{k}, x - x^{k}\rangle = \frac{1}{2}\|y^{k-1} - x^{k}\|^2+\frac{1}{2}\|x - x^k\|^2-\frac{1}{2}\|y^{k-1} - x\|^2,$$
   we obtain the desired result.
       \end{proof}

       \begin{lemma}\label{lem:seq}
       Let $\{s_k\}_{k\in \mathbb{N}_0}$ be the sequence satisfying $s_{k+1}= \frac{1+\sqrt{1+4s^2_{k}}}{2}$ and $s_0=1$.
       % for $k\in \mathbb{N}_0$. 
        Then $\frac{k+2}{2}\leq s_k\leq k+1$ for all $k\in \mathbb{N}_0$.
   \end{lemma}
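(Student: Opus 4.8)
The plan is to prove the bounds $\frac{k+2}{2} \leq s_k \leq k+1$ by induction on $k$, since the sequence is defined recursively. Both inequalities hold at the base case $k=0$: $s_0 = 1$, $\frac{0+2}{2} = 1 \leq 1$ and $1 \leq 0+1 = 1$. So the base case is tight on both sides.

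For the inductive step, I would assume $\frac{k+2}{2} \leq s_k \leq k+1$ and show the corresponding bounds for $s_{k+1} = \frac{1 + \sqrt{1 + 4s_k^2}}{2}$. The key observation is that the map $s \mapsto \frac{1 + \sqrt{1+4s^2}}{2}$ is monotonically increasing in $s \geq 0$, so I can plug the bounds on $s_k$ directly into this expression. For the upper bound: using $s_k \leq k+1$, I get $s_{k+1} \leq \frac{1 + \sqrt{1 + 4(k+1)^2}}{2}$, and it suffices to check $\sqrt{1 + 4(k+1)^2} \leq 2(k+2) - 1 = 2k+3$, i.e., $1 + 4(k+1)^2 \leq (2k+3)^2 = 4k^2 + 12k + 9$; expanding the left side gives $4k^2 + 8k + 5 \leq 4k^2 + 12k + 9$, which holds for all $k \geq 0$. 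For the lower bound: using $s_k \geq \frac{k+2}{2}$, I get $s_{k+1} \geq \frac{1 + \sqrt{1 + (k+2)^2}}{2}$, and I need this to be at least $\frac{k+3}{2}$, i.e., $\sqrt{1 + (k+2)^2} \geq k+2$, which is obvious since $1 + (k+2)^2 \geq (k+2)^2$.

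I don't anticipate any real obstacle here — this is a routine two-sided induction, and the monotonicity of the updating map is what makes plugging in the bounds legitimate. The only mild care needed is to note that the lower-bound inequality $s_k \geq \frac{k+2}{2}$ is actually equivalent to the weaker hypothesis $s_k \geq \frac{k+1}{2}$ being enough (indeed one could even get away with less), so the induction is comfortably not tight on the lower side for $k \geq 1$; but since we only need the stated bounds, the straightforward computation above suffices. An alternative, slightly slicker route for the lower bound is to observe from the recursion that $s_{k+1}^2 - s_{k+1} = s_k^2$, hence $s_{k+1}^2 = s_k^2 + s_{k+1} > s_k^2 + \frac{1}{2}$ (using $s_{k+1} \geq s_0 = 1 > \frac12$), so $s_k^2 > s_0^2 + \frac{k}{2} \geq \frac{k+2}{4}$... but this gives a weaker bound, so I would stick with the direct induction.
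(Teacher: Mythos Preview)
Your proof is correct and follows essentially the same approach as the paper: the upper bound argument is identical (induction using $1+4(k+1)^2 \leq (2k+3)^2$), while for the lower bound the paper simply cites \cite[Lemma 10.33]{beck2017} whereas you supply the direct inductive verification. Your version is thus slightly more self-contained, but there is no substantive difference in method.
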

   \begin{proof}
       The part $\frac{k+2}{2}\leq s_k$ is \cite[Lemma 10.33]{beck2017}. To prove the part $s_k\leq k+1$ we use induction. Since $s_0 \leq 1$, the claim trivially holds for $k=0$.  Assuming the claim holds for $k\leq n$, we prove that it is also holds for $k=n+1$. By definition of $s_{n+1}$,
       \begin{equation}
           s_{n+1} = \frac{1+\sqrt{1+4s^2_n}}{2}\leq %\nonumber\\
           %&\leq& 
           \frac{1+\sqrt{1+4(n+1)^2}}{2} %= \frac{1+\sqrt{4n^2+8n+5}}{2}\nonumber\\
           %&\leq& \frac{1+\sqrt{4n^2+12n+9}}{2} =
           \leq\frac{1+\sqrt{(2n+3)^2}}{2}%\nonumber\\
           %&=& 
           =n+2,\label{seq}
       \end{equation}
       where the first inequality follows from the induction assumption that $s_n\leq n+1$. Thus, by induction, we obtain from \eqref{seq} that $s_k\leq k+1$ for all $k\in \mathbb{N}_0$. Therefore, $\frac{k+2}{2}\leq s_k\leq k+1$, for $s_0=1$ and all $k\in \mathbb{N}$.
       \end{proof}

We conclude this section with a technical lemma, whose proof is relegated to the Appendix.
\begin{lemma}\label{lem:sum_bounds}
    Let $\beta\in\R_+$. Then for any $K\in\N$
    \begin{enumerate}[label=(\roman*)]
        \item \; \vspace{-1.5em}\begin{equation*}\sum_{k=1}^{K}k^{1-\beta}\leq \begin{cases}
\frac{(K+1)^{2-\beta}}{2-\beta}, & \beta\in[0,2),\\
(1 + \log (K)), & \beta=2,\\
\frac{\beta - 1}{\beta - 2}, & \beta>2,
\end{cases}\end{equation*}
%\item 
%\ade{$$\sum_{k=K+1}^{2K+1}k^{1-\beta}\leq
%  \begin{cases}
%      \frac{2K^{2-\beta}-K^{2-\beta}}{2-\beta}\leq \frac{3K^{2-\beta}}{2-\beta}& \beta\in(0,2)\\
      % (K+1)^{-1}+\log(2) &\beta=2\\
      %\frac{K^{2-\beta}(\beta-1)}{\beta-2} &\beta>2
  %\end{cases}$$
%}
\item \; \vspace{-1.5em}\begin{equation*}\sum_{k=1}^{K}{k^{1-\beta}}\geq\begin{cases}
       {\frac{K^{2-\beta}}{2}}, &{\beta\in[0,2)},\\
       %K^{2-\beta}, &\beta\in[1,2),\\
        \log (K+1), & \beta=2,%\\
        %\ade{1}{2}}, &\beta>2,
   \end{cases}\end{equation*}
\item\; \vspace{-1.5em}\begin{equation*}\sum_{k=1}^{K-1}\frac{2\log(k)}{(k+1)^3} +\frac{\log(K)}{(K+1)^2}\leq 1.\end{equation*}
\end{enumerate}
\end{lemma}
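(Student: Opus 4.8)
The plan is to prove each of the three estimates by elementary comparison of sums with integrals, together with an induction in part (iii).

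\textbf{Part (i).} The idea is to bound the sum $\sum_{k=1}^K k^{1-\beta}$ by an integral. For $\beta\in[0,2)$ the exponent $1-\beta>-1$, so $k\mapsto k^{1-\beta}$ need not be monotone decreasing, but in all cases one has $k^{1-\beta}\le\int_k^{k+1} t^{1-\beta}\,dt$ is false in general; instead I would use that for $\beta\le 1$ the function is increasing, giving $\sum_{k=1}^K k^{1-\beta}\le\int_1^{K+1} t^{1-\beta}\,dt\le \frac{(K+1)^{2-\beta}}{2-\beta}$, while for $\beta\in(1,2)$ the function is decreasing, so $\sum_{k=1}^K k^{1-\beta}\le 1+\int_1^{K} t^{1-\beta}\,dt\le 1+\frac{K^{2-\beta}-1}{2-\beta}\le\frac{(K+1)^{2-\beta}}{2-\beta}$ after checking the constant works (using $2-\beta\le 1$). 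For $\beta=2$, $\sum_{k=1}^K k^{-1}\le 1+\int_1^K t^{-1}\,dt = 1+\log K$. For $\beta>2$, the series $\sum_{k\ge 1}k^{1-\beta}=\zeta(\beta-1)$ converges and is bounded by $1+\int_1^\infty t^{1-\beta}\,dt = 1+\frac{1}{\beta-2}=\frac{\beta-1}{\beta-2}$.

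\textbf{Part (ii).} For the lower bound I would again compare with an integral in the other direction. For $\beta\in[0,2)$, split into the two monotonicity regimes: if $\beta\le 1$ the summands are increasing so $\sum_{k=1}^K k^{1-\beta}\ge\int_0^{K}t^{1-\beta}\,dt=\frac{K^{2-\beta}}{2-\beta}\ge\frac{K^{2-\beta}}{2}$ since $2-\beta\le 2$; if $\beta\in(1,2)$ the summands are decreasing so $\sum_{k=1}^K k^{1-\beta}\ge\int_1^{K+1}t^{1-\beta}\,dt=\frac{(K+1)^{2-\beta}-1}{2-\beta}$, and one checks this is at least $\frac{K^{2-\beta}}{2}$ for all $K\ge 1$ (the worst case being small $K$, where it can be verified directly, e.g.\ $K=1$ gives $1\ge 1/2$). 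For $\beta=2$, $\sum_{k=1}^K k^{-1}\ge\int_1^{K+1}t^{-1}\,dt=\log(K+1)$.

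\textbf{Part (iii).} Here I would proceed by induction on $K$. For $K=1$ the left side is just $\frac{\log 1}{4}=0\le 1$. For the inductive step, denote the left side by $S_K$; then $S_{K+1}-S_K = \frac{2\log K}{(K+1)^3}+\frac{\log(K+1)}{(K+2)^2}-\frac{\log K}{(K+1)^2}$, and it suffices to show this increment is $\le 0$ once $K$ is large enough, handling the finitely many small $K$ by direct computation. To see the increment is eventually negative, note $\frac{\log(K+1)}{(K+2)^2}-\frac{\log K}{(K+1)^2}\le \frac{\log(K+1)}{(K+1)^2}-\frac{\log K}{(K+1)^2} = \frac{\log(1+1/K)}{(K+1)^2}\le\frac{1}{K(K+1)^2}$, while the remaining term $\frac{2\log K}{(K+1)^3}$ dominates $-$ wait, it is positive, so this crude bound is not enough; instead I would compare $S_K$ directly to the tail of a convergent integral. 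The cleaner route: show $S_\infty=\sum_{k=1}^\infty \frac{2\log k}{(k+1)^3}\le 1$ and that the partial sums plus the correction term $\frac{\log K}{(K+1)^2}$ are monotone; more robustly, bound $\sum_{k=1}^{K-1}\frac{2\log k}{(k+1)^3}\le\int_1^{K}\frac{2\log t}{(t+1)^3}\,dt\le\int_1^{K}\frac{2\log t}{t^3}\,dt$ (since $(t+1)^3\ge t^3$), and the antiderivative of $\frac{2\log t}{t^3}$ is $-\frac{\log t}{t^2}-\frac{1}{2t^2}$, so this integral equals $\frac{1}{2}-\frac{\log K}{K^2}-\frac{1}{2K^2}\le\frac12$. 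It remains to check $\frac{\log K}{(K+1)^2}\le\frac12$ for all $K\ge 1$, which is clear. This gives $S_K\le 1$.

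\textbf{Main obstacle.} The only place requiring care is part (iii): the naive increment-is-negative induction does not close cleanly because $\frac{2\log K}{(K+1)^3}>0$, so the integral-comparison argument above (bounding $(t+1)^3\ge t^3$ and integrating $\frac{2\log t}{t^3}$ explicitly) is the safe path. Parts (i) and (ii) are routine integral comparisons, with the only subtlety being the non-monotonicity of $t\mapsto t^{1-\beta}$ when $\beta\in(1,2)$, which forces the split into cases $\beta\le 1$ and $\beta\in(1,2)$ and a small explicit check for small $K$.
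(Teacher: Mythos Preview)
Your parts (i) and (ii) match the paper's proof almost verbatim: the same case split on the monotonicity of $t\mapsto t^{1-\beta}$ and the same integral comparisons. One small simplification you missed in (ii) for $\beta\in[1,2)$: since $k^{1-\beta}$ is nonincreasing, the paper simply writes $\sum_{k=1}^K k^{1-\beta}\ge K\cdot K^{1-\beta}=K^{2-\beta}\ge K^{2-\beta}/2$, avoiding your integral estimate and the ``one checks'' for small $K$.

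Part (iii) is where you diverge from the paper, and your route is legitimate but has a small gap. The paper does not integrate $\frac{2\log t}{t^3}$ at all; it uses the crude bound $\log k\le k+1$ to reduce the sum to $\sum_{k=2}^{K-1}\frac{2}{(k+1)^2}+\frac{1}{K+1}$ and then bounds this by $\tfrac{2}{9}+\int_2^{K-1}\frac{2}{(x+1)^2}\,dx+\frac{1}{K+1}\le\tfrac{2}{9}+\tfrac{2}{3}<1$. Your sharper approach via the explicit antiderivative of $\frac{2\log t}{t^3}$ is fine, but the first inequality you wrote,
\[
\sum_{k=1}^{K-1}\frac{2\log k}{(k+1)^3}\le\int_1^{K}\frac{2\log t}{(t+1)^3}\,dt,
\]
is not justified: the integrand $g(t)=\frac{2\log t}{(t+1)^3}$ is increasing on $[1,t_0]$ with $t_0\approx 1.7$ and decreasing thereafter, and in fact $g(2)>\int_1^2 g(t)\,dt$. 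The easy fix is to swap your two steps: first bound $(k+1)^3\ge k^3$ \emph{in the sum}, obtaining $\sum_{k=2}^{K-1}\frac{2\log k}{k^3}$, and only then compare with $\int_1^{K-1}\frac{2\log t}{t^3}\,dt$. For $h(t)=\frac{2\log t}{t^3}$ one has $h$ decreasing on $[e^{1/3},\infty)$, so $h(k)\le\int_{k-1}^k h$ for $k\ge 3$, and the single remaining check $h(2)=\frac{\log 2}{4}\le\int_1^2 h=\frac{3}{8}-\frac{\log 2}{4}$ holds. With that reordering your argument closes cleanly at $\tfrac12+\tfrac12=1$.
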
       
\begin{comment}
\subsection{Auxiliary results}
       \begin{lemma}
           Let $\{\sigma_k\}_{k\in\N_0}$ be a nonnegative nonincreasing sequence such that $\lim_{k\rightarrow\infty}\sigma_k=0$, let $\nu>0$ and let $\{a_{k+1}\}_{k\in\N_0}$ be a nonnegative sequence satisfying
           \begin{equation}\label{eq:sequence} a_{k+1}^2\leq a_k^2+\nu a_{k+1}\sigma_k\end{equation}
           for all $k\in\N_0$. Then, for all $k\in\N_0$
           $$a_k\leq \eta\equiv\max\{2,2\nu\sigma_0,a_0\}$$
       \end{lemma}
       \begin{proof}
       We will prove this by induction. \\
       {\bf Base case:} I trivially holds that $a_0\leq \max\{2,2\nu\sigma_0,a_0\}=\eta$.\\
       {\bf Induction step:} Assume that the $a_\ell\leq \eta$ for all $\ell\leq k$, we will prove $a_{k+1}\leq \eta$.\\
       Let $K\in\N$ such that $\nu\sigma_k\leq 1$
       It follows from \eqref{eq:sequence} that
        \begin{align*}
            a_{k+1}&\leq \frac{\nu\sigma_k+\sqrt{(\nu\sigma_k)^2+4a_k^2}}{2}\leq \frac{\nu\sigma_k+\sqrt{(\nu\sigma_k)^2+(2\eta)^2}}{2}\\
            &=\frac{2\eta^2}{\sqrt{(\nu\sigma_k)^2+(2\eta)^2}-\nu\sigma_k}
        \end{align*}
       \end{proof}
\end{comment}
\section{Rates for Bilevel Problem with Composite Functions}\label{sec:rate_translate}
Our \eqref{Bilevel} model assumes that both $\varphi$ and $\omega$ are composite functions.
Our methods assume that a PG step can be computed on the function
$$\phi_k(x)\equiv\varphi(x)+\sigma_k\omega(x).$$
The function $\phi_k$ is indeed a convex composite function, since it can be written as
$$\phi_k(x)=F_k(x)+G_k(x), $$
where $F_k=f_2+\sigma_k f_1$ is Lipschitz smooth with Lipschitz constant $\sigma_kL_1+L_2$, and $G_k=g_2+\sigma_kg_1$ is a proper, convex, and lower semicontinuous function. However, computing a proximal operator on function $G_k$ may be computationally challenging even when computing it separately on  $g_1$ and $g_2$ is easy \cite{adly2019decomposition}. Therefore, in this paper we adopt the approach suggested in \cite{doronshtern2023} for the case where $G_k$ is not proximal friendly, which allows us to decompose the proximal operators of $g_1$ and $g_2$. To this end we define two surrogate functions defined on the point $w=(x,p)\in\R^{2n}$:
\begin{equation*}
\tilde{\varphi}(w)\equiv\varphi(x)+\frac{\revise{\rho}}{2}\norm{x-p}^2, ~~
\tilde{\omega}(w)\equiv\omega(p).
\end{equation*}
These two functions are also convex and composite functions according to Definition~\ref{def:convex_composite}, and thus satisfy Assumption~\ref{ass:composite}.
Moreover, the optimal value of $\tilde{\varphi}$ is $\varphi^*$, and is obtained when $p=x$. Thus, the problem
\begin{equation}\label{eq:BLO_surrogate}
    \min_{w=(x,p)\in\tilde{X}^*} \tilde{\omega}(w)
\end{equation}
for $\tilde{X}^*=\argmin_{\tilde{w}\in\R^{2n}}\tilde\varphi(\tilde{w})$
has the same optimal value and optimal solution set with respect to variable $x$ as problem \eqref{Bilevel}. Therefore, we can conclude that
\begin{equation*}
\min_{w\in\R^{2n}}\tilde{\omega}(w)=\ubar{\omega}, ~~
\min_{y\in\tilde{X}^*}\tilde{\omega}(w)=\min_{x\in X^*}\omega(x)=\omega^*.
\end{equation*}
Furthermore, when $\argmin_{p\in X^*} \omega(p)$ is bounded, and since $\tilde{X}^*\subseteq\{(x,p)\in\R^{2n}:x=p\}$ the set $\argmin_{y\in \tilde{X}^*} \tilde{\omega}(w)$ is also bounded. Thus, $\tilde{\omega}$ together with $\tilde{\varphi}$ satisfy Assumption~\ref{ass:omega_min}.
We note that while this decomposition approach has also been discussed in \cite[Remark 2.1]{latafat2024},  its implications to convergence rates with respect to $\omega$ and $\varphi$ were not analyzed.

\begin{comment}
Note that $\tilde{\omega}$ is no longer coercive with respect to $y$, but only with respect to the partial vector $z$. However, the following result still holds.
\begin{lemma}\label{lem:boundedness}
Let $\mathcal{A}$ be an algorithm to solve problem~\eqref{Bilevel} with outer function $\tilde{\omega}$ and inner function $\tilde{\varphi}$ that generates a sequence $\{y^k\}_{k\in\N}$ such that 
\begin{align*}
\lim_{k\rightarrow\infty}\tilde{\omega}(y^k)-\omega^*\leq 0\\ \lim_{k\rightarrow} \tilde{\varphi}(y^k)-\varphi^*= 0.
\end{align*}
Then the sequence $\{y^k\}_{k\in\N}$ is bounded. 
\end{lemma}
\begin{proof}
    Let $y^k=(x^k,z^k)$.
    From the convergence of the values of $\tilde{\omega}$ we can deduce that the sequence $\{z^k\}_{k\in \N}$ is bounded. Moreover, since
    ${\varphi}(y^k)\geq \varphi^*$ for all $k$, the convergence of $\tilde{\varphi}(y^k)$ to $\varphi^*$ implies that $\lim_{k\rightarrow\infty}\norm{x^k-z^k}^2\rightarrow 0$.  Thus, the sequence $\{\norm{x^k-z^k}\}_{k\in\N}$ is bounded, and the boundedness of $\{z^k\}_{k\in\N}$ implies the boundedness of $\{x^k\}_{k\in\N}$ and $\{y^k\}_{k\in\N}$.
\end{proof}
\end{comment}
The benefit of using these surrogate functions comes into play when computing a PG step on the new convex and composite function
$$\tilde{\phi}_k(w)=\tilde{F}_k(w)+\tilde{G}_k(w),$$
where
\begin{align*}
\tilde{F}_k(w)&=f_2(x)+\frac{\revise{\rho}}{2}\norm{x-p}^2+\sigma_k  f_1(p),\quad
\tilde{G}_k(w)=g_2(x)+\sigma_k g_1(p).
\end{align*}
Applying the PG operator to function $\tilde{\phi}_k(w)$ at point \revise{$w=(x,p)$} is simply obtained by
\begin{equation*}\revise{T^{\tilde{F}_k}_{\tilde{G}_k,t}(w)=\prox{t\tilde{G}_k}{w-t\nabla \tilde{F}_k(w)}}=\begin{bmatrix} \prox{tg_2}{x-t(\nabla f_2(x)+\revise{\rho}(x-p))}\\
\prox{t\sigma_kg_1}{p-t(\revise{\rho}(p-x)+\sigma_k\nabla f_1(p))}
\end{bmatrix},\end{equation*}
which involves computing separate proximal operators on $g_1$ and $g_2$. \revise{Note that the operator $T^{\tilde{F}_k}_{\tilde{G}_k,t}(w)$ depends on the choice of $\revise{\rho}$.}

To address the convergence rate guarantees that can be obtained for the original functions given 
a rate on the surrogate functions, we provide the following result.
\begin{prop}\label{prop:interpert_rate}
  Let $\mathcal{A}$ be an algorithm to solve problem~\eqref{Bilevel} with outer function $\tilde{\omega}$ and inner function $\tilde{\varphi}$ that generates a sequence $\{w^k\equiv(x^k,p^k)\}_{k\in\N}$ such that 
\begin{align*}
\tilde{\varphi}(w^k)-\varphi^*\leq \theta_{\varphi}(k),\quad\text{ and }\quad
\tilde{\omega}(w^k)-\omega^*\leq \theta_{\omega}(k),
\end{align*}
where $\theta_{\varphi}(k)$ and $\theta_{\omega}(k)$ are nonnegative decreasing functions of $k$ which go to zero as $k\rightarrow \infty$. Then,  
\begin{enumerate}[label=(\roman*)]
\item \label{prop:interpert_rate_i} For all $k\in \N$
\begin{equation}
    \varphi(x^k)-\varphi^*\leq \theta_{\varphi}(k),\;\omega(p^k)-\omega^*\leq \theta_{\omega}(k),\;\text{ and }\;
    \norm{x^k-p^k}\leq \sqrt{\frac{2}{\revise{\rho}}\theta_{\varphi}(k)}.\label{eq:translate_conv_guarantees}
\end{equation}
\item\label{prop:interpert_rate_ii} For all $k\in \N$, 
$p^k\in S_p\equiv\Lev_{\omega}(\omega^*+\theta_\omega(1))$ and
$x^k\in S_x$  where $S_x\equiv S_p+\mathcal{B}[0,\sqrt{\revise{\frac{2}{\rho}}\theta_{\varphi}(1)}]$.
\item\label{prop:interpert_rate_iii} Assume that $\omega$ is also coercive. If $g_1$ is a real-valued function, then  
\begin{equation*}
    \omega(x^k)-\omega^*\leq \theta_{\omega}(k)+\ell_1\sqrt{\frac{2}{\revise{\rho}}\theta_{\varphi}(k)},
    %\\
    %\varphi(x^k)-\varphi^*&\leq \theta_{\varphi}(k).
\end{equation*}
where $\ell_1$ is the Lipschitz constant of $\omega$ over the \revise{compact} set $S_p$.
Alternatively, if $g_2$ is a real-valued function,  
\begin{equation*}
    %\omega(z^k)-\omega^*&\leq \theta_{\omega}(k)\\
    \varphi(p^k)-\varphi^*\leq \theta_{\varphi}(k)+\ell_2\sqrt{\frac{2}{\revise{\rho}}\theta_{\varphi}(k)}.
\end{equation*}
where $\ell_2$ is the Lipschitz constant of $\varphi$ over $S_x$.
\end{enumerate}
\end{prop}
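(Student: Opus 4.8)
The plan is to unpack the definitions of $\tilde\varphi$ and $\tilde\omega$ on the point $w^k=(x^k,p^k)$ and then simply read off each inequality. For part \ref{prop:interpert_rate_i}, recall $\tilde\varphi(w^k)=\varphi(x^k)+\frac\gamma2\norm{x^k-p^k}^2$ and $\tilde\omega(w^k)=\omega(p^k)$; also $\min\tilde\omega$ over $\tilde X^*$ equals $\omega^*$ and $\min\tilde\varphi=\varphi^*$, as established just above the proposition. Since $\frac\gamma2\norm{x^k-p^k}^2\ge 0$, the hypothesis $\tilde\varphi(w^k)-\varphi^*\le\theta_\varphi(k)$ immediately gives both $\varphi(x^k)-\varphi^*\le\theta_\varphi(k)$ and $\frac\gamma2\norm{x^k-p^k}^2\le\theta_\varphi(k)$, the latter rearranging to the stated bound on $\norm{x^k-p^k}$. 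The bound $\omega(p^k)-\omega^*\le\theta_\omega(k)$ is just the second hypothesis rewritten via $\tilde\omega(w^k)=\omega(p^k)$.

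For part \ref{prop:interpert_rate_ii}, I would use monotonicity of $\theta_\omega$ and $\theta_\varphi$: since these are decreasing in $k$, we have $\omega(p^k)-\omega^*\le\theta_\omega(k)\le\theta_\omega(1)$, so $p^k\in\Lev_\omega(\omega^*+\theta_\omega(1))=S_p$ for every $k$; likewise $\norm{x^k-p^k}\le\sqrt{(2/\gamma)\theta_\varphi(k)}\le\sqrt{(2/\gamma)\theta_\varphi(1)}$ (here I would double-check whether the paper really means the radius $\frac2\gamma\sqrt{\theta_\varphi(1)}$ or $\sqrt{\frac2\gamma\theta_\varphi(1)}$ — I will take the set $S_x$ exactly as defined in the statement and only need $\norm{x^k-p^k}$ to be bounded by that radius, which follows from part \ref{prop:interpert_rate_i}). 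Writing $x^k=p^k+(x^k-p^k)$ then places $x^k\in S_p+\mathcal B[0,r]=S_x$. Note that coercivity of $\omega$ together with $S_p$ being a level set makes $S_p$ compact (by \cite[Proposition 11.12]{bauschkecombettes2017}), hence $S_x$ compact too, which is what part \ref{prop:interpert_rate_iii} needs.

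For part \ref{prop:interpert_rate_iii}, assume $\omega$ coercive. If $g_1$ is real-valued, then $\omega=f_1+g_1$ is finite everywhere and, being convex, is locally Lipschitz; restricted to the compact set $S_p$ it is globally Lipschitz with some constant $\ell_1$. Since $p^k\in S_p$ and also $x^k\in S_x\supseteq S_p$ — more precisely I need $x^k$ and $p^k$ to lie in a common compact set on which $\omega$ is $\ell_1$-Lipschitz; here one should be a little careful and take $\ell_1$ as the Lipschitz constant of $\omega$ on $S_x$ (or note $\omega$ is Lipschitz on the convex hull of $S_p\cup\{x^k\}$, which sits inside a fixed compact set) — we get $\omega(x^k)\le\omega(p^k)+\ell_1\norm{x^k-p^k}\le\omega^*+\theta_\omega(k)+\ell_1\sqrt{(2/\gamma)\theta_\varphi(k)}$, using part \ref{prop:interpert_rate_i} for both terms. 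The symmetric case $g_2$ real-valued is identical with the roles of $\varphi$ and $\omega$ swapped: $\varphi=f_2+g_2$ is then locally Lipschitz, hence $\ell_2$-Lipschitz on the compact set $S_x$, and $\varphi(p^k)\le\varphi(x^k)+\ell_2\norm{x^k-p^k}\le\varphi^*+\theta_\varphi(k)+\ell_2\sqrt{(2/\gamma)\theta_\varphi(k)}$, again invoking part \ref{prop:interpert_rate_i}.

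The only genuine subtlety — hardly an obstacle — is making the Lipschitz argument in part \ref{prop:interpert_rate_iii} rigorous: one must ensure that $x^k$ and $p^k$ lie in a single compact set on which the real-valued convex function is Lipschitz, and that the Lipschitz constant can be taken uniform in $k$. This is handled by part \ref{prop:interpert_rate_ii}, which confines the entire sequence to the fixed compact sets $S_p$ and $S_x$; everything else is a direct substitution of definitions combined with the nonnegativity of the quadratic penalty term and the monotonicity of $\theta_\varphi,\theta_\omega$.
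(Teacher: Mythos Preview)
Your proof is correct and follows essentially the same route as the paper: unpacking $\tilde\varphi(w^k)=\varphi(x^k)+\frac{\gamma}{2}\norm{x^k-p^k}^2$ and $\tilde\omega(w^k)=\omega(p^k)$, using nonnegativity of each summand for (i), monotonicity of $\theta_\varphi,\theta_\omega$ for (ii), and coercivity plus real-valuedness to get a uniform Lipschitz constant on a fixed compact set for (iii). Your caution about whether $\ell_1$ should be the Lipschitz constant on $S_p$ or on $S_x$ is well placed---the paper's statement says $S_p$ but its proof actually takes $\ell_1$ on $S_x$, exactly as you suggest is needed.
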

\begin{proof}
\begin{enumerate}[label=(\roman*)]
\item The inequalities are a direct consequence of $\tilde{\omega}(w^k)=\omega(p^k)$ and 
$$\tilde{\varphi}(w^k)-\varphi^*=(\varphi(x^k)-\varphi^*)+\frac{\revise{\rho}}{2}\norm{x^k-p^k}^2$$
with both $(\varphi(x^k)-\varphi^*)$ and $\frac{\revise{\rho}}{2}\norm{x^k-p^k}^2$ being nonnegative.
\item It  follows from the guarantees on $\omega(p^k)$ in (i) and $\theta_{\omega}(k)\leq \theta_{\omega}(1)$  that $\{p^k\}_{k\in\N}$ is a subset of $S_p\equiv\Lev_{\omega}(\omega^*+\theta_\omega(1))$. Thus, from the guarantees on $\norm{x^k-p^k}$ in (i) and $\theta_{\varphi}(k)\leq \theta_{\varphi}(1)$,  we obtain that $\{x^k\}_{k\in\N}\subseteq S_x$, where $S_x\equiv S_p+B[0,\sqrt{\revise{\frac{2}{\rho}}\theta_{\varphi}(1)}]$.
\item 
Since $\omega$ is proper, convex, lower semicontinuous, and coercive, the sets $S_p$ and $S_x$ are
convex and compact.
If $g_1$ is real-valued,  then from {the} convexity of $g_1$, $g_1$ and $\omega$ are continuous over $\R^n$ \cite[Theorem 2.21]{beck2017}.
Thus, $\omega$ is Lipschitz continuous over $S_x$, with $\ell_1$ being its Lipschitz constant. We conclude that
$$\hspace{-5pt}\omega(x^k)-\omega^*\hspace{-4pt}=\hspace{-2pt}\omega(x^k)-\omega(p^k)+\omega(p^k)-\omega^*\hspace{-4pt}\leq \hspace{-2pt}\ell_1\norm{x^k-p^k}+\theta_{\omega}(k)\hspace{-2pt}\leq\hspace{-2pt}\ell_1\sqrt{\frac{2\theta_{\varphi}(k)}{\revise{\rho}}}+\theta_{\omega}(k),$$
where the inequalities follow from the Lipschitz continuity of $\omega$ and \eqref{eq:translate_conv_guarantees}.\\
If $g_2$ is real-valued then applying the same logic, $\varphi$ is Lipschitz continuous over $S_x$ with constant $\ell_2$, and
$$\hspace{-5pt}\varphi(p^k)-\varphi^*\hspace{-4pt}=\hspace{-2pt}\varphi(p^k)-\varphi(x^k)+\varphi(x^k)-\varphi^*\hspace{-4pt}\leq \hspace{-2pt}\ell_2\norm{x^k-p^k}+\theta_{\varphi}(k)\hspace{-2pt}\leq \hspace{-2pt}\ell_2\sqrt{\frac{2\theta_{\varphi}(k)}{\revise{\rho}}}+\theta_{\varphi}(k).$$
\end{enumerate}
\end{proof}
Thus, Proposition \ref{prop:interpert_rate}\ref{prop:interpert_rate_i} provides rate of convergence guarantees for the case where both $\omega$ and $\varphi$ are both extended real-valued functions. Since at any point $w^k=(x^k,p^k)$ we might have that $x^k\notin \dom(\omega)$ and $p^k\notin\dom(\varphi)$, then it provides a rate for which the $\dist(x^k,\dom(\omega))$ and $\dist(p^k,\dom(\varphi))$ converge to $0$. 
Moreover, Proposition~\ref{prop:interpert_rate}\ref{prop:interpert_rate_iii} addresses the case where one of the functions is real-valued, and translates the convergence rates for the surrogate functions' sequences $\tilde{\varphi}(w^k)$ and $\tilde{\omega}(w^k)$ directly to a convergence rate for the values of the original functions, $\varphi$ and $\omega$, for either $\{x^k\}_{k\in\N}$ or $\{p^k\}_{k\in\N}$.

\begin{remark}\label{rem:lifting}
A similar decomposition technique may also prove useful when the outer function is of the form $\omega(\revise{S}x)$ for some matrix $S\in\R^{\ell\times n}$, for example $\omega(\revise{S}x)=\norm{\revise{S}x}_1$. This allows us to use the PG step on $\omega(p)$ with respect to $z$ in situation where the PG step for $\omega(\revise{S}x)$ with respect to $x$ cannot be easily computed. Indeed, defining
$$\tilde{\varphi}(w)=\varphi(x)+\frac{\revise{\rho}}{2}\norm{\revise{S}x-p}^2,$$
an analog of Proposition \ref{prop:interpert_rate}\ref{prop:interpert_rate_iii} for $g_1$ being a real-valued function guarantees that
\begin{equation*}\varphi(x^k)\leq \theta_\varphi(k),\quad\text{ and }\quad
\omega(Lx^k)\leq \theta_\omega(k)+\ell_1\sqrt{\frac{2}{\revise{\rho}}\theta_\varphi(k)}.
\end{equation*}
\end{remark}
\revise{\begin{remark}\label{rem:rho_influence}
The use of the surrogate functions is valid for any $\revise{\rho}>0$, but this parameter affects the Lipschitz constant of the smooth part of $\tilde{\varphi}$, increasing it to $\tilde{L}_2 = L_2 + 2\revise{\rho}$. The bounds $\theta_{\varphi}$ and $\theta_{\omega}$ are typically monotonically increasing in $\tilde{L}_2$, and in the case of our analysis in Sections~\ref{sec:four} and \ref{sec:five} are affine functions of $\tilde{L}_2$. Assume that $\theta_{\tau}(k)=\theta_{\tau}^0(k)+\theta_{\tau}^{\revise{\rho}}(k) \revise{\rho}$ for $\tau\in\{\varphi,\omega\}$, where $\theta_{\tau}^0(k), \theta_{\tau}^{\revise{\rho}}(k)$ are nonnegative decreasing and converging to $0$ with $k\rightarrow\infty$. Thus,  Proposition~\ref{prop:interpert_rate}\ref{prop:interpert_rate_i}  yields
$$\norm{x^k-p^k}\leq \sqrt{\frac{2\theta_{\varphi}^0(k)}{\revise{\rho}}+2\theta_{\varphi}^{\revise{\rho}}(k)}.$$
Therefore, the distance between $x^k-p^k$ decreases with the increase in $\revise{\rho}$, creating  a trade-off between the bound on the convergence of the function valued and the bound on this distance. This trade-off is even more evident in Proposition~\ref{prop:interpert_rate}\ref{prop:interpert_rate_iii}, where, depending on the specific forms of $\theta_\omega$ and $\theta_{\varphi}$, there may exist a value $\revise{\rho} \in (0,\infty)$ that minimizes the bound for the real-valued function.
\end{remark}}
In the following sections, we present two algorithms to solve the bi-level problem \eqref{Bilevel} based on applying the PG operator on function $\phi_k$, and derive convergence rates guarantees for these algorithms. To address the cases where these algorithms cannot directly be applied on the bi-level problem~\eqref{Bilevel}, due to the computational burden of computing the PG operator of $\phi_k$, we provide a convergence rate assuming that the algorithm is applied on the surrogate problem~\eqref{eq:BLO_surrogate}.

\section{Iteratively REgularized Proximal Gradient (IRE-PG)}\label{sec:four}
In this section, we present a PG based algorithm for solving problem \eqref{Bilevel}. This iterative algorithm applies a PG step on the iterative regularized function $\phi_k$ defined in \eqref{Tikhonov}. The full description of the method is recorded in Algorithm~\ref{alg:IRE-PG}.

\begin{varalgorithm}{IRE-PG}
    \caption{
    Iteratively REgularized Proximal Gradient}\label{alg:IRE-PG}
    %\begin{enumerate}
        %\item[  ] 
        \noindent {\bf Input}: A sequence $\{\sigma_k\}_{k\in\N}\subset\R_+$, scalar $\bar{t}>0$, %satisfying $0<\sigma_{k+1}\leq \sigma_k$, $\lim_{k\to \infty}\sigma_k=0$ and $\sum_{k=0}^{\infty}\sigma_k = +\infty$. 
        %\item[ ] {\bf Initialization}: 
        and an initial point $x^0\in \R^n$.\\
        %\item[ ] 
        \noindent {\bf General Step}: For $k=1,2,\dots$
        \begin{enumerate}
            \item[1.] Set $F_k=\sigma_kf_1 + f_2$, and $G_k=\sigma_kg_1 + g_2$. %and $t_k = \frac{1}{\sigma_kL_1 + L_2}$ 
            \item[2.] Choose $t_k$ from these options
            \begin{enumerate}[label={(\alph*)}]
                \item $t_k=\frac{1}{L_2+\sigma_kL_1}$ (constant step size).
                \item $t_k=\mathcal{B}^{F_k}_{G_k}(x^{k-1},{\bar{t}})$ defined by Algorithm~\ref{alg:backtracking} (backtracking step size).
            \end{enumerate}
            \item[3.]Compute
            %\begin{equation}%\tag{IRE-PG}
            $x^{k} = T^{F_k}_{G_k,t_k}(x^{k-1})\equiv \revise{\prox{t_kG_k}{x^{k-1} - t_k\nabla F_k(x^{k-1})}}.$%\label{3.1}
            %\end{equation}
        \end{enumerate}
    %\end{enumerate}  
    %\hrule
\end{varalgorithm}

%\begin{remark}
 If $g_1=0$ and $g_2 = \delta_C$, where $C$ is a nonempty, closed, and convex subset of $\R^n$, then \eqref{Bilevel} reduces to the bi-level problem studied in \cite{Sol}. Thus, 
 the explicit descent method \cite[Algorithm 2]{Sol} is a special case of Algorithm~\ref{alg:IRE-PG}.    
%\end{remark}

In the following {sub}section, we prove that the sequence generated by \ref{alg:IRE-PG} is bounded and all its limits points are solutions to problem~\eqref{Bilevel}. 
%In section~\ref{sec:IRE_PG_rate}, we proceed to provide convergence rate results for \ref{alg:IRE-PG} 

\subsection{Convergence analysis of IRE-PG}\label{sec:IRE_PG_converge}
\revise{Our convergence proof is heavily inspired by the convergence proof in \cite{Sol}, with the appropriate modifications for the proximal setting.}
In order to prove the properties of the sequence $\{x^k\}_{k\in\N}$ produced by \ref{alg:IRE-PG}, we first give the following auxiliary result. 
\begin{lemma}\label{lem:IRE-PG}
    Let $x^*\in \ubar{X}$,   
    %be a stepsize satisfying
    %$$F_k(x^{k+1})\leq F_k(x^k)+\inner{\nabla F_k(x^k)}{x^{k+1}-x^k}+\frac{1}{2t_k}\norm{x^{k+1}-x^k}^2 .$$
    and let the $\{x^k\}_{k\in\N_0}$ be the sequence generated by \ref{alg:IRE-PG}.
    Then for all $k\in\N$
    $$\|x^{k} - x^*\|^2\leq \|x^{k-1} - x^*\|^2 + 2\sigma_kt_k(\omega^* - \omega(x^{k})) +
    2t_k(\varphi^* - \varphi(x^{k})).$$  
\end{lemma}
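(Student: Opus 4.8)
The plan is to apply Lemma~\ref{lem:genprox} to the convex composite function $\phi_k=F_k+G_k$ with $F_k=f_2+\sigma_kf_1$ and $G_k=g_2+\sigma_kg_1$, using the fact that $\phi_k=\varphi+\sigma_k\omega$ and that the general step of \ref{alg:IRE-PG} is precisely $x^k=T^{F_k}_{G_k,t_k}(x^{k-1})$. The function $F_k$ is convex and $(L_2+\sigma_kL_1)$-smooth, being the sum of the $L_2$-smooth $f_2$ and the $\sigma_kL_1$-smooth $\sigma_kf_1$, and $G_k$ is proper, convex, and lower semicontinuous; hence $\phi_k$ is a convex composite function in the sense of Definition~\ref{def:convex_composite}.

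The first step is to check that the step size $t_k$ chosen in the algorithm satisfies the sufficient decrease condition \eqref{eq:suff_decrease} for $F_k$ at the point $x^{k-1}$, which is the hypothesis needed to invoke Lemma~\ref{lem:genprox}. For option (a), $t_k=1/(L_2+\sigma_kL_1)\leq 1/L_{F_k}$, so the Descent Lemma (Lemma~\ref{descent lemma}) applied to $F_k$ gives exactly \eqref{eq:suff_decrease}. For option (b), the backtracking procedure in Algorithm~\ref{alg:backtracking} returns a $t_k$ for which $x^k=T^{F_k}_{G_k,t_k}(x^{k-1})$ satisfies \eqref{eq:suff_decrease} by construction. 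In either case the hypothesis of Lemma~\ref{lem:genprox} holds, so applying it with $x=x^{k-1}$, $x^+=x^k$, and $z=x^*$ yields
$$\phi_k(x^k)-\phi_k(x^*)\leq\frac{1}{2t_k}\bigl(\|x^{k-1}-x^*\|^2-\|x^k-x^*\|^2\bigr).$$

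Finally I would unpack $\phi_k$ and use $x^*\in\ubar{X}$. Since $\ubar{X}=\argmin_{x\in X^*}\omega(x)\subseteq X^*$, the point $x^*$ minimizes $\varphi$, so $\varphi(x^*)=\varphi^*$, and it attains the value $\omega^*=\min_{x\in X^*}\omega(x)$, so $\omega(x^*)=\omega^*$. Substituting $\phi_k(x^k)=\varphi(x^k)+\sigma_k\omega(x^k)$ and $\phi_k(x^*)=\varphi^*+\sigma_k\omega^*$ into the inequality above, multiplying through by $2t_k>0$, and rearranging gives
$$\|x^k-x^*\|^2\leq\|x^{k-1}-x^*\|^2+2t_k\bigl(\varphi^*-\varphi(x^k)\bigr)+2\sigma_kt_k\bigl(\omega^*-\omega(x^k)\bigr),$$
which is the claimed bound. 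There is no serious obstacle in this argument; the only points requiring care are verifying the step-size condition \eqref{eq:suff_decrease} in both the constant and backtracking cases so that Lemma~\ref{lem:genprox} is applicable, and correctly using the definition of $\ubar{X}$ to replace $\varphi(x^*)$ and $\omega(x^*)$ by $\varphi^*$ and $\omega^*$.
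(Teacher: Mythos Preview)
Your proof is correct and follows exactly the same approach as the paper: apply Lemma~\ref{lem:genprox} to $\phi_k=F_k+G_k$ with $x=x^{k-1}$, $x^+=x^k$, and $z=x^*$. Your write-up is in fact more detailed than the paper's, which simply states that the result follows from this substitution.
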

\begin{proof}
 The result follows from Lemma 
 \ref{lem:genprox} by replacing $f$ and $g$ with $F_k$ and $G_k$, respectively, and setting $z:=x^*$, $x:=x^{k-1}$, and $x^+:=x^{k}$. 
 %we obtain
%\begin{eqnarray*}
 %   G_k(x^{k}) &-& G_k(x^*)\leq \frac{1}{2t_k}(\|x^* - x^{k-1}\|^2 + 2t_k(F_k(x^*) - F_k(x^{k})) - \|x^* - x^{k}\|^2).\label{3.2}
%end{eqnarray*}
%Plugging in the definition of $F_k$ and $G_k$ in the above inequality leads to 
%\begin{eqnarray*}
%   \|x^{k} - x^*\|^2&\leq& \|x^{k-1} - x^*\|^2 + %\frac{2\sigma_k}{\sigma_kL_1 + L_2}
%    2\sigma_kt_k(\omega(x^*) - \omega(x^{k})) + %\frac{2}{\sigma_kL_1 + L_2}
%    2t_k(\varphi(x^*) - \varphi(x^{k}))\label{3.3}\\
 %   &\leq& \|x^{k-1} - x^*\|^2 + %\frac{2\sigma_k}{\sigma_kL_1 + L_2}
 %   2\sigma_kt_k(\omega(x^*) - \omega(x^{k})),\label{3.4}
%\end{eqnarray*}
%where the second inequality is as a result of the fact that $\varphi^* =\varphi(X^*) \min_{x\in \R^n}\varphi(x)\leq \varphi(x^{k})$.
\end{proof}

For obtaining the convergence results is this section, we henceforth apply the following assumption on the sequence of regularizing parameters $\{\sigma_k\}_{k\in\N}$.
\begin{ass}\label{ass:sigma}
    The sequence $\{\sigma_k\}_{k\in\N}\subseteq \R_{+}$ is nonincreasing, $\lim_{k\rightarrow \infty} \sigma_k=0$ and $\sum_{k\in\N} \sigma_k=\infty$.
\end{ass}
\revise{With this assumption, we prove that the sequence generated by \ref{alg:IRE-PG} is bounded.}
\begin{lemma}\label{lem:bounded}
   Let $x^*\in \ubar{X}$, and  
    let $\{x^k\}_{k\in \mathbb{N}_0}$ be a sequence generated by \ref{alg:IRE-PG}.
    Then, there exists a constant $D$ such that $\norm{x^k-x^*}\leq D$ for all $k\in\N_0$.
\end{lemma}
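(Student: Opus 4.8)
The plan is to exploit the recursive inequality from Lemma~\ref{lem:IRE-PG}. Since $x^* \in \ubar{X} \subseteq X^*$, we have $\varphi(x^k) \geq \varphi^*$ for every $k$, so the term $2t_k(\varphi^* - \varphi(x^k))$ is nonpositive and can be dropped. This yields the cleaner recursion
\begin{equation*}
\|x^k - x^*\|^2 \leq \|x^{k-1} - x^*\|^2 + 2\sigma_k t_k(\omega^* - \omega(x^k)).
\end{equation*}
The difficulty is that $\omega^* - \omega(x^k)$ need not be sign-definite: when $\omega(x^k) < \omega^*$ the right-hand side grows. First I would split into two cases at each step. If $\omega(x^k) \geq \omega^*$, then $\|x^k - x^*\| \leq \|x^{k-1} - x^*\|$ and the distance does not increase. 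If $\omega(x^k) < \omega^*$, I need an a priori bound on how negative $\omega^* - \omega(x^k)$ can be; this is where Assumption~\ref{ass:omega_min} enters, giving $\omega(x^k) \geq \ubar{\omega}$, hence $\omega^* - \omega(x^k) \leq \omega^* - \ubar{\omega} = \Delta_\omega$. I also need a bound on $t_k$: in the constant case $t_k = 1/(L_2 + \sigma_k L_1) \leq 1/L_2$, and in the backtracking case $t_k \leq \bar{t}$; let $\bar{T}$ denote a uniform upper bound on $t_k$. Thus in the bad case,
\begin{equation*}
\|x^k - x^*\|^2 \leq \|x^{k-1} - x^*\|^2 + 2\bar{T}\Delta_\omega \sigma_k.
\end{equation*}

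The key obstacle is that $\sum_k \sigma_k = \infty$, so naively summing the bad-case increments does not give a finite bound — one cannot simply add up all the increments over all iterations. The resolution is the standard argument (going back to Solodov-type analyses) that the bad case and good case cannot both persist: I would argue that once $\|x^k - x^*\|$ exceeds a certain threshold, $x^k$ must be "far enough" from $\ubar{X}$ that $\omega(x^k) \geq \omega^*$ cannot fail in a way that lets the distance keep growing unboundedly. More concretely, I would set up a threshold argument: let $\eta > 0$ be such that $\sigma_k \leq \eta$ implies $2\bar{T}\Delta_\omega\sigma_k \leq 1$ (possible since $\sigma_k \to 0$), and let $k_0$ be the first index beyond which $\sigma_k \leq \eta$. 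For $k < k_0$ there are only finitely many steps, each increasing $\|x^k - x^*\|^2$ by at most $2\bar{T}\Delta_\omega\sigma_1$, giving a bound $D_0^2 := \|x^0 - x^*\|^2 + 2\bar{T}\Delta_\omega \sigma_1 k_0$. For $k \geq k_0$, I claim $\|x^k - x^*\|^2 \leq D_0^2 + 1$ by induction: if $\|x^{k-1} - x^*\|^2 \leq D_0^2 + 1$, then either the good case holds and the bound is preserved, or the bad case holds and $\|x^k - x^*\|^2 \leq \|x^{k-1}-x^*\|^2 + 1$ — but this would need the good case to kick in to prevent a second consecutive increase.

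The cleanest fix, which I would actually pursue, is to observe that if $\omega(x^k) < \omega^*$ then $x^k \notin X^*$ in a quantitative sense is not directly available; instead, the honest approach is: in the bad case, $\|x^k - x^*\|^2 \leq \|x^{k-1} - x^*\|^2 + 2\bar T\Delta_\omega\sigma_k$, and one shows the cumulative effect is controlled because \emph{the bad case cannot recur too often relative to the good case}. Rather than belabor this, I would structure the proof as: define $D^2 = \|x^0-x^*\|^2 + 2\bar T \Delta_\omega \sum_{k : \omega(x^k) < \omega^*}\sigma_k$ and prove the sum over bad indices is finite — this follows because on the bad steps $x^k$ lies in the sublevel set $\lev{\omega}{\omega^*}$, which (being a subset of a bounded set, or handled via the boundedness of $\ubar X$ and the structure of the problem) forces such steps to be rare; alternatively and most robustly, one telescopes only over maximal runs of bad steps and uses that each run is immediately followed by re-entry into the good region. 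I expect this telescoping-over-runs argument, combined with the finitely-many-initial-steps bound, to be the technical heart; the rest (bounding $t_k$, invoking $\Delta_\omega$) is routine. The final constant $D$ is then the maximum of the finite-prefix bound and the stabilized tail bound.
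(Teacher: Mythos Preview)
Your proposal has a genuine gap. You correctly identify the good/bad case split from Lemma~\ref{lem:IRE-PG}, but every attempt you make to control the bad steps ($\omega(x^k)<\omega^*$) fails for the same reason: you are trying to bound those iterates using information about $\omega$ alone, and that is not enough. The sublevel set $\lev{\omega}{\omega^*}$ is \emph{not} assumed bounded (Assumption~\ref{ass:omega_min} only gives boundedness of $\ubar{X}=X^*\cap\lev{\omega}{\omega^*}$, not of the full $\omega$-sublevel set), so ``$x^k\in\lev{\omega}{\omega^*}$'' buys nothing. Likewise, there is no mechanism in your argument that forces $\sum_{k\text{ bad}}\sigma_k<\infty$ or that limits how many consecutive bad steps can occur; the telescoping-over-runs idea has no teeth without such a mechanism.

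The missing ingredient is a uniform bound on $\varphi(x^k)-\varphi^*$, obtained \emph{before} any distance argument. The paper gets this by applying Lemma~\ref{lem:genprox} with $z=x=x^{k-1}$ to show that the quantity $(\varphi(x^k)-\varphi^*)+\sigma_{k+1}(\omega(x^k)-\ubar{\omega})$ is nonincreasing in $k$, hence bounded by its initial value $\nu$. Once you know $\varphi(x^k)-\varphi^*\le\nu$ for all $k$, the bad-step iterates are trapped: on a bad step both $\omega(x^k)-\omega^*<0$ and $\varphi(x^k)-\varphi^*\le\nu$, so $x^k$ lies in the $\nu$-sublevel set of $\Psi(x)=\max\{\varphi(x)-\varphi^*,\omega(x)-\omega^*\}$. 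Since $\Psi$ is proper convex lsc with $\lev{\Psi}{0}=\ubar{X}$ bounded, \emph{all} its level sets are bounded, and the bad-step iterates sit in a fixed compact set. Combining with ``distance does not increase on good steps'' gives the uniform bound by a one-line induction. Your recursion-and-increment bookkeeping can be dispensed with entirely once this level-set argument is in place.
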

\begin{proof}
We start by showing that the sequence of optimality gaps $\varphi(x^k)-\varphi^*$ is bounded for all $k\in\N$.
 Replacing $f$ and $g$ with $F_k$ and $G_k$, respectively, in Lemma \ref{lem:genprox}, and setting $z=x=x^{k-1}$ and $x^+=x^{k}$, results in 
\begin{equation}\label{eq:decrease}\phi_k(x^{k})\leq \phi_k(x^{k-1})-\frac{1}{2t_k}\norm{x^k-x^{k-1}}^2.\end{equation}
Moreover, since $\sigma_k\geq \sigma_{k+1}\geq 0$, $\omega(x^k)\geq \ubar{\omega}$, and using \eqref{eq:decrease}  we obtain that
\begin{align}
    (\varphi(x^{k})-\varphi^*)+\sigma_{k+1}(\omega(x^{k})-\ubar{\omega})
    &\leq (\varphi(x^{k})-\varphi^*)+\sigma_{k}(\omega(x^{k})-\ubar{\omega})\nonumber\\
&= \phi_{k}(x^{k})-\varphi^*-\sigma_{k}\ubar{\omega}\nonumber\\
&\leq\phi_{k}(x^{k-1})-\varphi^*-\sigma_{k}\ubar{\omega}-\frac{1}{2t_k}\norm{x^k-x^{k-1}}^2\nonumber\\
&\leq  \varphi(x^{k-1})-\varphi^*+\sigma_{k}(\omega(x^{k-1})-\ubar{\omega}).\label{eq:gap_decrease}
\end{align}
%Thus, combining \eqref{eq:decrease} and \eqref{eq:smaller_sigma} we obtain
%\begin{align}
    %\varphi(x^{k})-\varphi^*+\sigma_{k+1}(\omega(x^{k})-\ubar{\omega})&=\phi_{k+1}(x^{k})-\varphi^*-\sigma_{k+1}\ubar{\omega}\nonumber\\
    %&\leq  \phi_{k}(x^{k-1})-\varphi^* - \sigma_{k}\ubar{\omega}-\frac{1}{2t_k}\norm{x^k-x^{k-1}}^2\nonumber\\
    %&\leq  \varphi(x^{k-1})-%\varphi^*+\sigma_{k}(\omega(x^{k-1})-\ubar{\omega}).\label{eq:gap_decrease}
%\end{align}
Applying \eqref{eq:gap_decrease} recursively, and using the fact that $\varphi(x^{k})-\varphi^*\geq 0$ and $\omega(x^{k})-\ubar{\omega}\geq 0$, we obtain that
\begin{align*}
\varphi(x^{k})-\varphi^*&\leq (\varphi(x^{k})-\varphi^*)+\sigma_{k+1}(\omega(x^{k})-\ubar{\omega})
\leq (\varphi(x^{0})-\varphi^*)+\sigma_{1}(\omega(x^{0})-\ubar{\omega})\equiv \nu.
\end{align*}
Consider the function $\Psi:\R^n\rightarrow\bar{\R}$ defined as
\begin{equation*}\Psi(x)=\max\{\varphi(x)-\varphi^*,\omega(x)-\omega^*\}.\end{equation*}
By Assumption~\ref{ass:composite} and due to $\dom(g_1)\cap\dom(g_2)\neq \emptyset$, we conclude that
$\Psi$ is proper, convex, and lower semicontinuous.
Moreover, from Assumption~\ref{ass:omega_min}, we conclude that its minimum is zero, and $\Lev_{\Psi}(0)$ is bounded since
%$$,~~ \text{and}~~  
$$\Lev\nolimits_{\Psi}(0)=\argmin_{x\in\R^n}\Psi(x)=\argmin_{x\in X^*}\omega(x).$$ 
%is bounded. 
Thus, by \cite[Proposition 11.3]{bauschkecombettes2017}, all of the level sets of $\Psi$ are bounded.

To show that the sequence $\{x^k\}_{k\in\N}$ is bounded, we divide the iterations to 
$\mathcal{K}=\{k\in \N:\omega(x^*)\leq\omega(x^{k})\}$ and $\bar{\mathcal{K}}=\N \setminus\mathcal{K}$.
On one hand, if $k\in \mathcal{K}$ then it follows from Lemma \ref{lem:IRE-PG} that $\|x^{k} - x^*\|\leq \|x^{k-1} - x^*\|$. On the other hand, if  $k\in\bar{\mathcal{K}}$,
then 
$$\Psi(x^{k})=\varphi(x^{k})-\varphi^*\leq \nu,$$
where $\nu$ is defined above. Consequently, $x^{k}\in\Lev_{\Psi}(\nu)$ and thus resides in a bounded set. 
Thus, we conclude that $\{x^k\}_{k\in\N}$ is bounded, so  the desired result is obtained for $D=\max\{\dist(x^*,\Lev_{\Psi}(\nu)),\norm{x_0-x^*}\}$.
%\\
%{\bf Case AI}: In this first part, we suppose $\omega(x^*)\leq\omega(x^{k+1})$. In this case, \\ 
%{\bf Case AII}: Suppose that $\omega(x^*)> \omega(x^{k+1})$. \ade{Since $\omega$ is coercive we then have from \cite[Prop. 11.12]{bauschkecombettes2017} that there exists a constant $C>0$ such that $\|x^{k+1} - x^*\|\leq C$ where $C$ is the diameter of $\text{Lev}_\omega(\omega(x^*))=\{x\in\R^n:\omega(x)\leq \omega(x^*)\}$.}\\ Thus in either case, we obtain that $\|x^{k+1} - x^*\|\leq \max\{C, \|x^k - x^*\|\}$. This implies the boundedness of $\{\|x^k - x^*\|\}$ and consequently the boundedness of $\{x^k\}$.
\end{proof}
 
We are now ready to prove (subsequence) convergence of \ref{alg:IRE-PG} to a solution of problem \eqref{Bilevel}. %Our proof technique is simi follows the the proof of Solodov in \cite{Sol}, adjusting for the change in assumptions. 
\begin{theorem}\label{prop3.1}
%Let $x^*\in \arg\min_{x\in X^*} \omega(x)$, and 
Let $\{x^k\}_{k\in \mathbb{N}_0}$ be a sequence generated by \ref{alg:IRE-PG}, \revise{and assume additionally that $\dom(g_1)$ is closed}. Then any limit point of the sequence is a solution of problem \eqref{Bilevel}.  
\end{theorem}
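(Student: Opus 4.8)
The plan is to show that any limit point $\bar{x}$ of $\{x^k\}_{k\in\N_0}$ satisfies both $\varphi(\bar{x})=\varphi^*$ (feasibility for the bi-level problem) and $\omega(\bar{x})\le\omega^*$ (optimality). Fix $x^*\in\ubar{X}$. The starting point is the recursion from Lemma~\ref{lem:IRE-PG}, namely
\[
\|x^{k}-x^*\|^2\le\|x^{k-1}-x^*\|^2+2\sigma_kt_k(\omega^*-\omega(x^{k}))+2t_k(\varphi^*-\varphi(x^{k})),
\]
together with the fact that the step sizes $t_k$ are bounded below by a positive constant $\ubar{t}$ (constant step size: $t_k=1/(L_2+\sigma_kL_1)\ge 1/(L_2+\sigma_1L_1)$ since $\sigma_k$ is nonincreasing; backtracking: $t_k\ge\min\{\gamma/(L_2+\sigma_1L_1),\bar t\}$). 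Since $\varphi(x^k)\ge\varphi^*$ always, the last term is nonpositive, so the quantity $\|x^k-x^*\|^2$ decreases except for the controlled perturbation $2\sigma_kt_k(\omega^*-\omega(x^k))$; and by Lemma~\ref{lem:bounded} the whole sequence stays in a bounded region, so $|\omega^*-\omega(x^k)|$ is bounded by some constant $M$.

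First I would telescope the recursion to extract summability. Summing from $k=1$ to $K$ gives
\[
2\sum_{k=1}^{K}t_k(\varphi(x^{k})-\varphi^*)\le\|x^0-x^*\|^2+2\sum_{k=1}^{K}\sigma_kt_k(\omega^*-\omega(x^{k}))\le \|x^0-x^*\|^2 + 2M\bar t\sum_{k=1}^K\sigma_k,
\]
which unfortunately need not be bounded because $\sum\sigma_k=\infty$; so a cruder argument is needed for feasibility. Instead, I would argue that $\liminf_{k\to\infty}\varphi(x^k)=\varphi^*$: using $\phi_k(x^k)\le\phi_k(x^{k-1})-\frac{1}{2t_k}\|x^k-x^{k-1}\|^2$ (equation~\eqref{eq:decrease}) and the monotonicity chain~\eqref{eq:gap_decrease}, the sequence $a_k:=(\varphi(x^k)-\varphi^*)+\sigma_{k+1}(\omega(x^k)-\ubar\omega)$ is nonincreasing and bounded below by $0$, hence converges to some $a_\infty\ge0$; since $\sigma_{k+1}(\omega(x^k)-\ubar\omega)\to 0$ (as $\sigma_k\to0$ and $\omega(x^k)-\ubar\omega$ is bounded), we get $\varphi(x^k)-\varphi^*\to a_\infty$. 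To see $a_\infty=0$, suppose $a_\infty=2\epsilon>0$; then $\varphi(x^k)\ge\varphi^*+\epsilon$ for all large $k$, and plugging this into the telescoped Lemma~\ref{lem:IRE-PG} inequality gives $2\ubar t\,\epsilon\,(K-k_0)\le\|x^{k_0}-x^*\|^2+2M\bar t\sum_{k=k_0}^{K}\sigma_k$; dividing by $\sum_{k=k_0}^K\sigma_k$ (which diverges) and comparing growth rates — the left side grows at least like $(K-k_0)$ while $\sum\sigma_k$ grows sublinearly since $\sigma_k\to 0$ — forces a contradiction. Hence $\varphi(x^k)\to\varphi^*$, so every limit point $\bar x$ has $\varphi(\bar x)=\varphi^*$ by lower semicontinuity of $\varphi$ together with $\varphi(\bar x)\ge\varphi^*$, i.e. $\bar x\in X^*$.

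Next, for the outer optimality I would show $\liminf_{k\to\infty}\omega(x^k)\le\omega^*$. Suppose not: there are $\epsilon>0$ and $k_0$ with $\omega(x^k)\ge\omega^*+\epsilon$ for all $k\ge k_0$. Then for $k\ge k_0$, Lemma~\ref{lem:IRE-PG} and $\varphi(x^k)\ge\varphi^*$ give $\|x^k-x^*\|^2\le\|x^{k-1}-x^*\|^2-2\sigma_k\ubar t\,\epsilon$; summing from $k_0$ to $K$ yields $0\le\|x^{k_0}-x^*\|^2-2\ubar t\,\epsilon\sum_{k=k_0}^K\sigma_k$, contradicting $\sum\sigma_k=\infty$. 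So along a subsequence $x^{k_j}$ we have $\limsup\omega(x^{k_j})\le\omega^*$; passing to a further subsequence converging to a limit point $\bar x$ (possible by boundedness) and using lower semicontinuity of $\omega$, $\omega(\bar x)\le\liminf\omega(x^{k_j})\le\omega^*$. Combined with $\bar x\in X^*$, this gives $\bar x\in\ubar X$, i.e. $\bar x$ solves~\eqref{Bilevel}. Finally, to upgrade from "some limit point" to "any limit point": once $\varphi(x^k)\to\varphi^*$ is established, \emph{every} limit point lies in $X^*$; and for a general limit point $\bar x=\lim x^{k_j}$, I would use the fact that $\{\|x^k-x^*\|\}$ is, up to the summable-in-a-weak-sense perturbation, quasi-Fej\'er monotone toward $\ubar X$ — more precisely, apply Lemma~\ref{lem:IRE-PG} with $x^*$ replaced by an arbitrary point of $\ubar X$ and combine with the Opial-type argument: since $\liminf\omega(x^k)\le\omega^*$ and $\omega$ is l.s.c., and since the perturbation terms are controlled, a standard argument (as in Solodov~\cite{Sol}) shows the limit point must itself lie in $\ubar X$. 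The main obstacle is precisely this last step — turning the "$\liminf$" statement about $\omega$ into a statement about an \emph{arbitrary} limit point — which requires carefully exploiting both parts of Assumption~\ref{ass:sigma} ($\sigma_k\to0$ for the feasibility/continuity passage, and $\sum\sigma_k=\infty$ for ruling out persistent suboptimality) in tandem with the Fej\'er-type monotonicity; the rest is bookkeeping with the descent inequalities already in hand.
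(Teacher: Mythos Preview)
Your feasibility argument is cleaner than the paper's and in fact delivers more --- full-sequence convergence $\varphi(x^k)\to\varphi^*$ rather than the paper's windowed-minimum statement --- but one step is unjustified: you assert that $\omega(x^k)-\ubar\omega$ is bounded because $\{x^k\}$ is bounded, yet a lower semicontinuous convex function need not be bounded above on bounded subsets of its domain. This is easily repaired by running the contradiction directly on $a_k$: if $a_\infty>0$, then since $(\varphi(x^k)-\varphi^*)+\sigma_k(\omega(x^k)-\ubar\omega)\ge a_k\ge a_\infty$, Lemma~\ref{lem:IRE-PG} rearranges to $\|x^k-x^*\|^2\le\|x^{k-1}-x^*\|^2-2t_k(a_\infty-\sigma_k\Delta_\omega)$, and once $\sigma_k\Delta_\omega<a_\infty/2$ this telescopes to $-\infty$; hence $a_\infty=0$ and $\varphi(x^k)-\varphi^*\le a_k\to 0$.

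The substantive gap is the final step, which you correctly flag as the main obstacle but do not resolve. Your appeal to quasi-Fej\'er/Opial arguments does not go through here: the perturbation $2\sigma_kt_k(\omega^*-\omega(x^k))_+$ is bounded only by $2\tilde t\,\Delta_\omega\,\sigma_k$, and $\sum_k\sigma_k=\infty$ by hypothesis, so the error terms are \emph{not} summable and standard quasi-Fej\'er theory yields nothing. The paper supplies the missing idea via a case split. If $\omega(x^k)\ge\omega^*$ for all large $k$, the perturbation eventually vanishes, giving genuine Fej\'er monotonicity toward $\ubar X$; combined with your $\liminf_k\omega(x^k)\le\omega^*$ this forces whole-sequence convergence to a point of $\ubar X$. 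If instead $\omega(x^k)<\omega^*$ infinitely often, set $j_k:=\max\{j\le k:\omega(x^j)<\omega^*\}$; since $\omega(x^\ell)\ge\omega^*$ for $j_k<\ell\le k$, Lemma~\ref{lem:IRE-PG} gives $\|x^k-x^*\|\le\|x^{j_k}-x^*\|$ for every $x^*\in\ubar X$, hence $\dist(x^k,\ubar X)\le\dist(x^{j_k},\ubar X)$. Any cluster point $\hat x$ of $\{x^{j_k}\}$ lies in $X^*$ (feasibility) and satisfies $\omega(\hat x)\le\omega^*$ by lower semicontinuity, so $\hat x\in\ubar X$; this drives $\dist(x^k,\ubar X)\to 0$ along every convergent subsequence. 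This ``last time below $\omega^*$'' index trick is the ingredient your outline is missing.
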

\begin{proof}
We first show that any limit point of the sequence $\{x^k\}_{k\in\N}$ 
belongs to $X^*$. \revise{Indeed, the boundedness of $\{x^k\}_{k\in\N}$, established in Lemma~\ref{lem:bounded}, ensures that it has a limit point.} 
\begin{comment}
    By applying Lemma~\ref{lem:IRE-PG}
recursively for $k\in \mathcal{K}:=\{K+1,K+2,\ldots,2K\}$, we  have that for any $x^*\in\ubar{X}$
\begin{equation}\label{eq:conv_varphi1}
   2\sum_{k=K+1}^{2K} t_k (\varphi(x^k)-\varphi^*)\leq \norm{x^K-x^*}\revise{^2}+2\sum_{k=K+1}^{2K} \sigma_kt_k(\omega^*-\omega(x^k)).
\end{equation}
Using  $\omega(x^k)\geq \ubar{\omega}$,  $\Delta_\omega=\omega^*-\ubar{\omega}$,   and the bounds on $t_k$, \eqref{eq:conv_varphi1} transforms to
\begin{equation}\label{eq:conv_varphi2}
   2\ubar{t}K\min_{k\in\mathcal{K}} (\varphi(x^k)-\varphi^*)\leq 2\hspace{-6pt}\sum_{k=K+1}^{2K} \hspace{-6pt}\ubar{t} (\varphi(x^k)-\varphi^*)\leq \norm{x^K-x^*}\revise{^2}+2\hspace{-6pt}\sum_{{k=K+1}}^{{2K}} \hspace{-6pt}\sigma_k\tilde{t}\Delta_\omega.
\end{equation}
Assumption~\ref{ass:sigma} implies that
$$\lim_{K\rightarrow 0} \frac{1}{K}\sum_{k=K+1}^{2K}\sigma_{k}\leq\lim_{K\rightarrow 0} \sigma_{K+1}=0.$$
Thus, applying Lemma~\ref{lem:bounded} to \eqref{eq:conv_varphi2} results in 
$$\lim_{K\rightarrow \infty} \min_{k\in\mathcal{K}} (\varphi(x^k)-\varphi^*)\leq \lim_{K\rightarrow \infty} \frac{1}{K\ubar{t}}\left(\frac{D\revise{^2}}{2}+\tilde{t}\Delta_\omega\sum_{k=K+1}^{2K}\sigma_k\right)=0,$$
[This only proves liminf and not limsup as we need]
\end{comment}
\revise{
Using \eqref{eq:decrease} we obtain that
\begin{align}
\frac{1}{2t_k}\norm{x^{k}-x^{k-1}}^2&\leq \sigma_k(\omega(x^{k-1})-\omega(x^k))+\varphi(x^{k-1})-\varphi(x^k)\nonumber\\
&\leq \sigma_k(\omega(x^{k-1})-\ubar{\omega})-\sigma_{k+1}(\omega(x^k)-\ubar{\omega})+\varphi(x^{k-1})-\varphi(x^{k}).\label{eq:bound_norm}\end{align}
A lower bound on $t_k$ can be obtained by $$\ubar{t}\equiv\min_{k\in\N} t_k\geq \begin{cases}
\frac{1}{L_2+\sigma_1 L_1}, & t_k \text{ is a constant step size},\\
\min\{\frac{\gamma}{L_2+\sigma_1 L_1}
,\bar{t}\}, & t_k \text{  is a backtracking step size},
\end{cases}$$
and similarly, an upper bound on $t_k$ is given by $\tilde{t}=\max\{\frac{1}{L_2},\bar{t}\}$.
Thus, 
summing \eqref{eq:bound_norm} over $k\in \N$ and using $\bar{t}$ to bound $t_k$ we obtain that
\begin{align*}
\sum_{k\in \N} \norm{x^k-x^{k-1}}^2 \leq \tilde{t}(\sigma_1(\omega(x^0)-\ubar{\omega})+\varphi(x^0)-\varphi^*)\end{align*}
implying $\lim_{k\rightarrow \infty}\norm{x^k-x^{k-1}}=0.$
Let $\{x^{i_k}\}_{k\in\N}$ be some subsequence of $\{x^k\}_{k\in\N}$ converging to $\bar{x}$. We will show that $\varphi(\bar{x})=\varphi^*$.
For each $k\in\N$ it holds that
\begin{align}
\norm{\bar{x}-T^{f_2}_{G_{i_k},t_{i_k}}(\bar{x})}
&\leq  \norm{\bar{x}-T^{F_{i_k}}_{G_{i_k},t_{i_k}}(\bar{x})}+
\norm{T^{F_{i_k}}_{G_{i_k},t_{i_k}}(\bar{x})-T^{f_2}_{G_{i_k},t_{i_k}}(\bar{x})}\nonumber\\
&\leq \norm{\bar{x}-T^{F_{i_k}}_{G_{i_k},t_{i_k}}(\bar{x})}+\norm{\sigma_{i_k}t_{i_k}\nabla f_1(\bar{x})}\label{eq:bound_limit1}
\end{align}
where the second inequality is due to the nonexpansiveness of the prox operator and the definition of $F_{i_k}$, as well as
\begin{align}
\norm{\bar{x}-T^{F_{i_k}}_{G_{i_k},t_{i_k}}(\bar{x})}&= 
\norm{\bar{x}-T^{F_{i_k}}_{G_{i_k},t_{i_k}}(\bar{x})-(x^{i_k}-x^{i_k+1})+(x^{i_k}-x^{i_k+1})}\nonumber\\
&\leq \norm{\bar{x}-T^{F_{i_k}}_{G_{i_k},t_{i_k}}(\bar{x})-(x^{i_k}-T^{F_{i_k}}_{G_{i_k},t_{i_k}}(x^{i_k}))}+\norm{x^{i_k}-x^{i_k+1}}\nonumber\\
&\leq (2+(L_2+\sigma_{i_k}L_1)t_{i_k})\norm{\bar{x}-x^{i_k}}+\norm{x^{i_k}-x^{i_k+1}},\label{eq:bound_limit2}
\end{align}
where the first inequality follows from the triangle inequality and the definition of $x^{i_k+1}$, and the second inequality follows from the Lipschitz continuity of the gradient mapping \cite[Lemma 10.10]{beck2017}.
Thus, combining \eqref{eq:bound_limit1} and \eqref{eq:bound_limit2} and using the fact that $t_{i_k}\leq \tilde{t}$ and $\lim_{k\rightarrow \infty}\sigma_{i_k}=0$ we obtain that
%\begin{align*}
$\lim_{k\rightarrow \infty}\norm{\bar{x}-T^{f_2}_{G_{i_k},t_{i_k}}(\bar{x})}=0$. %\end{align*}
Moreover, from the monotonicity of the gradient mapping \cite[Theorem 10.9]{beck2017} and $t_{i_k}^{-1}\leq \ubar{t}^{-1}$ the above inequality implies that
\begin{align*}
    0=\lim_{k\rightarrow \infty}\norm{\bar{x}-T^{f_2}_{G_{i_k},t_{i_k}}(\bar{x})}&\geq 
\lim_{k\rightarrow \infty}\norm{\bar{x}-T^{f_2}_{G_{i_k},\ubar{t}}(\bar{x})}\\
&=\lim_{k\rightarrow \infty}\norm{\bar{x}-\prox{\ubar{t}(g_2+\delta_{\dom(g_1)})}{\bar{x}-\ubar{t}\nabla f_2(\bar{x})}},
\end{align*}
where the last equality is due to Lemma~\ref{lem:prox_convergence}.
Thus, we can conclude that
$\bar{x}\in\argmin_{x\in\R^n} \varphi(x)+\delta_{\dom(g_1)}(x)\subseteq \argmin_{x\in\R^n} \varphi(x)=X^*$ with the inclusion stemming from the assumption that the optimal solution set of \eqref{Bilevel} is nonempty.
}

To prove that any limit point of $\{x^k\}_{k\in \N}$ also belongs to $\ubar{X}$ \textit{i.e.}, is a solution to \eqref{Bilevel}, we look at two cases:\\
{\bf Case I}: There  exists $k_1\in\mathbb{N}$ such that $\omega^*\leq \omega(x^{k})$ for all $k\geq k_1$. 
We claim that $\lim\inf_{k\to \infty}\omega(x^{k}) = \omega^*$. Suppose the contrary, then  there exists  
$\tilde{\omega}$ such that 
 $\lim\inf_{k\to \infty}\omega(x^{k}) = \tilde{\omega}>\omega^*$. Taking $\delta = \tilde{\omega} - \omega^*>0$, then by definition there exists $k_2\geq k_1$ such that $\omega(x^{k})>\tilde{\omega} - \frac{\delta}{2}$ for all $k\geq k_2$, that is, $\omega(x^{k}) - \omega^*\geq \frac{\delta}{2}$. Turning to Lemma \ref{lem:IRE-PG}, for any optimal solution $x^*\in\ubar{X}$ we that for all $k\geq k_2$  
\begin{eqnarray}
  \|x^{k} - x^*\|^2&\leq& \|x^{k-1} - x^*\|^2 -t_k\delta\sigma_k%\nonumber\\
  \leq \|x^{k-1} - x^*\|^2 -{\ubar{t}\delta\sigma_k}. \nonumber%~~k\geq k_2.  \label{eq:fejer}
\end{eqnarray}
Summing the above from $k_2$ we get 
$$\ubar{t}\delta\sum_{k=k_2}^{\infty}\sigma_k\leq \|x^{k_2-1} - x^*\|^2 < +\infty,$$
which contradicts our assumption that $\{\sigma_k\}_{k\in\N}$ is not summable. %$\sum_{k=1}^{\infty}\sigma_k = \infty$. 
Therefore, we conclude that $\lim\inf_{k\rightarrow\infty} \omega(x^{k}) = \omega^*$. %Together with the boundedness of $\{x^k\}_{k\in\N}$ obtained by Lemma~\ref{lem:bounded}, we conclude that any converging subsequence $\{x^{j_k}\}_{k\in\N}$ satisfies {$\lim\inf_{k\to \infty}\omega(x^{j_k}) = \omega^*$.} 
\revise{Let  $\{x^{j_k}\}_{k\in\N}$ be a subsequence of $\{x^k\}_{k\in\N}$ converging to a point
 $x^{\dag}$.} %be the limit of $\{x^{j_k}\}_{k\in\N}$, then, 
Then, by lower semicontinuity of $\omega$, we have that
\begin{equation*}
 \omega(x^\dagger)\leq {\liminf_{k\to \infty}\omega(x^{j_k})}=\liminf_{k\to \infty}\omega(x^{k}) = \omega^*.%\label{eq:w}   
\end{equation*}
Combining this result with $x^\dag\in X^*$ it follows that $x^\dag\in\ubar{X}$.\\
\begin{comment}
Furthermore, it follows from the choice of $k_1$, the definition of $\tilde{t}$, and Lemma \ref{lem:IRE-PG}\ref{lem:IRE_PG_i} that for any optimal solution $x^*$ to \eqref{Bilevel}
$$2\tilde{t}(\varphi(x^{k}) - \varphi(x^*))\leq \|x^{k-1} - x^*\|^2 - \|x^{k} - x^*\|^2, ~~k\geq k_1$$
Summing the above inequality over $k$, we get
$$2\tilde{t}\sum_{k = k_1}^{\infty}(\varphi(x^{k}) - \varphi(x^*))\leq \|x^{k_1-1} - x^*\|^2,$$
%where the second inequality is due to the boundedness of $x^{k_1-1}$. 
We can infer from the above sum that $\lim_{k\to \infty}\varphi(x^k) = \varphi^*$. Since $\varphi$ is lower semicontinuous, we then have that
\begin{equation}
    \varphi(x^{\dagger})\leq \lim_{k\rightarrow \infty}\varphi(x^{l_{j_k}}) = \lim_{k\rightarrow \infty}\varphi(x^k) = \varphi^*.\label{eq:v}
\end{equation}
\eqref{eq:w} and \eqref{eq:v} imply that $x^{\dagger}$ is a solution to the problem \eqref{Bilevel}. Thus, replacing $x^*$ with $x^\dag$ in \eqref{eq:fejer} and from convergence of $\{x^{l_{j_k}}\}_{k\in\N}$ we can deduce that $\lim_{k\rightarrow\infty} x^k\to x^{\dagger}$.\\
\end{comment}
{\bf Case II}: For each $k\in \mathbb{N}$ there is $k_1\in \mathbb{N}$ such that $k_1\geq k$ and $\omega^*>\omega(x^{k_1})$.
%Let $\{x^{j_k}\}_{k\in\N}$ be a converging subsequence of $\{x^k\}_{k\in\N}$, converging to point $x^{\ddag}$. We note that such a subsequence must exist due to Lemma~\ref{lem:bounded}.
For each $k$, let
$j_k:=\max\{j\leq k: \omega^*>\omega(x^{j})\}.$
Then $\{j_k\}_{k\in\N}$ is monotonic nondecreasing, and $j_k\to \infty$ as $k\to \infty$. 
For any $x^*\in\ubar{X}$, then 
%\begin{equation}
%\|x^{{j_k}} - x^*\|^2\leq \|x^{{j_k}-1} - x^*\|^2 + {2t_{{j_k}}\sigma_{{j_k}}}
%\frac{\sigma_{j_{m_k}}L_1 + L_2}
%(\omega^* - \omega(x^{{j_k}})) + 2t_{j_k}
%\frac{2}{\sigma_{j_{m_k}}L_1 + L_2}
%(\varphi^* - \varphi(x^{j_k})),\; \forall k\in\N.\label{3.6}
%\end{equation}
 for all $k\in\N$ and $j_{k}+1\leq l< j_{k+1}$, $\omega(x^l)\geq \omega^*$, and by Lemma \ref{lem:IRE-PG}
\begin{equation}
\|x^{l} - x^*\|^2\leq \|x^{l-1} - x^*\|^2 + {2t_{l}\sigma_{l}}
(\omega^* - \omega(x^{l}))  \leq  \|x^{l-1} - x^*\|^2.\label{eq:no_jk}
\end{equation}
Let $\{x^{i_k}\}_{k\in\N}$ be a subsequence of $\{x^k\}_{k\in\N}$ which converges to some point $x^{\ddag}$ which must satisfy $x^{\ddag}\in X^*$, or equivalently \revise{$\lim_{k\in\N} \dist(x^{i_k},X^*)=0$}.
\revise{For every $k\in\N$, $k\geq j_k$ by definition. Therefore, either $j_k<k< j_{k+1}$ and \eqref{eq:no_jk} can be
recursively applied to $l=j_k+1,j_k+2,\ldots,k$ to obtain
$\norm{x^k-x^*}^2\leq \norm{x^{j_k}-x^*}^2$, or $k=j_k$ and $\norm{x^k-x^*}^2=\norm{x^{j_k}-x^*}^2$.}
Therefore, for all $i_k$, $\dist(x^{i_k},\ubar{X})\leq \dist(x^{j_{i_k}},\ubar{X})$.
Let \revise{$\{x^{l_{j_{i_k}}}\}_{k\in\N}$ }be a subsequence of $\{x^{j_{i_k}}\}_{k\in\N}$ that converges to some $\hat{x}$.  Then, by the proof above $\hat{x}\in X^*$, it follows from lower semicontinuity of $\omega$ and the definition of $j_k$ that
\begin{equation*}
\omega(\hat{x})\leq \liminf_{k\to \infty}\revise{\omega(x^{l_{j_{i_k}}})}\leq  \limsup_{k\to \infty}\omega(x^{j_k}) \leq \omega^*,
\end{equation*}
implying that $\hat{x}\in \ubar{X}$. Thus
\begin{equation*}
\dist(x^\ddag,\ubar{X})\hspace{-2pt}=\hspace{-2pt}\lim_{k\in \N }\dist (x^{i_k},\ubar{X})\hspace{-2pt}=\hspace{-2pt}
\lim_{k\in \N }\dist (\revise{x^{j_{i_k}}},\ubar{X})\hspace{-2pt}\leq \hspace{-2pt}\lim_{k\in \N }\dist (\revise{x^{l_{j_{i_k}}}},\ubar{X})\hspace{-2pt}=\hspace{-2pt}\dist(\hat{x},\ubar{X})\hspace{-2pt}=\hspace{-2pt}0. 
\end{equation*}
\end{proof}

%In the next section, we move to prove a rate on the convergence for the optimality gap of functions $\varphi$ and $\omega$.

\subsection{Rate of Convergence of IRE-PG}\label{sec:IRE_PG_rate}
In this subsection, we present a rate of convergence result for \ref{alg:IRE-PG}. We note that our result is given in terms of a carefully constructed ergodic sequence $\{\bar{x}^k\}_{k\in\N}$, which depends on the choice of the sequence $\{\sigma_k\}_{k\in\N}$. %, and the rates for both the inner and outer functions also depend on this choice. Specifically, 
Moreover, we show that the choice of $\sigma_k$ affects the trade-off between the rates of convergence for {the inne and outer functions}
%of convergence of the inner and outer functions 
with respect to the chosen ergodic sequence.
\begin{theorem}\label{them:conv_rate_IREPG}
Let $\{x^k\}_{k\in\N}$ be the sequence generated by \ref{alg:IRE-PG}  with $\sigma_k = k^{-\beta}$ for $k\in\N$ and $\beta\in (0, 1)$. Define $\pi_k = \sigma_kt_k$, and $\bar{x}^K=\sum_{k=1}^{K}\frac{\pi_k}{\sum_{l=1}^{K} \pi_l}x^{k}$. 
Then, the following inequalities hold for any $x^*\in \ubar{X}$  and any $K\in\N$ 
\begin{enumerate}[label=(\roman*)]
\item\;\vspace{-1.5em}\begin{equation*}
    \omega(\bar{x}^K)-\omega^*\leq \frac{\norm{x^0-x^*}^2}{2}\frac{{\alpha_1}}{K^{1-\beta}}, 
\end{equation*}
where $\alpha_1=L_1+L_2$ if $t_k$ is chosen as a constant step size, and  $\alpha_1=\gamma^{-1}L_1+\max\{\bar{t}^{-1},\gamma^{-1}L_2\}$ if $t_k$ is chosen through \ref{alg:backtracking}.
\item\;\vspace{-1.5em}
\begin{align*}\varphi(\bar{x}^K)-\varphi^*&\leq \frac{\norm{x_0-x^*}^2}{2}\frac{{\alpha_1}}{K^{1-\beta}}+\Delta_\omega\begin{cases}
        \frac{\alpha_2}{K^{\beta}(1-2\beta)}, &\beta\in (0,0.5),\\
        \frac{
        \alpha_2(1+\log(K))
        }{K^{0.5}}, &\beta=0.5,\\
        \frac{2\beta\alpha_2}{K^{1-\beta}(2\beta-1)}, &\beta\in (0.5,1),
        \end{cases}
    \end{align*}
    where $\alpha_1$ is as in (i) and $\alpha_2=1$ if $t_k$ is chosen as a constant step size, and $\alpha_2=\bar{t}L_1+\max\{1,\bar{t}L_2\}$ if $t_k$ is chosen via \ref{alg:backtracking}.
\end{enumerate}
\end{theorem}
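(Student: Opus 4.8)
Everything flows from Lemma~\ref{lem:IRE-PG}. I would apply it with an arbitrary $x^*\in\ubar{X}$ (so that $\varphi(x^*)=\varphi^*$ and $\omega(x^*)=\omega^*$) and rearrange its conclusion, using $\sigma_kt_k=\pi_k$, to the per-iteration inequality
\[
2\pi_k\bigl(\omega(x^k)-\omega^*\bigr)+2t_k\bigl(\varphi(x^k)-\varphi^*\bigr)\ \le\ \|x^{k-1}-x^*\|^2-\|x^k-x^*\|^2,\qquad k\in\N.
\]
The only two facts about the suboptimalities I need are $\varphi(x^k)-\varphi^*\ge 0$ (since $\varphi^*=\min\varphi$) and $\omega^*-\omega(x^k)\le\omega^*-\ubar{\omega}=\Delta_\omega$ (since $\omega(x^k)\ge\ubar{\omega}$); part (i) exploits the former, part (ii) the latter. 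A common preliminary for both parts is a lower bound on $\Pi_K:=\sum_{l=1}^K\pi_l$: because $\sigma_k=k^{-\beta}\le\sigma_1=1$, the constant step size obeys $t_k=1/(L_2+\sigma_kL_1)\ge 1/(L_1+L_2)$ and the backtracking step size obeys $\min\{\gamma/(L_2+\sigma_1L_1),\bar t\}\le t_k\le\bar t$, so in either case $\pi_k\ge\sigma_k/\alpha_1$ with the $\alpha_1$ of the statement, whence $\Pi_K\ge\alpha_1^{-1}\sum_{k=1}^Kk^{-\beta}\ge\alpha_1^{-1}K^{1-\beta}$ by Lemma~\ref{lem:sum_bounds}(ii).

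\textbf{Part (i).} I would sum the displayed inequality over $k=1,\dots,K$, discard the nonnegative $\varphi$-terms, and telescope the squared norms to obtain $2\sum_{k=1}^K\pi_k(\omega(x^k)-\omega^*)\le\|x^0-x^*\|^2$. Since $\bar{x}^K=\sum_k(\pi_k/\Pi_K)x^k$ is a convex combination, convexity of $\omega$ (Jensen) yields $\omega(\bar{x}^K)-\omega^*\le\|x^0-x^*\|^2/(2\Pi_K)$, and substituting the lower bound on $\Pi_K$ proves (i).

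\textbf{Part (ii).} Here discarding the $\omega$-terms is not allowed, because $\omega(x^k)$ can lie strictly below $\omega^*$, leaving $\omega^*-\omega(x^k)>0$; instead I would move them to the right-hand side, bound $\omega^*-\omega(x^k)\le\Delta_\omega$ to get $2t_k(\varphi(x^k)-\varphi^*)\le\|x^{k-1}-x^*\|^2-\|x^k-x^*\|^2+2\pi_k\Delta_\omega$, multiply by $\sigma_k$ so that $\sigma_kt_k=\pi_k$ reappears on the left, and sum over $k=1,\dots,K$. The weighted norm differences $\sum_{k=1}^K\sigma_k(\|x^{k-1}-x^*\|^2-\|x^k-x^*\|^2)$ no longer telescope, but a summation-by-parts step, using that $\{\sigma_k\}$ is nonincreasing and $\|x^k-x^*\|^2\ge 0$, bounds them by $\sigma_1\|x^0-x^*\|^2=\|x^0-x^*\|^2$. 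This gives $\sum_{k=1}^K\pi_k(\varphi(x^k)-\varphi^*)\le\tfrac12\|x^0-x^*\|^2+\Delta_\omega\sum_{k=1}^K\sigma_k\pi_k$, and Jensen then yields $\varphi(\bar{x}^K)-\varphi^*\le\|x^0-x^*\|^2/(2\Pi_K)+\Delta_\omega\sum_{k=1}^K\sigma_k\pi_k/\Pi_K$. The first term is exactly the term bounded in (i). For the second, I would bound $\sum_{k=1}^K\sigma_k\pi_k=\sum_{k=1}^K\sigma_k^2t_k$ from above (for the constant step size, comparing the $\pi_k$-weighted average of $\sigma_k$ with its $\sigma_k$-weighted average removes the step-size constants; in general use $t_k\le\tilde t:=\max\{1/L_2,\bar t\}$), which reduces the ratio to a constant times $\bigl(\sum_{k=1}^Kk^{-2\beta}\bigr)/K^{1-\beta}$. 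By Lemma~\ref{lem:sum_bounds}, $\sum_{k=1}^Kk^{-2\beta}$ is of order $K^{1-2\beta}/(1-2\beta)$ when $\beta\in(0,1/2)$, of order $\log K$ when $\beta=1/2$, and bounded by $2\beta/(2\beta-1)$ when $\beta\in(1/2,1)$; dividing by $\Pi_K\ge\alpha_1^{-1}K^{1-\beta}$ reproduces the three cases, with $\alpha_2$ absorbing the surviving step-size constants.

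\textbf{Expected main obstacle.} The delicate part is (ii): the naive route---drop the $\omega$-terms, then telescope---leaves a non-vanishing additive multiple of $\Delta_\omega$, so one is forced to first reweight the per-iteration inequality by $\sigma_k$ and then control the non-telescoping sum $\sum_k\sigma_k(\|x^{k-1}-x^*\|^2-\|x^k-x^*\|^2)$ through summation by parts and monotonicity of $\{\sigma_k\}$. The remainder is bookkeeping: splitting on whether $\beta<1/2$, $\beta=1/2$, or $\beta>1/2$ to match the estimates of Lemma~\ref{lem:sum_bounds}, and propagating the step-size bounds through both the constant and the backtracking variants to land on the precise constants $\alpha_1,\alpha_2$.
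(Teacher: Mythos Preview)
Your proposal is correct and follows essentially the same route as the paper: apply Lemma~\ref{lem:IRE-PG}, for (i) drop the nonnegative $\varphi$-term, telescope, and use Jensen together with a lower bound on $\Pi_K$; for (ii) multiply the per-iteration inequality by $\sigma_k$, handle $\sum_k\sigma_k(\|x^{k-1}-x^*\|^2-\|x^k-x^*\|^2)$ by Abel summation using the monotonicity of $\{\sigma_k\}$, bound $\omega^*-\omega(x^k)\le\Delta_\omega$, and finish with Lemma~\ref{lem:sum_bounds} split over the three ranges of $\beta$. The only nuance worth tightening is your treatment of $\alpha_2$ in the constant step-size case: the paper gets $\alpha_2=1$ not by the crude bound $t_k\le 1/L_2$ but by observing that $t_k$ is nondecreasing while $\pi_k=\sigma_k t_k$ is nonincreasing (via $s\mapsto s/(sL_1+L_2)$ increasing), so that $\sum_k\sigma_k^2 t_k\le t_K\sum_k\sigma_k^2$ and $\Pi_K\ge K\pi_K=K\sigma_K t_K$, making the factor $t_K$ cancel exactly---your ``weighted-average comparison'' remark points in the right direction but this monotonicity route is cleaner.
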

\begin{proof}
\begin{enumerate}[label=(\roman*)]
\item   
Since the result holds trivially if $\omega(\bar{x}^K) <\omega^*$, we shall focus on the case when $\omega(\bar{x}^K) \geq \omega^*$. In this case, since {$\varphi(x^k)-\varphi^*\geq 0$ and $\omega$ is convex,} using Lemma \ref{lem:IRE-PG}, we obtain that
    \begin{eqnarray}        \left(\sum_{l=1}^{K}2\pi_{l}\right)(\omega(\bar{x}^K) - \omega^*)&\leq&\sum_{k=1}^{K}2\pi_k(\omega(x^{k}) - \omega^*)\nonumber\\
        &\leq& \sum_{k=1}^{K}(\|x^{k-1} - x^*\|^2 - \|x^{k} - x^*\|^2)\leq\|x^0 - x^*\|^2.\label{3.7}
    \end{eqnarray}
    %where the first inequality is due to convexity of $\omega$. 
    Observe that by definition of the step size, and since $\sigma_k\leq 1$ we have
    \begin{equation*}
        t_k\geq \ubar{t}_k=
        \begin{cases}
        \frac{1}{L_1+L_2} & t_k \text{ is a constant step size}\\
         \min\{\bar{t},\frac{\gamma}{ L_1+L_2}\} & t_k \text{ is a backtracking step size}
\end{cases}
\end{equation*} 
Furthermore,
    the derivative of the function $s\mapsto\frac{s}{sL_1+L_2}$ for $s>0$ is given by $\frac{L_2}{(sL_1+L_2)^2}>0$ and so it is increasing. Therefore, $\pi_k=t_k\sigma_k\geq \ubar{t}_k\sigma_k=\ubar{\pi}_k$
    where $\ubar{\pi}_k$ is nonincreasing in $k$, and so $\pi_k\geq \pi_K$ for all $k\in[K]$, and
\begin{align}\frac{1}{\sum_{l=1}^K \pi_l}&\leq \frac{1}{ K\ubar{\pi}_K}\leq \frac{1}{K\sigma_K\ubar{t}_K}\nonumber\\
    &\leq \begin{cases}
        \frac{L_1+L_2}{K^{1-\beta}}, & t_k \text{ is a constant step size,}\\
        \frac{\max\{\bar{t}^{-1},\gamma^{-1}(L_1+L_2)\}}{K^{1-\beta}}, & t_k \text{ is a backtracking step size,}
    \end{cases}\label{eq:bound_on_sum_pi}
    \end{align}
    Applying \eqref{eq:bound_on_sum_pi} to \eqref{3.7}, we obtain the desired result.
    %Applying this bound to \eqref{3.7}, we obtain
%\begin{equation*}2K\ubar{\pi}_K (\omega(\bar{x}^K)-\omega^*)\leq \sum_{l=1}^{K}2\pi_l(\omega(\bar{x}^K) - \omega^*) \leq  \norm{x^0-x^*}^2.
%\end{equation*}
%Therefore, from the above inequality we get 
%that if $t_k$ is chosen to be the constant step size, then
\item %Since $\omega$ is coercive (as shown in Theorem \ref{prop3.1}, Case AII), using \cite[Theorem 2.14]{beck2017} it holds that $\underline{\omega} := \min\{\omega(x)|x\in \R^n\}$ exists. 
   Applying Lemma \ref{lem:IRE-PG} to bound $\pi_k(\varphi(x^k)-\varphi^*)$, we get
    \begin{eqnarray}
    %2K\ubar{\pi}_K (\varphi(\bar{x}^K)-\varphi^*)&\leq &
    \left(\sum_{k=1}^K2{\pi}_k\right)(\varphi(\bar{x}^K)-\varphi^*)
    &\leq& 2\sum_{k=1}^{K}\pi_k(\varphi(x^{k})-\varphi^*)
    \nonumber\\
    &\leq& 
    \sigma_1\norm{x^0-x^*}^2+\sum_{k=1}^{K}(\sigma_{k}-\sigma_{k-1})\norm{x^{k}-x^*}^2\nonumber\\
    &&-\sigma_{K}\norm{x^{K}-x^*}^2+
    \sum_{k=1}^{K}2\sigma_k\pi_k(\omega^* - \omega(x^{k+1}))\nonumber\\
    &\leq& \norm{x^0-x^*}^2+2\Delta_\omega\sum_{k=1}^{K}\sigma_k\pi_k%\frac{2\sigma_k^2}{\sigma_k L_1+L_2}
    ,\label{3.8}
    \end{eqnarray}
    where the first inequality follows from the convexity of $\varphi$, the \revise{second} inequality follows from the definition of $\pi_k$, and the last inequality is due to the fact that $\sigma_k$ is decreasing in $k$, and $\omega^*-\omega(x^k)\geq \Delta_\omega$. 
    % and the definition of $\ubar{\omega}$.
    Moreover, 
    defining $$\tilde{t}_k=\begin{cases}
    \frac{1}{\sigma_kL_1+L_2}, & t_k \text{ is a constant step size},\\
    \bar{t}, & t_k \text{ is a backtracking step size},
    \end{cases}$$
    we have that $\tilde{t}_k$ is nondecreasing in $k$, and $t_k\leq \tilde{t}_k$, which implies \begin{equation}
        \label{3.12}
        \sum_{k=1}^K\sigma_k\pi_k=\sum_{k=1}^K\sigma_k^2t_k\leq \tilde{t}_K\sum_{k=1}^K \sigma_k^2.
    \end{equation}
    Plugging \eqref{3.12} into \eqref{3.8} we obtain
    \begin{align}
\varphi(\bar{x}^K)-\varphi^*&\leq \frac{1}{\sum_{k=1}^K2{\pi}_k}
    \left(\norm{x^0-x^*}^2+{2\Delta_\omega\tilde{t}_K}\sum_{k=1}^K\sigma^2_k\right)\nonumber\\
    &\leq \frac{\norm{x^0-x^*}^2}{2}\left(\frac{\alpha_1}{K^{1-\beta}}\right)+\Delta_\omega\left(\frac{\alpha_2}{K^{1-\beta}}\right)\sum_{k=1}^K\sigma_k^2\label{eq:ub_RIPG1},
    \end{align}
    where the last inequality follows from the definition of $\tilde{t}_K$, \eqref{eq:bound_on_sum_pi},
    %the fact that for $t_k$ chosen as constant step size 
    %$\frac{\tilde{t}_K}{\ubar{t}_KK\sigma_K}=\frac{1}{K^{1-\beta}},$
    %and for $t_K$ chosen through \ref{alg:backtracking}
    %$$\frac{\tilde{t}_K}{\ubar{t}_KK\sigma_K}\leq \max\left\{\frac{1}{K\sigma_K},\frac{\bar{t}(\sigma_KL_1+L_2)}{K\sigma_K}\right\}{\leq}\frac{\bar{t}L_1}{K}+\frac{\max\{1,L_2\bar{t}\}}{K^{1-\beta}},$$
    and the definition of $\alpha_1,\alpha_2,\alpha_3,$ and $\alpha_4$.
   Bounding the sum of  $\sigma^2_k$ we obtain \begin{equation*}\sum_{k=1}^{K}{\sigma_k^2}\leq \sum_{k=1}^{K} k^{-2\beta}\leq (1+\int_{k=1}^{K} x^{-2\beta}dx)\leq \begin{cases}
    \frac{K^{1-2\beta}}{1-2\beta}, & \beta\in(0,0.5),\\
    1+\log(K), &\beta=0.5,\\
   \frac{2\beta}{2\beta-1}, & \beta\in(0.5,1),
    \end{cases}%\label{3.11}
    \end{equation*}
    which plugged into \eqref{eq:ub_RIPG1} results in the desired expression. 
\end{enumerate}
\end{proof}
\remark\label{rem:thm_IRE_conv}{Theorem~\ref{them:conv_rate_IREPG} does not require the boundedness {of $\ubar{X}$} defined in Assumption~\ref{ass:omega_min}. Thus, the performance of the ergodic sequence is guaranteed even when we cannot prove the limit point convergence as in Theorem~\ref{prop3.1}.
\revise{Moreover, revisiting the function $\Psi(x)=\max\{\varphi(x)-\varphi^*,\omega(x)-\omega^*\}\geq 0$ defined in the proof of Lemma~\ref{lem:bounded}, we recall that it is proper, closed, and convex, and additionally, under Assumption~\ref{ass:omega_min} has bounded level sets. 
Thus, Theorem~\ref{them:conv_rate_IREPG} implies that $\{\bar{x}^k\}_{k\in\N}$ reside in some compact level set of $\Psi$, and thus has limit points which also belong to this set. Specifically, any limit point $x^{\dag}$ of the sequence satisfies $\Psi(x^{\dag})=0$ and is thus a solution to the bilevel problem.
} 
}

We note that the rate results obtained in Theorem~\ref{them:conv_rate_IREPG} imply that for every $\beta\in(0,0.5)$ the convergence rates of the inne and outer functions are given by $\mathcal{O}(k^{-\beta})$ and $\mathcal{O}(k^{{-(1-\beta)}})$, respectively, that is, the outer function converges faster than the inner function. Notice that this is complementary to the rate result obtained by Bi-SG \cite{merchavshoham2023}, which shows that for any $\beta\in(0,0.5)$ the rates for the inne and outer functions are given by $\mathcal{O}(k^{-(1-\beta)})$ and $\mathcal{O}(k^{-\beta})$, respectively.
%{\it i.e.}, the inner function converges faster than the outer function.
This result may also be due to how the convergence is measured. Indeed, while we measure the convergence with respect to the ergodic sequence, in \cite{merchavshoham2023} the convergence is measured with respect to the $x^{K^*}$ such that
$K^*\in \argmin_{k\in\{K+1,K+2,\ldots 2K\}} \omega(x^k)$. Therefore, we also present the analysis for a similar convergence measure.
\begin{theorem}\label{thm:conv_rate_IREPG2}
    Let $\{x^k\}_{k\in\N}$ be the sequence generated by \ref{alg:IRE-PG} with $\sigma_k = k^{-\beta}$ for some $\beta\in (0, 1)$. Let $x^*\in\ubar{X}$ and $D$ be the bound on the sequence $\{\norm{x^k-x^*}
    \}_{k\in \N_0}$ defined in Lemma~\ref{lem:bounded}.
    Then, for any $K\in\N$, $\mathcal{K}=\{K+1,K+2,\ldots,2K\}$, and $$K^*:=\argmin_{k\in\mathcal{K}}\bigg\{t_k(\varphi(x^{k}) - \varphi^*)+t_k\sigma_{k}(\omega(x^k) - \omega^*)\bigg\}$$    
    the following inequalities hold
    \begin{align*}\omega(x^{K^*}) - \omega^*&\leq \frac{D^2\alpha_1}{K^{1-\beta}},%\\
    \quad&\varphi(x^{K^*}) - \varphi^*\leq
    \frac{D^2\alpha_2}{2K}+\frac{\Delta_\omega}{K^{\beta}},
\end{align*} where $\alpha_1=L_1+L_2$, and $\alpha_2=L_1+L_2$ if $t_k$ is chosen as a constant step size, and  $\alpha_1=\max\{\bar{t}^{-1},\gamma^{-1}(L_1+L_2)\}$, and $\alpha_2=\max\{\gamma^{-1}(L_1+L_2),\bar{t}^{-1}\}$  if $t_k$ is chosen through \ref{alg:backtracking}.
\end{theorem}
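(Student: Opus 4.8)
The plan is to derive the bound directly from Lemma~\ref{lem:IRE-PG} summed over the block $\mathcal{K}=\{K+1,\ldots,2K\}$, exploiting that $K^*$ achieves the minimum of the relevant per-iteration quantity over this block. First I would write Lemma~\ref{lem:IRE-PG} as
\[
2\sigma_k t_k(\omega(x^k)-\omega^*)+2t_k(\varphi(x^k)-\varphi^*)\leq \|x^{k-1}-x^*\|^2-\|x^k-x^*\|^2,
\]
and sum this telescoping inequality over $k\in\mathcal{K}$, so that the right-hand side collapses to $\|x^K-x^*\|^2-\|x^{2K}-x^*\|^2\leq D^2$ by Lemma~\ref{lem:bounded}. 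The left-hand side is a sum over $\mathcal{K}$ of the quantity $q_k:=2\sigma_k t_k(\omega(x^k)-\omega^*)+2t_k(\varphi(x^k)-\varphi^*)$; but the definition of $K^*$ (essentially the minimizer of $q_k/2$, up to the factor of $2$ on the first term and the grouping of constants) forces $|\mathcal{K}|\cdot q_{K^*}\leq \sum_{k\in\mathcal{K}} q_k\leq D^2$, i.e. $q_{K^*}\leq D^2/K$. I should double-check that the precise expression inside the $\argmin$ matches the summand up to a uniform constant — if the factor-$2$ discrepancy matters, one simply notes $q_k\geq 2\cdot(\text{summand})$ when the summand has coefficient~$1$ on the $\varphi$-term, or adjusts constants accordingly; this is bookkeeping, not substance.

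From $q_{K^*}\leq D^2/K$, both bounds follow by dropping one nonnegative term at a time. For the outer bound: since $\varphi(x^{K^*})-\varphi^*\geq 0$, we get $2\sigma_{K^*}t_{K^*}(\omega(x^{K^*})-\omega^*)\leq D^2/K$, hence $\omega(x^{K^*})-\omega^*\leq D^2/(2K\sigma_{K^*}t_{K^*})$. Now $\sigma_{K^*}=(K^*)^{-\beta}\geq (2K)^{-\beta}$ and $t_{K^*}\geq\ubar t$, where $\ubar t\geq 1/(L_1+L_2)$ in the constant-step case (using $\sigma_k\le1$) and $\ubar t\geq\min\{\bar t,\gamma/(L_1+L_2)\}$ in the backtracking case; combining gives $\omega(x^{K^*})-\omega^*\leq \tfrac{D^2}{2K}\cdot\tfrac{(2K)^\beta}{\ubar t}=\tfrac{2^{\beta-1}D^2}{\ubar t\, K^{1-\beta}}$, which after absorbing the $2^{\beta-1}\le1$ factor into $\alpha_1$ yields $\omega(x^{K^*})-\omega^*\le D^2\alpha_1/K^{1-\beta}$ with $\alpha_1$ as stated. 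For the inner bound: since $\omega(x^{K^*})-\omega^*\geq 0$ as well (because $K^*\in\mathcal{K}$ and one may restrict, as in the proof of Theorem~\ref{prop3.1}, to the regime $\omega(x^k)\geq\omega^*$, or simply observe the result is otherwise used through the quantity $q_k$ which is what the $\argmin$ controls), we get $2t_{K^*}(\varphi(x^{K^*})-\varphi^*)\leq D^2/K$, and additionally from the telescoped sum we can peel off the $\omega$-contribution: more precisely, $\varphi(x^{K^*})-\varphi^*\leq \tfrac{D^2}{2K t_{K^*}}$ and the extra $\Delta_\omega/K^\beta$ term comes from a separate estimate bounding $\sigma_{K^*}(\omega(x^{K^*})-\omega^*)$ using $\omega(x^{K^*})-\omega^*\leq \omega^*-\ubar\omega=\Delta_\omega$ is \emph{not} directly available; instead I would revisit \eqref{eq:conv_varphi1}--\eqref{eq:conv_varphi2} style manipulation: start from $2t_k(\varphi(x^k)-\varphi^*)\leq \|x^{k-1}-x^*\|^2-\|x^k-x^*\|^2+2\sigma_k t_k(\omega^*-\omega(x^k))\leq\|x^{k-1}-x^*\|^2-\|x^k-x^*\|^2+2\sigma_k t_k\Delta_\omega$, sum over $\mathcal{K}$, use $K^*$ as the minimizer, and get $2K t_{K^*}(\varphi(x^{K^*})-\varphi^*)\leq D^2+2\Delta_\omega\sum_{k\in\mathcal K}\sigma_k t_k\leq D^2+2\Delta_\omega \tilde t\sum_{k=K+1}^{2K}k^{-\beta}$; dividing and bounding $t_{K^*}\geq\ubar t$ and $\sum_{k=K+1}^{2K}k^{-\beta}\leq K\cdot K^{-\beta}=K^{1-\beta}$ gives $\varphi(x^{K^*})-\varphi^*\leq \tfrac{D^2}{2\ubar t K}+\tfrac{\Delta_\omega\tilde t}{\ubar t K^\beta}$, matching the claimed form after renaming constants into $\alpha_2$.

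The main obstacle I anticipate is purely bookkeeping: reconciling the exact form of the quantity defining $K^*$ (which carries an asymmetric factor of $2$ between the $\varphi$ and $\omega$ terms) with the two clean estimates one wants to extract, and tracking the step-size bounds $\ubar t,\tilde t$ carefully enough that the constants land exactly on $\alpha_1=L_1+L_2$ (resp.\ $\max\{\bar t^{-1},\gamma^{-1}(L_1+L_2)\}$) and similarly for $\alpha_2$ — in particular making sure the $2^{\beta-1}$ and $2^{-\beta}$ type factors coming from $K^*\in\{K+1,\ldots,2K\}$ are absorbed correctly (they are all $\leq 1$ or harmlessly bounded). No genuinely new idea beyond the telescoping-plus-minimizer argument already used in Theorem~\ref{prop3.1} is needed; the novelty is only that we keep the constants explicit and quantify the block rather than passing to a limit.
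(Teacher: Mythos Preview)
Your overall framework---telescope Lemma~\ref{lem:IRE-PG} over the block $\mathcal{K}$ and exploit that $K^*$ achieves the minimum---is exactly the paper's approach, and your derivation of the $\omega$ bound is essentially identical to it (drop the nonnegative $\varphi$ term, then bound $\sigma_{K^*}t_{K^*}$ from below).

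There is a genuine gap in your handling of the $\varphi$ bound. Your fallback argument sums
\[
2t_k(\varphi(x^k)-\varphi^*)\leq \|x^{k-1}-x^*\|^2-\|x^k-x^*\|^2+2\sigma_k t_k\Delta_\omega
\]
over $\mathcal{K}$ and then wants to replace the left side by $2K\,t_{K^*}(\varphi(x^{K^*})-\varphi^*)$. But $K^*$ minimizes the \emph{combined} quantity $t_k(\varphi(x^k)-\varphi^*)+2t_k\sigma_k(\omega(x^k)-\omega^*)$, not $t_k(\varphi(x^k)-\varphi^*)$ alone, so you cannot conclude that the $\varphi$-summand at $K^*$ is below the average of the $\varphi$-summands. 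That step fails.

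The paper's route is simpler and avoids this altogether: stay with the single inequality you already have at $K^*$, namely
\[
t_{K^*}(\varphi(x^{K^*})-\varphi^*)+\sigma_{K^*}t_{K^*}(\omega(x^{K^*})-\omega^*)\leq \frac{D^2}{2K},
\]
divide by $t_{K^*}$, and move the $\omega$ term to the right \emph{without} assuming its sign. What you need is $\omega^*-\omega(x^{K^*})\leq \Delta_\omega$, which holds trivially because $\omega(x^{K^*})\geq \ubar\omega$ by Assumption~\ref{ass:omega_min}; you had the inequality reversed in your aside, which is why you thought it was ``not directly available.'' This yields
\[
\varphi(x^{K^*})-\varphi^*\leq \frac{D^2}{2K t_{K^*}}+\sigma_{K^*}\Delta_\omega\leq \frac{D^2\alpha_2}{2K}+\frac{\Delta_\omega}{K^\beta},
\]
using $1/t_{K^*}\leq \alpha_2$ and $\sigma_{K^*}\leq \sigma_K=K^{-\beta}$. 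No second summation is needed.
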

\begin{proof}
 By the definition of $K^*$, applying Lemma \ref{lem:IRE-PG} to $k\in \mathcal{K}$
\begin{eqnarray}
    2t_{K^*}(\varphi(x^{K^*}) - \varphi^*)+2\sigma_{K^*}t_{K^*}(\omega(x^{K^*}) - \omega^*)\nonumber
    &\leq& 
    2t_k(\varphi(x^{k}) - \varphi^*)+2t_k\sigma_k(\omega(x^{k}) - \omega^*)\nonumber\\
    &\leq&\|x^{k-1} - x^*\|^2 - \|x^{k} - x^*\|^2.\label{3.13}
    \end{eqnarray}
    Summing \eqref{3.13} over $k\in\mathcal{K}$ and dividing by $2K$, we get
    \begin{equation}
       t_{K^*}(\varphi(x^{K^*}) - \varphi^*)+\sigma_{K^*}{t_{K^*}}(\omega(x^{K^*}) - \omega^*)\leq \frac{\norm{x^{K}-x^*}^2}{2K}\leq \frac{D^2}{2K}.\label{3.14}
    \end{equation}
    We start by proving the convergence rate with respect to $\omega$.
    If $\omega(x^{K^*}) - \omega^*\leq 0$ then the result trivially holds. Suppose  $\omega(x^{K^*}) - \omega^*> 0$. Then, recalling that $\sigma_{K^*}t_{K^*}=\pi_{K^*}\geq \ubar{\pi}_{K^*}=\ubar{t}_{K^*}\sigma_{K^*}$  (see the proof of Theorem~\ref{them:conv_rate_IREPG} for definition of $\ubar{t}_k$), with $\ubar{\pi}_k$ being nonincreasing, we have that $\pi_{K^*}\geq \ubar{\pi}_{2K}.$ Thus, using \eqref{3.14}, and $\varphi(x^{K^*})\geq \varphi^*$, 
    and similar  derivation to \eqref{eq:bound_on_sum_pi} we obtain that \begin{equation*}\omega(x^{K^*}) - \omega^*\leq \frac{D^2}{2}\frac{1}{K\sigma_{2K}\ubar{t}_{2K}}\leq \frac{D^2}{2}\frac{\alpha_1}{2^{-\beta}K^{1-\beta}}\leq \frac{D^2\alpha_1}{K^{1-\beta}}.\end{equation*}
  Moving to the convergence of $\varphi$, it follows from \eqref{3.14} that
     \begin{align*}
\label{eq:bound_varphi_Kstar1}\varphi(x^{K^*})-\varphi^*&\leq \frac{D^2}{2Kt_{K^*}}+(\omega^*-\omega(x^{K^*}))\sigma_{K^*}\leq \frac{D^2\alpha_3}{2K}+\frac{\Delta_\omega}{K^{\beta}},
     \end{align*}
where the second inequality is due to $\omega^*-\omega(x^k)\leq \Delta_\omega$, the fact that $\sigma_{K^*}\leq \sigma_{K}=K^{-\beta}$, the definition of $\alpha_3$ and the fact that 
\begin{equation*}\frac{1}{t_{K^*}}\leq\begin{cases}
    L_1 + L_2, & t_k \text{ is a constant step size},\\
    \max\left\{\frac{L_1+L_2}{\gamma},\frac{1}{\bar{t}}\right\}, & t_k \text{ is a backtracking step size}.
\end{cases}\end{equation*}
\end{proof}
Thus, looking at $K^*$ enables us to extend the trade-off between the convergence rates of the {inne and outer} functions beyond both our ergodic result in Theorem~\ref{them:conv_rate_IREPG} and those obtained by Bi-SG in \cite{merchavshoham2023}.  Theorem~\ref{thm:conv_rate_IREPG2} also allows us to utilize this trade-off to address situations where computing the proximal operator over $\sigma_kg_1+g_2$ is computational expensive.
Specifically, applying {Proposition~\ref{prop:interpert_rate}\ref{prop:interpert_rate_iii}} to the rate results obtained in Theorem~\ref{thm:conv_rate_IREPG2} we can conclude the following.

\begin{cor}\label{cor:ire_pg}
    Let $\{w^k\equiv(x^k,p^k)\}_{k\in\N}$ be the sequence generated by \ref{alg:IRE-PG} applied to problem~\eqref{eq:BLO_surrogate} with $\sigma_k=\frac{1}{k^{2/3}}$. 
    If $\omega$ is coercive and real valued, then {for} any  $x^*\in\ubar{X}$ there exists constants $\theta_1>0$ and $\theta_2>0$ such that for any $K\in \N$ and $K^*$ as defined in Theorem~\ref{thm:conv_rate_IREPG2} the iterate $x^{K^*}$ satisfies
 \begin{align*}
     \omega({x}^{K^*})-\omega(x^*)\leq \frac{\theta_1}{K^{1/3}}%\frac{D^2(\alpha_1+\alpha_2)}{2^{1/3}K^{1/3}}
     ,\quad \varphi({x}^{K^*})-\varphi^*\leq \frac{\theta_2}{K^{2/3}}.%\frac{D^2\alpha_3/2+(\omega(x^*)-\bar{\omega})}{K^{2/3}}
 \end{align*}
\end{cor}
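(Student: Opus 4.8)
The plan is to invoke Proposition~\ref{prop:interpert_rate}\ref{prop:interpert_rate_iii} with the surrogate functions $\tilde\varphi$ and $\tilde\omega$, and feed it the explicit rates $\theta_\varphi(k)$ and $\theta_\omega(k)$ coming from Theorem~\ref{thm:conv_rate_IREPG2}. First I would note that the choice $\beta = 2/3 \in (0,1)$ is admissible for Theorem~\ref{thm:conv_rate_IREPG2}, and that since $\omega$ is coercive and real-valued, $\tilde\omega(w)=\omega(p)$ and $\tilde\varphi(w)=\varphi(x)+\tfrac{\gamma}{2}\norm{x-p}^2$ satisfy Assumptions~\ref{ass:composite} and~\ref{ass:omega_min} (this was already established in Section~\ref{sec:rate_translate}). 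Moreover $g_1$ is real-valued here because $\omega=f_1+g_1$ with $\omega$ real-valued forces $g_1=\omega-f_1$ real-valued, so the hypothesis of part~\ref{prop:interpert_rate_iii} for the $g_1$ case is met.

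Next I would apply Theorem~\ref{thm:conv_rate_IREPG2} to the surrogate bi-level problem~\eqref{eq:BLO_surrogate}, obtaining a point $x^{K^*}$ (here the ``$x$-iterate'' is $w^{K^*}=(x^{K^*},p^{K^*})$) for which
\begin{align*}
\tilde\omega(w^{K^*})-\omega^* &\leq \frac{\bar D^2\bar\alpha_1}{K^{1-\beta}} = \frac{\bar D^2\bar\alpha_1}{K^{1/3}}=:\theta_\omega(K),\\
\tilde\varphi(w^{K^*})-\varphi^* &\leq \frac{\bar D^2\bar\alpha_2}{2K}+\frac{\Delta_{\tilde\omega}}{K^{\beta}} \leq \frac{\theta}{K^{2/3}}=:\theta_\varphi(K),
\end{align*}
where $\bar D,\bar\alpha_1,\bar\alpha_2,\Delta_{\tilde\omega}$ are the constants of Theorem~\ref{thm:conv_rate_IREPG2} computed for the surrogate problem (note $\Delta_{\tilde\omega}=\omega^*-\ubar\omega=\Delta_\omega$), and $\theta$ absorbs both terms using $1/K \le 1/K^{2/3}$. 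These $\theta_\varphi,\theta_\omega$ are nonnegative, decreasing, and vanish as $K\to\infty$, as required. Then Proposition~\ref{prop:interpert_rate}\ref{prop:interpert_rate_i} gives $\varphi(x^{K^*})-\varphi^*\leq\theta_\varphi(K)\leq\theta_2/K^{2/3}$, and Proposition~\ref{prop:interpert_rate}\ref{prop:interpert_rate_iii} (the $g_1$ real-valued branch) gives
\[
\omega(x^{K^*})-\omega^* \leq \theta_\omega(K) + \ell_1\sqrt{\tfrac{2}{\gamma}\theta_\varphi(K)} \leq \frac{\bar D^2\bar\alpha_1}{K^{1/3}} + \ell_1\sqrt{\tfrac{2\theta}{\gamma}}\,\frac{1}{K^{1/3}} =: \frac{\theta_1}{K^{1/3}},
\]
where $\ell_1$ is the Lipschitz constant of $\omega$ on the compact set $S_p$ from Proposition~\ref{prop:interpert_rate}\ref{prop:interpert_rate_ii}; crucially the dominant contribution to the $\omega$-rate is the $\sqrt{\theta_\varphi(K)}=\mathcal{O}(K^{-1/3})$ term, which is exactly why the exponent degrades from $2/3$ on $\varphi$ to $1/3$ on $\omega$ and why $\beta=2/3$ is the balancing choice. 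Setting $\theta_1,\theta_2$ to the resulting constants finishes the proof.

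The only genuinely delicate point is bookkeeping: one must make sure that applying Theorem~\ref{thm:conv_rate_IREPG2} and Proposition~\ref{prop:interpert_rate} to the \emph{surrogate} problem is legitimate, i.e.\ that all the standing assumptions (Assumptions~\ref{ass:composite}, \ref{ass:omega_min}, \ref{ass:sigma}, proximal-friendliness, $\varphi^*>-\infty$ attained, $\dom(g_1)\cap\dom(g_2)\neq\emptyset$) transfer to $\tilde\varphi,\tilde\omega$ — which they do, by the discussion in Section~\ref{sec:rate_translate} — and that ``$\omega$ coercive'' is what upgrades the surrogate level sets $S_p,S_x$ to compact sets so that $\ell_1$ is finite. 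Everything else is a direct substitution of the two displayed rates into the two cited results, with constants collected into $\theta_1,\theta_2$.
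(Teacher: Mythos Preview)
Your proposal is correct and follows exactly the approach indicated in the paper: apply Theorem~\ref{thm:conv_rate_IREPG2} with $\beta=2/3$ to the surrogate problem~\eqref{eq:BLO_surrogate} to obtain $\theta_\varphi(K)=\mathcal{O}(K^{-2/3})$ and $\theta_\omega(K)=\mathcal{O}(K^{-1/3})$, then feed these into Proposition~\ref{prop:interpert_rate}\ref{prop:interpert_rate_i} and~\ref{prop:interpert_rate_iii} (the $g_1$ real-valued branch) to recover the bounds on $\varphi(x^{K^*})$ and $\omega(x^{K^*})$. Your observation that $\beta=2/3$ balances $K^{-(1-\beta)}$ against $\sqrt{\theta_\varphi(K)}=\mathcal{O}(K^{-\beta/2})$ is the right way to see why this particular exponent is singled out.
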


We note that Theorem~\ref{thm:conv_rate_IREPG2} provides stronger convergence results than what we are able to obtain for the ergodic sequence. However, the significance of this result is mainly theoretical, since computing $K^*$ for increasing values of $K$ requires storing an increasing amount of iterations and is therefore not implementable in practice.

%\section*{Acknowledgments}
%We would like to acknowledge the assumptionistance of volunteers in putting
%together this example manuscript and supplement.

\section{Iteratively REgularized Accelerated Proximal Gradient (IRE-APG)}\label{sec:five}
   In this section, we present and analyze an accelerated version of \ref{alg:IRE-PG}, based on FISTA. The Iteratively REgularized Accelerated Proximal Gradient (IRE-APG) is given in Algorithm~\ref{alg:IRE-APG}, where we consider a constant step size, as well as a modified backtracking procedure.
   \begin{varalgorithm}{IRE-APG}.
    \caption{
    Iteratively REgularized Accelerated Proximal Gradient}\label{alg:IRE-APG}
    \begin{enumerate}
        \item[  ] {\bf Parameters}: A sequence $\{\sigma_{k}\}_{k\in\N}$, scalar $\bar{t}>0$, and an initial point $x^0\in \R^n$.
        \item[ ] {\bf Initialization}: $y^0=x^0\in \R^n$, $s_0=1$, and $t_0=\bar{t}$.
        \item[ ] {\bf General Step}: For $k=1,2,\dots$
        \begin{enumerate}[label=\arabic*.]
            \item%[ ]{\bf Step 1}: 
            Set $F_k=\sigma_kf_1 + f_2$, $G_k=\sigma_kg_1 + g_2$,  and choose $t_k$ from these options
            \begin{enumerate}[label={(\alph*)}]
            \item $t_k=\frac{1}{L_2+\sigma_kL_1}$ (constant step size).
                \item $t_k=\mathcal{B}^{F_k}_{G_k}(y^{k-1},t_{k-1})$ defined by Algorithm~\ref{alg:backtracking} (backtracking step size).
            \end{enumerate} 
            \item Compute \vskip-30pt
    \begin{align*}%\tag{IRE-APG}
            x^{k} &= prox_{t_{k}G_k}(y^{k-1} - t_{k}\nabla F_k(y^{k-1})),\\
            y^{k}&=x^{k}+\frac{s_{k-1}-1}{s_k}(x^{k} - x^{k-1}),%\label{IRE-APG}
            \end{align*}
        %\item%[ ]{\bf Step 2}:
        %Compute
        %$$$$
        %where $\beta_k = \frac{s_{k-1}-1}{s_k}$ and 
        \vskip-10pt where the sequence $\{s_k\}_{k\in\N}$ satisfies $s_{k} = \frac{1+\sqrt{1+4s^2_{k-1}}}{2}$ %\shim{and $s_k\geq (k+2)/2$}.
        \end{enumerate}
    \end{enumerate}    
\end{varalgorithm}
       
       In order to analyze the convergence rate of Algorithm~\ref{alg:IRE-APG} for some optimal solution $x^*\in\ubar{X}$ to problem~\eqref{Bilevel}, we define $v^k := \varphi(x^k) - \varphi^*$, $z^k := \omega(x^k) - \omega^*$, and $u^k:= (s_{k-1}x^k-(s_{k-1}-1)x^{k-1}-x^*)$,
       which we use to prove the following auxiliary results needed for the analysis. %\ade{Specifically, we will need the following bounds on $v^k$ and $\norm{u^k}$ in our analysis.}
%Lemma~\ref{lemma:first_step_fista} bounds this function for $k=1$ and Lemma~\ref{lem:rec_apg} generalizes this bound for any $k\in\N$.
       
       %\begin{lemma}\label{lemma:first_step_fista}
       %Let $\{x^k\}_{k\in\N}$ be the sequence generated by Algorithm \ref{alg:IRE-APG}. Then, for any $x^*\in\ubar{X}$ the following inequality holds
           %$$\frac{1}{2}\|u^{1}\|^2 + t_{1}v^{1} + t_{1}\sigma_1z^1 \leq 
           %\theta_1(L_1+L_2)}
           %\frac{1}{2}\norm{x^0-x^*}.$$
       %\end{lemma}
       %\begin{proof}
        %Since $x^0=y^0$, it follows from Lemma~\ref{lem:property_fista_seq} with  
        %$x = x^*$ that
       %\begin{equation}
       %v^{1} + \sigma_{1}z^1=\phi_{1}(x^1)-\phi_{1}(x^*)\leq \frac{1}{2t_1}(\norm{x^0-x^*}^2-\norm{x^1-x^*}^2).\label{proxgrad}
       %\end{equation}
       %\ade{Note that by definition, $u^1=s_0x^1-(s_0-1)x^0-x^*=x^1-x^*$, which combines with \eqref{proxgrad} gives} 
       %\begin{eqnarray*}
       %\frac{1}{2}\|u^{1}\|^2+t_1v^1+t_1\sigma_{1}z^1 &\leq& \frac{1}{2}\|x^0-x^*\|^2.
       %\end{eqnarray*}
       %\end{proof}  
\begin{lemma}\label{lem:rec_apg}
            Let $\{x^k\}_{k\in\N}$ be the sequence generated by \ref{alg:IRE-APG}, and let  $x^*\in \ubar{X}$. 
\begin{enumerate}[label=(\roman*)]
\item             
If $t_k$ is chosen as a constant step size, then for any $K\in \mathbb{N}$, 
\begin{equation*}\hspace{-22pt}\frac{1}{2t_{K}}\|u^{K}\|^2 + s^2_{K-1}v^{K}\leq \frac{(L_1 + L_2)}{2}\norm{x^0-x^*}^2+\sum_{k=1}^{K-1}s^2_{k-1}z^k(\sigma_{k+1} - \sigma_{k}) - \sigma_Ks^2_{K-1}z^{K}.\end{equation*}
\item if $t_k$ is chosen through \ref{alg:backtracking}, then for every $K\in \mathbb{N}$,
\begin{equation*}\hspace{-22pt}\frac{1}{2}\|u^{K}\|^2 + {s^2_{K-1}}t_Kv^{K}\leq\frac{1}{2}\norm{x^0-x^*}^2+\sum_{k=1}^{K-1}s_{k-1}^2z^k(\sigma_{k+1}t_{k+1}-\sigma_kt_k)-\sigma_{K}t_{K}s_{K-1}^2z^K.\end{equation*}
%\ade{$$\frac{1}{2t_K}\|u^{K}\|^2 + {s^2_{K-1}}v^{K}\leq \frac{\max\{\gamma^{-1}(L_1+L_2),\bar{t}^{-1}\}}{2}\norm{x^0-x^*}^2+\frac{1}{t_K}\sum_{k=1}^{K-1}s_{k-1}^2z^k(\sigma_{k+1}t_{k+1}-\sigma_kt_k)-\sigma_{K}s_{K-1}^2z^K$$.i.e.
%$$\frac{1}{2t_K}\|u^{K}\|^2 + {s^2_{K-1}}v^{K}\leq \frac{\theta_1(L_1+L_2)}{2}\norm{x^0-x^*}^2 - \sigma_Ks^2_{K-1}z^K+\begin{cases} \sum_{k=1}^{K-1}s^2_{k-1}z^k(\sigma_{k+1} - \sigma_{k}) & t_k \text{ is chosen as constant step-size }\\
          % \frac{1}{t_K}\sum_{k=1}^{K-1}s_{k-1}^2z^k(\sigma_{k+1}t_{k+1}-\sigma_kt_k) & t_k \text{ is chosen through \ref{alg:backtracking}}\end{cases},$$
          % where 
    %$$\begin{cases}
    %\theta_1 = 1 & t_k \text{ is chosen as constant step-size }\\
          % \max\{\gamma^{-1},\bar{t}^{-1}(L_1+L_2)^{-1}\} & t_k \text{ is chosen through \ref{alg:backtracking}}\end{cases}.$$
\end{enumerate}
       \end{lemma}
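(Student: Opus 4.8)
The plan is to mimic the classical FISTA convergence analysis (cf. the proof of the $\mathcal{O}(1/k^2)$ rate in \cite[Section 10.7]{beck2017}), but carrying the iterative-regularization term along. The starting point is Lemma~\ref{lem:property_fista_seq} applied to the convex composite function $\phi_k = F_k + G_k$: for every $x\in\R^n$ and $k\in\N$,
\begin{equation*}
 t_k\bigl(\phi_k(x)-\phi_k(x^k)\bigr)\geq \tfrac12\bigl(\norm{x^k-x}^2-\norm{y^{k-1}-x}^2\bigr).
\end{equation*}
I would instantiate this twice, once with $x=x^{k-1}$ and once with $x=x^*$, and then take the convex combination with weights $1-1/s_k$ and $1/s_k$ respectively, exactly as in the standard proof. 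Writing $\phi_k(x^{k-1})-\phi_k(x^k) = \bigl(\varphi(x^{k-1})-\varphi(x^k)\bigr)+\sigma_k\bigl(\omega(x^{k-1})-\omega(x^k)\bigr)$ and $\phi_k(x^*)-\phi_k(x^k) = -v^k + \sigma_k(\omega^*-\omega(x^k)) \le -v^k - \sigma_k z^k$ (using $\varphi(x^*)=\varphi^*$, $\omega(x^*)=\omega^*$, and $\varphi(x^k)\ge\varphi^*$), the combination produces, after multiplying through by $s_k^2$ and using $s_{k-1}^2 = s_k^2 - s_k$ (the defining relation of the FISTA sequence), a telescoping-ready inequality of the form
\begin{equation*}
 s_k^2 t_k v^k + \tfrac12\norm{u^{k+1}}^2 \le s_{k-1}^2 t_k v^{k-1} + \tfrac12\norm{u^k}^2 + s_k^2 t_k\sigma_k\bigl(\text{outer terms}\bigr),
\end{equation*}
where $u^k = s_{k-1}x^k-(s_{k-1}-1)x^{k-1}-x^*$ appears through the identity $s_k^2\norm{x^k-x^*}^2 - s_k(s_k-1)\norm{x^k-x^{k-1}}^2 = \|u^{k+1}\|^2$ combined with $s_{k-1}y^{k-1} = s_k x^{k-1} + (s_{k-1}-1)(x^k - x^{k-1})$ ... careful bookkeeping of which index the step size $t_k$ versus $t_{k-1}$ is attached to is what separates parts (i) and (ii).

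For part (i), the constant step size $t_k = 1/(L_2+\sigma_k L_1)$ is nonincreasing in $k$ (since $\sigma_k$ is nonincreasing), so when I telescope I can bound each $t_k$ in the ``potential'' term by $t_{k-1}$ and in particular replace the very first $t_1$-type coefficient by $1/(L_1+L_2) \le t_1$; this is where the $\tfrac{L_1+L_2}{2}\norm{x^0-x^*}^2$ on the right-hand side comes from (using $x^0 = y^0 = x^1$-initialization, $s_0 = 1$, so $u^1 = x^0 - x^*$). The outer terms, after summing, should collapse: the coefficient of $z^k$ is $s_{k-1}^2 t_k\sigma_k$ from the ``$x=x^*$'' branch and $-s_k^2 t_{k+1}\sigma_{k+1}$-type contributions from the ``$x=x^{k-1}$'' branch at the next index, and with the constant step size one further bounds $t_k\le 1/L_2$ or simply absorbs $t_k$ — here I would actually expect the clean statement to use that in the constant-step case the $z^k$ coefficients become $s_{k-1}^2(\sigma_{k+1}-\sigma_k)$ after the $t$-factors are handled, matching the displayed inequality, together with the leftover $-\sigma_K s_{K-1}^2 z^K$ boundary term. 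For part (ii), the backtracking step size satisfies $t_k \le t_{k-1}$ by construction of $\mathcal{B}^{F_k}_{G_k}(y^{k-1},t_{k-1})$ (it starts from $\bar t$ at $k=1$ and never increases), which is precisely the monotonicity needed for the same telescoping argument; the difference is that the step sizes do not simplify, so $t_k$ stays glued to each term and the $z^k$ coefficient is $s_{k-1}^2(\sigma_{k+1}t_{k+1}-\sigma_k t_k)$, and the $\tfrac12\norm{x^0-x^*}^2$ on the right uses $u^1 = x^0-x^*$ with $t_1\le \bar t = t_0$ absorbed into the left-hand coefficient $\tfrac12\norm{u^K}^2$ (no $t$-factor there in part (ii), unlike part (i)).

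The main obstacle I anticipate is the index bookkeeping: matching up the coefficient of $z^k$ coming from the $x=x^*$ branch at iteration $k$ with the coefficient coming from the $x=x^{k-1}$ branch at iteration $k+1$, while simultaneously tracking whether the step-size factor attached is $t_k$ or $t_{k+1}$ and whether the monotonicity inequality ($t_{k+1}\le t_k$, $\sigma_{k+1}\le\sigma_k$) is being applied in the correct direction — a direction error here flips the sign of the telescoped remainder. A secondary subtlety is verifying the algebraic identity that turns $s_k$-weighted combinations of $\norm{x^k-x^*}^2$, $\norm{x^{k-1}-x^*}^2$, and $\norm{x^k-x^{k-1}}^2$ into $\norm{u^{k+1}}^2 - \norm{u^k}^2$; this is the standard FISTA identity but must be checked against this paper's specific definition $u^k = s_{k-1}x^k-(s_{k-1}-1)x^{k-1}-x^*$ and the relation $s_{k-1}^2 = s_k^2 - s_k$. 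Once these two pieces are in hand, summing from $k=1$ to $K$ and dropping the nonnegative $z$-free leftover terms gives exactly the two displayed inequalities.
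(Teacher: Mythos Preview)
Your overall plan is exactly the paper's: apply Lemma~\ref{lem:property_fista_seq} at the convex combination point (or, equivalently, apply it twice and combine), use $s_{k-1}^2=s_{k-2}^2+s_{k-1}$ to produce a telescoping recursion in $\|u^k\|^2$ and $s_{k-1}^2 v^k$, and then sum. The part-(ii) argument you sketch is correct.

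There is, however, a genuine direction error in your treatment of part (i). You write that the constant step size $t_k=1/(L_2+\sigma_k L_1)$ is \emph{nonincreasing} because $\sigma_k$ is nonincreasing; in fact it is \emph{nondecreasing} (the denominator decreases). This is not a typo-level slip, because it determines which form of the recursion telescopes. The raw one-step inequality the analysis produces is
\[
\frac{1}{2t_k}\|u^k\|^2+s_{k-1}^2 v^k\le \frac{1}{2t_k}\|u^{k-1}\|^2+s_{k-2}^2 v^{k-1}+\sigma_k\bigl(s_{k-2}^2 z^{k-1}-s_{k-1}^2 z^k\bigr),
\]
with the \emph{same} $t_k$ on both $\|u\|^2$ terms. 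To telescope you must either (a) replace $\tfrac{1}{2t_k}\|u^{k-1}\|^2$ by $\tfrac{1}{2t_{k-1}}\|u^{k-1}\|^2$, which requires $t_k\ge t_{k-1}$, or (b) multiply through by $t_k$ and replace $t_k s_{k-2}^2 v^{k-1}$ by $t_{k-1}s_{k-2}^2 v^{k-1}$, which requires $t_k\le t_{k-1}$. Your displayed recursion (with $t_k$ on the $v$-terms and no $t$ on $\|u\|^2$) is the route-(b) form, and you try to use it for \emph{both} parts. That works for backtracking (part~(ii), $t_k\le t_{k-1}$), but for the constant step size (part~(i), $t_k\ge t_{k-1}$) route (b) fails and you must use route (a). This is also why the target inequality in (i) carries $\tfrac{1}{2t_K}\|u^K\|^2$ and has \emph{no} $t$-factors on the $z^k$ coefficients, whereas (ii) carries $t_K s_{K-1}^2 v^K$ and has $t$-factors inside $(\sigma_{k+1}t_{k+1}-\sigma_k t_k)$: the two displayed forms are not cosmetic variants but encode the two opposite monotonicities.

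Two smaller points: with $s_0=1$ you have $u^1=s_0x^1-(s_0-1)x^0-x^*=x^1-x^*$, not $x^0-x^*$; the $\tfrac12\|x^0-x^*\|^2$ on the right comes from a separate base-case application of Lemma~\ref{lem:property_fista_seq} at $k=1$ (using $y^0=x^0$), not from identifying $u^1$ with $x^0-x^*$. And your indices are shifted by one relative to the paper (you use $s_k$ where the paper uses $s_{k-1}$); harmless, but worth aligning before you redo the bookkeeping.
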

       \begin{proof}
             {We start with the case $K=1$. Since $x^0=y^0$, applying Lemma~\ref{lem:property_fista_seq} with  
        $x = x^*$ result in the  inequality
       \begin{equation}
       v^{1} + \sigma_{1}z^1=\phi_{1}(x^1)-\phi_{1}(x^*)\leq \frac{1}{2t_1}(\norm{x^0-x^*}^2-\norm{x^1-x^*}^2).\label{proxgrad}
       \end{equation}
       By definition, $u^1=s_0x^1-(s_0-1)x^0-x^*=x^1-x^*$. Thus, from \eqref{proxgrad} we get that 
       \begin{eqnarray}
       \frac{1}{2}\|u^{1}\|^2+t_1v^1+t_1\sigma_{1}z^1 &\leq& \frac{1}{2}\|x^0-x^*\|^2.\label{first_step_fista}
       \end{eqnarray}}
         
           Our next two inequalities follow the general convergence analysis of FISTA. Using Lemma~\ref{lem:property_fista_seq} with $x=
           x^*s_{k-1}^{-1}+x^{k-1}(1-s_{k-1}^{-1})$ and the definition of $y^{k-1}$
           we obtain
            \begin{align}
    %\thinmuskip=0mu
    %\medmuskip=0mu
    %\thickmuskip=0mu
            &t_k(\phi_k(x^*s_{k-1}^{-1}+x^{k-1}(1-s_{k-1}^{-1}))-\phi_k(x^k))\nonumber\\
            &\geq 
            \frac{1}{2}\left(\norm{x^k-x^*s_{k-1}^{-1}-x^{k-1}(1-s_{k-1}^{-1})}^2-\norm{y^{k-1}-x^*s_{k-1}^{-1}-x^{k-1}(1-s_{k-1}^{-1})}^2\right)\nonumber\\
           % &=\frac{1}{2s_{k-1}^2}\left(\norm{s_{k-1}x^k-x^*-x^{k-1}(s_{k-1}-1)}^2-\norm{s_{k-1}y^{k-1}-x^*-x^{k-1}(s_{k-1}-1)}^2\right)\nonumber\\
            &=\frac{1}{2s_{k-1}^2}\left(\norm{s_{k-1}x^k-x^*-x^{k-1}(s_{k-1}-1)}^2-\norm{(s_{k-2}x^{k-1}-x^*-(s_{k-2}-1)x^{k-2}}^2\right)\nonumber\\
            &=\frac{1}{2s_{k-1}^2}(\norm{u^{k}}^2-\norm{u^{k-1}}^2)
            \label{eq:lb_recrs_apg}
            \end{align}
            From convexity of $\phi_k$ we have that
            \begin{align}&\phi_k(x^*s_{k-1}^{-1}+x^{k-1}(1-s_{k-1}^{-1}))-\phi_k(x^k)\nonumber\\
            &\leq s_{k-1}^{-1}\phi_k(x^*)+(1-s_{k-1}^{-1})\phi_k(x^{k-1})-\phi_k(x^k)\nonumber\\
            &=\frac{1}{s_{k-1}^2}\big(s_{k-1}(\phi_k(x^*)-\phi_k(x^{k-1}))+s_{k-1}^2(\phi_k(x^{k-1})-\phi_k(x^{k}))\big)\nonumber\\
           %&=\frac{1}{s_{k-1}^2}\big((s_{k-1}^2-s_{k-1})(\phi_k(x^{k-1})-\phi_k(x^*))-s_{k-1}^2(\phi_k(x^{k})-\phi_k(x^*))\big)\nonumber\\
           &= \frac{1}{s_{k-1}^2}(s_{k-2}^2v^{k-1}-s_{k-1}^2v^k+\sigma_k(s_{k-2}^2z^{k-1}-s_{k-1}^2z^{k})),\label{eq:ub_recrs_apg}
        \end{align}
         where the last inequality follows from $s_{k-1}^2-s_{k-1}=s_{k-2}$, and the definitions of  $\phi_k$, $v^k$, and $z^k$. Combining \eqref{eq:lb_recrs_apg} and \eqref{eq:ub_recrs_apg} we get that
for all $k\in \N$, $k\geq 2$
   \begin{eqnarray}
 %      \frac{1}{2t_{k}}\|u^k\|^2 + s^2_{k-1}v^{k} &\leq& 
 \frac{1}{2t_{k}}\|u^{k}\|^2 + s^2_{k-1}v^{k}\leq 
\frac{1}{2t_{k}}\|u^{k-1}\|^2 + s^2_{k-2}v^{k-1} + \sigma_k(s^2_{k-2}z^{k-1} - s^2_{k-1}z^{k}).\label{2.28}
   \end{eqnarray}

   In the \emph{constant step size} case, $t_{k}\geq t_{k-1}$, and thus \eqref{2.28} transforms into 
   \begin{eqnarray}
 \frac{1}{2t_{k}}\|u^{k}\|^2 + s^2_{k-1}v^{k}\leq 
\frac{1}{2t_{k-1}}\|u^{k-1}\|^2 + s^2_{k-2}v^{k-1} + \sigma_k(s^2_{k-2}z^{k-1} - s^2_{k-1}z^{k}).\label{eq:const_step_recurse_apg}
   \end{eqnarray}
   %\ade{In the case of \ref{alg:backtracking}, 
   %\begin{eqnarray}
       %\|u^{k+1}\|^2 + 2s^2_kt_{k+1}v^{k+1} &\leq& \|u^{k+1}\|^2 + 2s_k^2t_kv^{k+1}\nonumber\\
       %&\leq&\|u^{k}\|^2 + 2s^2_{k-1}t_kv^{k} + 2\sigma_kt_k(s^2_{k-1}z^k - s^2_kz^{k+1}),\label{2.28b}
   %\end{eqnarray}
   %where the first inequality follows from the fact that $t_{k+1}\leq t_k$.}
   Summing \eqref{eq:const_step_recurse_apg} over  $k=2,\dots,K$ and using \eqref{first_step_fista}, we get
   \begin{eqnarray}
      \frac{1}{2t_{K}}\|u^{K}\|^2 + s^2_{K-1}v^{K}
      &\leq& \frac{1}{2t_{1}}\|u^{1}\|^2 + s^2_{0}v^{1}+\sum_{k=2}^K\sigma_k(s^2_{k-2}z^{k-1}-s^2_{k-1}z^{k})\nonumber\\
      %&\leq & 
      %\frac{L_1+L_2}{2}\|x^{0}-x^*\|^2-\sigma_1z^1 + \sum_{k=2}^K\sigma_k(s^2_{k-2}z^{k-1}-s^2_{k-1}z^{k})\nonumber\\    
      &\leq&  \frac{L_1+L_2}{2}\|x^{0}-x^*\|^2 +\sum_{k=1}^{K-1}s^2_{k-1}z^k(\sigma_{k+1} - \sigma_{k}) - \sigma_{K}s^2_{K-1}z^{K}.\label{2.29}
   \end{eqnarray}
%\shim{[should probably separate the analysis, since it seems we will need a different $\pi$]}
In the \emph{backtracking step size} case, $t_{k-1}\geq t_{k}$, and \eqref{2.28}
is equivalent to \begin{equation}
\frac{1}{2}\|u^{k}\|^2 + t_ks^2_{k-1}v^{k}\leq 
\frac{1}{2}\|u^{k-1}\|^2 + t_{k-1}s^2_{k-2}v^{k-1} + t_k\sigma_k(s^2_{k-2}z^{k-1} - s^2_{k-1}z^{k}).\label{eq:backtracking_recurse_apg}
\end{equation}
Summing \eqref{eq:backtracking_recurse_apg} over $k=\{2,3,\ldots,K\}$ and using \eqref{first_step_fista}, we get
\begin{align}
    %t_K\left(\frac{1}{2t_{K}} \|u^{K}\|^2 + {s^2_{K-1}}v^{K}\right)&\leq&  
     \frac{1}{2}\|u^{K}\|^2 + {s^2_{K-1}}{t_{K}}v^{K}%\nonumber\\
     &\leq \frac{1}{2}\|u^{1}\|^2 + s^2_{0}v^{1}t_{1}+\sum_{k=2}^K\sigma_kt_k(s^2_{k-2}z^{k-1}-s^2_{k-1}z^{k})\nonumber\\
     %&\leq& \frac{1}{2}\norm{x^0-x^*}^2-t_{1}\sigma_1z^1+\sum_{k=2}^K\sigma_kt_k(s^2_{k-2}z^{k-1}-s^2_{k-1}z^{k})\nonumber\\
     &\leq\frac{1}{2}\norm{x^0-x^*}^2\hspace{-2pt}+\hspace{-2pt}\sum_{k=1}^{{K-1}}s_{k-1}^2z^k(\sigma_{k+1}t_{k+1}-\sigma_kt_k)-\sigma_{K}t_{K}s_{K-1}^2z^K.\nonumber\\
     \nonumber
\end{align}
%From \eqref{2.29b} we obtain
%\begin{equation*}
%   \|u^{K+1}\|^2 + 2s^2_Kt_{K+1}v^{K+1}\leq \norm{x^0-x^*}^2+2\sum_{k=1}^Ks^2_{k-1}z^k(\sigma_kt_k - \sigma_{k-1}t_{k-1}) - 2\sigma_Kt_Ks^2_Kz^{K+1}. 
%\end{equation*}
%}  
 %  Therefore from \eqref{2.29} we have
  % $$ \frac{1}{2t_{K+1}}\|u^{K+1}\|^2 + s^2_Kv^{K+1}\leq \frac{L_1+L_2}{2}\norm{x^0-x^*}^2+\sum_{k=1}^Ks^2_{k-1}z^k(\sigma_k - \sigma_{k-1}) - \sigma_Ks^2_Kz^{K+1}.$$
\end{proof}

Our {next} result involves the convergence of $\varphi(x^k)$ to $\varphi^*$ (feasibility gap) for a specific choice of $\sigma_k$ similar to \ref{alg:IRE-PG}. %We will utilize this result to obtain the rate of convergence of the feasibility gap for the ergodic sequence. 
\begin{prop}\label{prop:vbound}
  Let {$x^*\in \ubar{X}$, and} let $\{x^k\}_{k\in\N}$ be the sequence generated by \ref{alg:IRE-APG} with $\sigma_k = k^{-\beta}$, for some  $\beta\in (0,2]$. %If $t_k$ is chosen by constant step size, 
  Then for all $K\in \mathbb{N}$,
  \begin{equation*}
      \varphi(x^{K}) - \varphi^*\leq \frac{2\alpha_1\norm{x^0-x^*}^2}{(K+1)^2}+ \frac{4\alpha_2(\alpha_3+\alpha_4\log(K))\Delta_\omega}{(K+1)^\beta},
  \end{equation*}
  %If, alternatively, $t_k$ is chosen by \ref{alg:backtracking}, then for all $K\in\N$ 
  %$$\varphi(x^K) - \varphi(x^*)\leq 2\max\{\gamma^{-1}(L_1+L_2),\bar{t}^{-1}\}\big(\frac{1}{(K+1)^2}\norm{x^0-x^*}^2 + \frac{2\bar{t}}{(K+1)^\beta}(\theta_2 + \theta_3\log(K))(\omega^*-\ubar{\omega})\big),$$
  where 
  $\alpha_1=L_1+L_2$ and $\alpha_2=2$ if $t_k$ is chosen as a constant step size,  or alternatively ${\alpha_1=
\max\{\gamma^{-1}(L_1+L_2),\bar{t}^{-1}\}}$ and $\alpha_2=\alpha_1\bar{t}$
if $t_k$ is chosen through \ref{alg:backtracking}, and
  \begin{equation*}
            \alpha_3=\begin{cases}
                {\frac{1}{2-\beta}}, &\beta\in[0,2),\\
                1, & \beta=2,
            \end{cases}\text{ and }
            \alpha_4=\begin{cases} 0, & \beta\in(0,2),\\
           1, & \beta=2.\end{cases}
        \end{equation*}
\end{prop}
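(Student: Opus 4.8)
The plan is to start from the recursion established in Lemma~\ref{lem:rec_apg}, applied with the specific choice $\sigma_k=k^{-\beta}$, and to extract from it an upper bound on $v^K=\varphi(x^K)-\varphi^*$ by dropping the nonnegative term $\tfrac{1}{2t_K}\|u^K\|^2$ (respectively $\tfrac12\|u^K\|^2$) on the left-hand side. This reduces the task to bounding the right-hand side, which consists of three pieces: the initial term involving $\|x^0-x^*\|^2$, the telescoping-like sum $\sum_{k=1}^{K-1}s_{k-1}^2 z^k(\sigma_{k+1}-\sigma_k)$ (or its backtracking analogue with $\sigma_k t_k$), and the final term $-\sigma_K s_{K-1}^2 z^K$. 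Since $z^k=\omega(x^k)-\omega^*$ can be negative, but $\omega(x^k)\ge\ubar\omega$ gives $z^k\ge-\Delta_\omega$, and $\sigma_{k+1}-\sigma_k\le 0$ by monotonicity, each summand $s_{k-1}^2 z^k(\sigma_{k+1}-\sigma_k)$ is bounded above by $s_{k-1}^2\Delta_\omega(\sigma_k-\sigma_{k+1})$; similarly the last term $-\sigma_K s_{K-1}^2 z^K \le \sigma_K s_{K-1}^2 \Delta_\omega$.

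Next I would bound $s_{k-1}^2\le k^2$ using Lemma~\ref{lem:seq}, so that the problematic sum is controlled by $\Delta_\omega\sum_{k=1}^{K-1}k^2(\sigma_k-\sigma_{k+1})$. For $\sigma_k=k^{-\beta}$ one has $\sigma_k-\sigma_{k+1}=k^{-\beta}-(k+1)^{-\beta}$, which by the mean value theorem is at most $\beta k^{-\beta-1}$ (or one uses $k^{-\beta}-(k+1)^{-\beta}\le\beta k^{-(\beta+1)}$ directly), giving terms of order $k^{1-\beta}$; summing over $k$ invokes Lemma~\ref{lem:sum_bounds}(i), which yields the $\alpha_3+\alpha_4\log K$ structure — the constant $\tfrac{1}{2-\beta}$ for $\beta\in[0,2)$ and the logarithmic term exactly at $\beta=2$. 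Combined with the final term $\sigma_K s_{K-1}^2\Delta_\omega\le K^{2-\beta}\Delta_\omega$ (absorbed into the same order), this shows the right-hand side is $\mathcal{O}(K^{2-\beta})\Delta_\omega$ plus the initial $\tfrac{\alpha_1}{2}\|x^0-x^*\|^2$ term. Finally, dividing through by $s_{K-1}^2$ (in the constant-step case; by $s_{K-1}^2 t_K$ in the backtracking case, then using the lower bound $t_K\ge\min\{\gamma/(L_1+L_2),\bar t\}$ to absorb it into $\alpha_1,\alpha_2$) and using $s_{K-1}^2\ge\tfrac{(K+1)^2}{4}$ from Lemma~\ref{lem:seq} turns the $K^{2-\beta}$ numerator into $(K+1)^{-\beta}$ and the constant into $(K+1)^{-2}$, producing exactly the claimed bound with the stated $\alpha_1,\alpha_2,\alpha_3,\alpha_4$.

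The main obstacle I anticipate is the careful bookkeeping of the constants through the two cases (constant versus backtracking step size), since in the backtracking case the recursion carries the extra factors $t_k$ inside the sum, so one must argue that $\sigma_{k+1}t_{k+1}-\sigma_k t_k$ still has the right sign or at least the right magnitude — this requires knowing that $\sigma_k t_k$ is nonincreasing, which follows from $\sigma_k$ nonincreasing together with the monotonicity properties of $t_k$ established earlier (in the backtracking variant of \ref{alg:IRE-APG}, $t_k\le t_{k-1}$), and then bounding everything uniformly using $\ubar t\le t_k\le\bar t$. A secondary subtlety is making the mean-value-theorem estimate $\sigma_k-\sigma_{k+1}\le\beta k^{-(\beta+1)}$ and the index shift $s_{k-1}^2\le k^2$ line up so that the resulting exponent is exactly $1-\beta$, matching the hypotheses of Lemma~\ref{lem:sum_bounds}(i) (including the boundary case $\beta=2$ where that lemma supplies the $1+\log K$ term that becomes $\alpha_3+\alpha_4\log K$). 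Everything else is routine arithmetic with the constants $\alpha_1,\alpha_2$ defined in the statement.
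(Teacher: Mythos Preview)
Your overall structure is correct and matches the paper: start from Lemma~\ref{lem:rec_apg}, drop the nonnegative $\|u^K\|^2$ term, use $-z^k\le\Delta_\omega$ together with $s_{k-1}^2\le k^2$ to control the right-hand side, then divide by $s_{K-1}^2\ge(K+1)^2/4$ (or $s_{K-1}^2 t_K$ in the backtracking case) and invoke Lemma~\ref{lem:sum_bounds}(i). The one substantive difference is in how you estimate the sum $\sum_{k=1}^{K-1}k^2(\sigma_k-\sigma_{k+1})+K^2\sigma_K$. You propose the mean-value-theorem bound $\sigma_k-\sigma_{k+1}\le\beta k^{-\beta-1}$, giving $\beta\sum k^{1-\beta}$ plus the boundary term $K^{2-\beta}$. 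The paper instead applies Abel summation (summation by parts), rewriting the whole expression exactly as $\sum_{k=1}^K\sigma_k\bigl(k^2-(k-1)^2\bigr)\le 2\sum_{k=1}^K k^{1-\beta}$ before invoking Lemma~\ref{lem:sum_bounds}(i). For the constant step size case both routes work and the resulting constants are compatible with the stated $\alpha_2,\alpha_3,\alpha_4$.

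The backtracking case is where your mean-value-theorem route runs into the obstacle you flag but do not actually resolve. The quantity to control is $\sum_{k=1}^{K-1}s_{k-1}^2(\sigma_kt_k-\sigma_{k+1}t_{k+1})+s_{K-1}^2\sigma_Kt_K$, and MVT does not apply to the increment $\sigma_kt_k-\sigma_{k+1}t_{k+1}$ since $t_k$ is not a smooth function of $k$; nor do the uniform bounds $\ubar t\le t_k\le\bar t$ alone rescue you (the crude estimate $\sigma_kt_k-\sigma_{k+1}t_{k+1}\le\bar t\,\sigma_k$ costs an entire power of $k$ when summed against $k^2$). The paper handles this again by Abel summation, this time using the FISTA identity $s_{k-1}^2-s_{k-2}^2=s_{k-1}$ to collapse the whole sum to $\sum_{k=1}^K\sigma_kt_ks_{k-1}\le \bar t\sum_{k=1}^K k^{1-\beta}$, after which $t_K\ge\tilde\alpha_1^{-1}$ supplies the $\alpha_1,\alpha_2$ constants. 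So for the backtracking half you should replace the MVT step by this summation-by-parts device; once that is done, the rest of your outline goes through unchanged.
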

\begin{proof}
    We start by analyzing the case where $t_k$ is set as a constant step size. Using  Lemma~\ref{lem:rec_apg}(i) to bound $v^k=\varphi(x^k)-\varphi^*$, we start bounding the term on the right-hand-side of the inequality. Recall that $-z^k = \omega^*-\omega(x^k)  \leq \Delta_\omega$ for each $k$, 
   where $\Delta_\omega\geq 0$. Therefore, since $s_{k-1}\leq k$ by Lemma \ref{lem:seq},
   it follows that 
   \begin{eqnarray}
       \sum_{k=1}^{K-1}s^2_{k-1}z^k(\sigma_{k+1} - \sigma_{k})-\sigma_Ks^2_{K-1}z^{K}&\leq& (\sum_{k=1}^{K-1}s^2_{k-1}(\sigma_{k} - \sigma_{k+1})+\sigma_Ks^2_{k-1})\Delta_\omega\nonumber\\
       &\leq&(\sum_{k=1}^{K-1}k^2(\frac{1}{k^\beta} - \frac{1}{(k+1)^\beta})+\frac{K^2}{K^\beta})\Delta_\omega\nonumber\\
       &=&(\sum_{k=1}^{K} \frac{1}{k^\beta}(k^2-(k-1)^2))\Delta_\omega\leq \Delta_\omega\sum_{k=1}^{K}2k^{1-\beta}.\label{2.30b}
   \end{eqnarray} 
   %Since $s_{k-1}\leq k$ by Lemma \ref{lem:seq}, we have that 
   %\begin{eqnarray}
    %   \sum_{k=1}^{K-1}s^2_{k-1}(\sigma_{k} - \sigma_{k+1})+\sigma_Ks^2_{K-1}&\leq& \sum_{k=1}^{K-1}k^2(\frac{1}{k^\beta} - \frac{1}{(k+1)^\beta})+\frac{K^2}{K^\beta}\nonumber\\
    %   &=&\sum_{k=1}^{K} \frac{1}{k^\beta}(k^2-(k-1)^2)\leq \sum_{k=1}^{K}2k^{1-\beta}.\label{2.30b}
   %\end{eqnarray}
\begin{comment}   
   \underline{Case I}: $\beta\in (0,1)$.
   \begin{eqnarray}
       \sum_{k=1}^{K+1}2k^{1-\beta}&\leq& 2\int_1^{K+2}x^{1-\beta}dx \nonumber\\
       &=& 2\big( \frac{(K+2)^{2-\beta}}{2-\beta} - \frac{1}{2-\beta}\big)\nonumber\\
       &\leq& \frac{2(K+2)^{2-\beta}}{2-\beta}.\label{case1}
       \end{eqnarray}
   \underline{Case II}: $\beta = 1$.
   \begin{eqnarray}
       \sum_{k=1}^{K+1}2k^{1-\beta}&=& 2\sum_{k=1}^{K+1}1 = 2(K+1)\leq 2(K+2).\label{case2}
   \end{eqnarray}
   \underline{Case III}: $\beta\in (1,2)$.
   \begin{eqnarray}
       \sum_{k=1}^{K+1}2k^{1-\beta}
       &\leq& 2(1 + \int_1^{K+1}x^{1-\beta}dx)\nonumber\\
       &=& 2(1+\frac{(K+1)^{2-\beta} - 1}{2-\beta}) \nonumber\\
       &\leq& \frac{2(K+1)^{2-\beta}}{2-\beta} \leq \frac{2(K+2)^{2-\beta}}{2-\beta}.\label{case3}
   \end{eqnarray}
   \underline{Case IV}: $\beta = 2$.
   \begin{eqnarray}
       \sum_{k=1}^{K+1}2k^{1-\beta}&=& 2\sum_{k=1}^{K+1}k^{-1}\nonumber\\
       &\leq& 2(1 + \log (K+1)).\label{case4}
   \end{eqnarray}
\end{comment}
   When $t_k$ is chosen as constant step size, Lemma \ref{lem:rec_apg}(i) together with \eqref{2.30b} yield
   %\begin{eqnarray}
    % {\frac{1}{2t_{K}}\|u^{K}\|^2 + s^2_{K-1}v^{K}}
     %&\leq& \frac{L_1 + L_2}{2}\norm{x^0-x^*}^2+\sum_{k=1}^{K-1}s^2_{k-1}z^k(\sigma_{k+1} - \sigma_{k}) - \sigma_Ks^2_{K-1}z^{K}\nonumber\\
     %&\leq&\frac{L_1 + L_2}{2}\norm{x^0-x^*}^2 + (\sum_{k=1}^{K-1}s^2_{k-1}(\sigma_{k} - \sigma_{k+1})+\sigma_Ks^2_{K-1})(\omega^* - \ubar{\omega})\nonumber\\
     %&\leq& \frac{L_1 + L_2}{2}\norm{x^0-x^*}^2 + 2(\omega^*-\ubar{\omega})\sum_{k=1}^{K}k^{1-\beta}.\nonumber
  % \end{eqnarray}
   %where the second inequality follows from \eqref{2.29d}
   %and the last inequality follows from \eqref{2.30b}.
   %Subtracting $(2t_{K})^{-1}\norm{u^K}$ from both sides of the inequality and dividing by $s_{K-1}^2$ results in  
   \begin{eqnarray}
       v^{K}=\varphi(x^{K}) - \varphi(x^*)&\leq& \frac{2(L_1 + L_2)\norm{x^0-x^*}^2}{(K+1)^2}+ \frac{ 8\Delta_\omega\sum_{k=1}^{K}k^{1-\beta}}{(K+1)^2},\label{2.31b}
   \end{eqnarray}
   where we used $s_{k-1}\geq {(K+1)}/{2}$ from Lemma~\ref{lem:seq} to obtain the inequality. Employing Lemma~\ref{lem:sum_bounds}(i) to upper bound $\sum_{k=1}^{K}k^{1-\beta}$ we obtain the desired result. 
   
   Alternatively, if $t_k$ is determined by \ref{alg:backtracking}, we can use Lemma~\ref{lem:rec_apg}(ii) to bound $v_k$. We again start by bounding the right-hand-side term by \begin{eqnarray}
   &&\sum_{k=1}^{K-1}s_{k-1}^2z^k(\sigma_{k+1}t_{k+1}-\sigma_kt_k)-\sigma_{K}t_Ks_{K-1}^2z^K\nonumber\\
   &&\leq 
   (\sum_{k=1}^{K-1}s_{k-1}^2(\sigma_kt_k-\sigma_{k+1}t_{k+1})+\sigma_{K}t_Ks_{K-1}^2)\Delta_\omega\nonumber\\
   &&=(\sigma_1t_1s^2_0 +\sum_{k=2}^K\sigma_kt_K(s^2_{k-1} - s^2_{k-2}))\Delta_\omega\nonumber\\
&&=\sum_{k=1}^K\sigma_kt_Ks_{k-1}\Delta_\omega\leq t_1\Delta_\omega\sum_{k=1}^Kk^{1-\beta},\label{2.30c}
   \end{eqnarray}
   where the first inequality follows from $t_k\geq t_{k+1}$, $\sigma_k\geq \sigma_{k+1}$, and $-z^k\leq \Delta_\omega$, the second equality follows from $s^2_{k-1}-s^2_{k-2} = s_{k-1}$ and $s_0=1$,  and the second inequality is due to $s_{k-1}\leq k$ from Lemma \ref{lem:seq}. Plugging \eqref{2.30c} into Lemma~\ref{lem:rec_apg}(ii) %and dividing by $s^2_{K-1}t_K$, 
   we obtain
   \begin{eqnarray*}
       v^K&\leq& \frac{\norm{x^0-x^*}^2}{2s^2_{K-1}t_K} + \frac{t_1\Delta_\omega}{s^2_{K-1}t_K}\sum_{k=1}^Kk^{1-\beta}%\nonumber\\
       %&\leq&
       \leq \frac{2\alpha_1\norm{x^0-x^*}^2}{(K+1)^2} + \frac{4\alpha_1\bar{t}\Delta_\omega}{(K+1)^2}\sum_{k=1}^K k^{1-\beta},
   \end{eqnarray*}
   where the last inequality follows from applying $s_{K-1}\geq (K+1)/2$ from Lemma~\ref{lem:seq}, the fact that $t_K\geq \min\{\gamma(L_1\sigma_K+L_2)^{-1},t_1\}\geq \alpha_1^{-1}$, as well as $t_1\leq \bar{t}$.
   Applying Lemma~\ref{lem:sum_bounds}(i) to  the last inequality gives the desired result.
\begin{comment}  
   in \eqref{2.31b} we obtain that for $\beta\in (0,2)$
   \begin{eqnarray}
    \varphi(x^{K}) - \varphi(x^*)&\leq& 4\bigg( \frac{(L_1 + L_2)}{2}\norm{x^0-x^*}^2+ \frac{\big(\frac{2(K+1)^{2-\beta}}{2-\beta}\big)(\omega^* - \ubar{\omega})}{(K+1)^2}\bigg) \nonumber\\
    %&=& 4(K+2)^{2-\beta}\bigg(\frac{\frac{(L_1 + L_2)}{2(K+2)^{2-\beta}}\norm{x^0-x^*}^2 + \frac{2}{2-\beta}(\omega^* - \ubar{\omega})}{(K+2)^2}\bigg) \nonumber\\
    &=& \frac{2(L_1 + L_2)\norm{x^0-x^*}^2}{(K+1)^2}+\frac{ 8(\omega^* - \ubar{\omega})}{(2-\beta)(K+1)^\beta},\label{step1}
   \end{eqnarray}
    and for $\beta = 2$, we have
    \begin{eqnarray}
    \varphi(x^{K+1}) - \varphi(x^*)&\leq& 4\bigg(\frac{\frac{(L_1 + L_2)}{2}\norm{x^0-x^*}^2 + 2(1+\log(K+1))(\omega^* - \ubar{\omega})}{(K+2)^2}\bigg). \label{step4}
    \end{eqnarray}
Thus, combining \eqref{step1} - \eqref{step4} gives the result.
\end{comment}
\end{proof}
We are now ready to prove the convergence of the ergodic sequences generated by \ref{alg:IRE-APG}. We note that since this analysis relies on Lemma~\ref{lem:rec_apg}, it varies depending on the type of step size rule chosen.  Theorem~\ref{thm:rate_apg_constant_step} summarizes the convergence rate result in both cases, but the proof for the backtracking case is relegated to the Appendix.
\begin{theorem}%[Rate of Convergence constant stepsize]
\label{thm:rate_apg_constant_step}  Let $\{x^k\}_{k\in\N}$ be the sequence generated by \ref{alg:IRE-APG} with $\sigma_k = k^{-\beta}$, $\beta\in (0, 2]$. Moreover, let
$\bar{x}^K = \sum_{k=1}^{K}\frac{\pi_k}{\sum_{l=1}^{K}\pi_l}x^k$, where $\pi_k$ is defined as follows:
\begin{enumerate}[label=(\alph*)]
\item If $t_k$ is defined as a constant step size,          \begin{equation*}
  \pi_k:=\begin{cases}
                s^2_{k-1}(\sigma_{k}-\sigma_{k+1}),& 1\leq k\leq {K-1},\\
                \sigma_Ks^2_{K-1}, & k = K,
            \end{cases}
        \end{equation*} $\tilde{\alpha}_1 =(L_1+L_2)$, $\tilde{\alpha}_2= 2(L_1+L_2)\|x^0 - x^*\|^2 +{\frac{8}{2-\beta}}\Delta_\omega$, $\tilde{\alpha}_3 = 2(L_1+L_2)\|x^0 - x^*\|^2 + 8\Delta_\omega$ and $\tilde{\alpha}_4=8\Delta_\omega$.  
\item If $t_k$  chosen through \ref{alg:backtracking},
\begin{equation*}
        {\pi}_k:=\begin{cases}
                s^2_{k-1}(\sigma_{k}t_k-\sigma_{k+1}t_{k+1}),& 1\leq k\leq {K-1},\\
            \sigma_Kt_Ks^2_{K-1}, & k = K,
            \end{cases}
        \end{equation*}
$\tilde{\alpha}_1=\max\{(L_1+L_2)\gamma^{-1}, \bar{t}^{-1}\}$,
$\tilde{\alpha}_2 = 2\tilde{\alpha}_1^2\bar{t}(\|x^0 - x^*\|^2 + \frac{2\bar{t}}{(2-\beta)}\Delta_\omega)$,$\tilde{\alpha}_3 = 2\tilde{\alpha}_1^2\bar{t}(\|x^0 - x^*\|^2 + 2\bar{t}\Delta_\omega)$ and $\tilde{\alpha}_4 = 4(\tilde{\alpha}_1^2\bar{t})^2\Delta_\omega$. 
\end{enumerate}
Then, the following inequalities hold, for any  $x^*\in\ubar{X}$ and $K\in\N$ 
        \begin{enumerate}[label=(\roman*)]
            \item \; \vspace{-1.5em} \begin{equation*}
         \omega(\bar{x}^K) - \omega^*\leq\begin{cases}
                \frac{2\tilde{\alpha}_1}{K^{2-\beta}}\|x^0 - x^*\|^2,& \beta\in (0,2),\\
                %\frac{(L_1 + L_2)}{K^{2-\beta}}\|x^0 - x^*\|^2, & \beta = [1,2),\\
                \frac{\tilde{\alpha}_1}{\log(K+1)}\|x^0 - x^*\|^2,& \beta = 2.%\\
                %(\beta - 2)(L_1+L_2)\|x^0 - x^*\|^2, & \beta >2
            \end{cases}
        \end{equation*}
            \item \; \vspace{-1.5em} \begin{equation*}
         \varphi(\bar{x}^K) - \varphi(x^*)\leq\begin{cases}
         \frac{16\tilde{\alpha}_2}{(1-\beta)K^{\beta}},& \beta\in [0,1),\\      \frac{8\tilde{\alpha}_2(1+\log(K))}{K}, &\beta = 1,\\
         \frac{4(2\beta-1)\tilde{\alpha}_2}{(\beta-1)K^{2-\beta}}, &\beta = (1,2),\\\frac{2(4\tilde{\alpha}_3+\tilde{\alpha}_4)}{\log(K+1)}, & \beta = 2.
            \end{cases}
        \end{equation*}
        \end{enumerate}
       % {where $\tilde{\alpha}_1 =(L_1+L_2)$, $\tilde{\alpha}_2= 2(L_1+L_2)\|x^0 - x^*\|^2 + \frac{8}{2-\beta}(\omega^* - \ubar{\omega})$, $\tilde{\alpha}_3 = 2(L_1+L_2)\|x^0 - x^*\|^2 + 8(\omega^* - \ubar{\omega})$ and $\tilde{\alpha}_4=8(\omega^* - \ubar{\omega})$}
       % if $t_k$ is chosen as a constant step size, and 
\end{theorem}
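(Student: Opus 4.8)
The plan is to derive both bounds from the recursive estimate in Lemma~\ref{lem:rec_apg}, convexity of the ergodic iterate $\bar x^K$, and the elementary estimates on $\{s_k\}$ and on power sums (Lemmas~\ref{lem:seq} and~\ref{lem:sum_bounds}). I will carry out the constant step size case (a) and indicate where the backtracking case (b), relegated to the appendix, differs. The algebraic fact I will rely on throughout is that, for the weights in case (a), summation by parts together with $s_{k-1}^2 - s_{k-2}^2 = s_{k-1}$ (and $s_0^2 = s_0 = 1$) gives
\begin{equation*}
\Pi_K := \sum_{k=1}^K \pi_k = \sum_{k=1}^{K-1} s_{k-1}^2(\sigma_k - \sigma_{k+1}) + \sigma_K s_{K-1}^2 = \sum_{k=1}^K s_{k-1}\sigma_k .
\end{equation*}
Since $s_{k-1}\ge (k+1)/2$, Lemma~\ref{lem:sum_bounds}(ii) then yields $\Pi_K \ge \tfrac14 K^{2-\beta}$ for $\beta\in(0,2)$ and $\Pi_K \ge \tfrac12\log(K+1)$ for $\beta=2$; moreover each $\pi_k\ge 0$, so $\bar x^K$ is an honest convex combination of $x^1,\dots,x^K$.

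For (i): if $\omega(\bar x^K)<\omega^*$ there is nothing to prove, so assume $\omega(\bar x^K)\ge\omega^*$, whence convexity of $\omega$ gives $\omega(\bar x^K)-\omega^* \le \Pi_K^{-1}\sum_{k=1}^K \pi_k z^k$. Rearranging Lemma~\ref{lem:rec_apg}(i), the negative of the $z^k$-sum on its right-hand side is exactly $\sum_{k=1}^K \pi_k z^k$, and discarding the two nonnegative terms $\tfrac{1}{2t_K}\norm{u^K}^2$ and $s_{K-1}^2 v^K$ leaves $\sum_{k=1}^K \pi_k z^k \le \tfrac{L_1+L_2}{2}\norm{x^0-x^*}^2$. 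Dividing by $\Pi_K$ and inserting its lower bound gives the stated rate with $\tilde\alpha_1 = L_1+L_2$.

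For (ii): convexity of $\varphi$ gives $\varphi(\bar x^K)-\varphi^* \le \Pi_K^{-1}\sum_{k=1}^K \pi_k v^k$, so everything reduces to bounding $\sum_k \pi_k v^k$. Here I feed in the per-iterate feasibility bound of Proposition~\ref{prop:vbound}; absorbing its $(k+1)^{-2}$ term into the $(k+1)^{-\beta}$ term (legitimate since $\beta\le2$) it reads $v^k \le \tilde\alpha_2 (k+1)^{-\beta}$ for $\beta\in(0,2)$ and $v^k \le (\tilde\alpha_3 + \tilde\alpha_4\log k)(k+1)^{-2}$ for $\beta=2$. On the other side, for $k<K$ one has $\pi_k = s_{k-1}^2(\sigma_k-\sigma_{k+1}) \le k^2\cdot\beta k^{-\beta-1} = \beta k^{1-\beta}$ (from $s_{k-1}\le k$ and $\sigma_k-\sigma_{k+1}\le \beta k^{-\beta-1}$, by the mean value theorem applied to $t\mapsto t^{-\beta}$), and $\pi_K = \sigma_K s_{K-1}^2 \le K^{2-\beta}$. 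Thus for $\beta\in(0,2)$, $\sum_k \pi_k v^k \le \beta\tilde\alpha_2 \sum_{k=1}^{K-1} k^{1-2\beta} + \tilde\alpha_2 K^{2-2\beta}$; bounding the power sum via Lemma~\ref{lem:sum_bounds}(i) (applied with exponent $2\beta$) splits into $\beta<1$ (the sum grows like $K^{2-2\beta}$, so after dividing by $\Pi_K\ge\tfrac14 K^{2-\beta}$ one gets $\mathcal{O}(K^{-\beta})$), $\beta=1$ (growth $1+\log K$, giving $\mathcal{O}(K^{-1}\log K)$), and $\beta\in(1,2)$ (bounded sum, giving $\mathcal{O}(K^{-(2-\beta)})$), with the constants as claimed after the estimate $(K+1)^{2-2\beta}\le 4K^{2-2\beta}$. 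For $\beta=2$, using instead $\pi_k\le 2(k+1)^{-1}$ for $k<K$ and $\pi_K\le 1$, the sum $\sum_k\pi_k v^k$ is dominated by $2\tilde\alpha_3\sum_{k<K}(k+1)^{-3} + \tilde\alpha_4\bigl(\sum_{k<K}\tfrac{2\log k}{(k+1)^3}+\tfrac{\log K}{(K+1)^2}\bigr)$ plus vanishing terms, which Lemma~\ref{lem:sum_bounds}(iii) bounds by a constant; dividing by $\Pi_K\ge\tfrac12\log(K+1)$ gives the $\mathcal{O}(1/\log K)$ rate.

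The main obstacle is the estimate of $\sum_k\pi_k v^k$ in (ii): the crude bound $\sum_k\pi_k v^k \le (\max_k v^k)\,\Pi_K$ is far too lossy --- it leaves a non-vanishing term --- so one must keep the exact $k$-dependence of both $v^k$ and $\pi_k$ and carefully track the resulting power sums across all four regimes of $\beta$, noting in particular that the endpoint weight $\pi_K v^K$ is of the same order as the running sum yet does not worsen the rate, and that for $\beta=2$ both numerator and denominator collapse to logarithmic size. The backtracking case (b) runs along the identical template, with Lemma~\ref{lem:rec_apg}(ii) in place of Lemma~\ref{lem:rec_apg}(i), the identity $\Pi_K = \sum_k s_{k-1}\sigma_k t_k$, and the two-sided bound $1/\tilde\alpha_1 \le t_k \le \bar t$ on the step sizes as the only additional input; this computation is deferred to the appendix.
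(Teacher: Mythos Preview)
The proposal is correct and follows essentially the paper's route: Lemma~\ref{lem:rec_apg} together with the Abel-summation identity $\Pi_K=\sum_{k=1}^K s_{k-1}\sigma_k$ for part~(i), and Proposition~\ref{prop:vbound} combined with the power-sum bounds of Lemma~\ref{lem:sum_bounds} for part~(ii). Your one variation---bounding $\pi_k\le\beta k^{1-\beta}$ directly via the mean-value theorem rather than performing the paper's second Abel summation on $\sum_k\pi_k v^k$---leads to the same $\sum_k k^{1-2\beta}$ estimate and hence the same rates, though be aware that for $\beta\in(3/2,2)$ the constants you obtain this way are slightly larger than those stated, so matching the exact constants would require reverting to the paper's summation-by-parts step at that point.
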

\begin{proof}
As mentioned above, we give the proof for the case where $t_k$ is chosen to be a constant step size, the proof for the case of backtracking is given in the Appendix.
\begin{enumerate}[label=(\roman*)]
     \item By the convexity of $\omega$ and from Lemma \ref{lem:rec_apg}(i) we have that
    \begin{eqnarray}
(\sum_{k=1}^{K}\pi_k)(\omega(\bar{x}^K) - \omega^*)&\leq&\sum_{k=1}^{K}\pi_k((\omega(x^k) - \omega^*) =\sum_{k=1}^{K}\pi_kz^k\nonumber\\
&\leq &\frac{L_1+L_2}{2}\norm{x^0-x^*}^2-\frac{1}{2t_{K}}\|u^{K+1}\|^2 - s^2_{K-1}v^{K}\nonumber\\
    &\leq&\frac{L_1+L_2}{2}\norm{x^0-x^*}^2.\label{2.34}
    \end{eqnarray}
Observing that
    \begin{eqnarray}
      \sum_{k=1}^{K}\pi_k &=& \sum_{k=1}^{K-1}s^2_{k-1}(\sigma_{k} - \sigma_{k+1}) + \sigma_Ks^2_{K-1}\nonumber\\
      &\geq& \sum_{k=1}^{K-1}(\frac{k+1}{2})^2(\frac{1}{k^\beta}-\frac{1}{(k+1)^\beta})+\frac{(K+1)^2}{4K^\beta}\nonumber\\
      &=& 1+\sum_{k=2}^{K} \frac{1}{4k^\beta}((k+1)^2-k^2)\nonumber\\  &\geq&\sum_{k=1}^{K}\frac{2k+1}{4k^\beta}\geq \sum_{k=1}^{K}\frac{k^{1-\beta}}{2}%\nonumber\\
      \geq\begin{cases}
       \frac{K^{2-\beta}}{4}, &\beta\in[0,2), \\
        \frac{1}{2}\log (K+1), & \beta=2,%\\
        %\frac{1}{2(\beta-2)} &\beta>2
   \end{cases}\label{2.33}
    \end{eqnarray}
        where \eqref{2.33} 
 follows from Lemma \ref{lem:sum_bounds}(ii). Substituting \eqref{2.33} in \eqref{2.34} gives the desired result.
\item Proposition \ref{prop:vbound} implies that for all $k\in \N$, $\varphi(x^k) - \varphi^*\leq \frac{\tilde{\alpha_2}}{(k+1)^\beta}$ if $\beta\in(0,2)$, and $\varphi(x^k) - \varphi(x^*)\leq \frac{\tilde{\alpha}_3+\tilde{\alpha}_4\log(k)}{(k+1)^2}$ if $\beta = 2$. Thus, by the convexity of $\varphi$, the definition of $\pi_k$, and $s_{k-1}\leq k$ from   Lemma~\ref{lem:seq}, we obtain that for $\beta\in(0,2)$
    \begin{eqnarray}
    \varphi(\bar{x}^K) - \varphi(x^*)&\leq&\frac{1}{\sum_{k=1}^{K}\pi_k} \sum_{k=1}^{K}\pi_k(\varphi(x^k) - \varphi(x^*)) \nonumber\\
    &\leq& \frac{4\tilde{\alpha}_2}{K^{2-\beta}}\left(\sum_{k=1}^{K-1}\frac{k^2}{(k+1)^\beta}\bigg(\frac{1}{k^\beta} - \frac{1}{{(k+1)}^\beta}\bigg) + \frac{K^2}{K^{\beta}(K+1)^\beta}\right)\nonumber\\
    &= & \frac{4\tilde{\alpha}_2}{K^{2-\beta}}\sum_{k=1}^{K}\frac{1}{k^\beta}\big(\frac{k^{2}}{(k+1)^\beta} - \frac{(k-1)^{2}}{k^\beta}\big)\nonumber\\
    &\leq& \frac{4\tilde{\alpha}_2}{K^{2-\beta}} \sum_{k=1}^{K}\left(\frac{k^{2}}{k^{2\beta}} - \frac{(k-1)^{2}}{k^{2\beta}}\right)%\nonumber\\
    %&=& \tilde{\alpha}_2\sum_{k=1}^{K}\frac{2k-1}{k^{2\beta}}
    \leq \frac{8\tilde{\alpha}_2}{K^{2-\beta}}\sum_{k=1}^{K}k^{1-2\beta}.\label{2.34a}
   % &\leq&\begin{cases}
%\frac{8\tilde{\alpha}_2(K+1)^{2-2\beta}}{(1-\beta)K^{2-\beta}}\leq \shim{\frac{24\tilde{a_2}}{(1-\beta)K^{\beta}}}, & \beta\in(0,1),\\
%\frac{8\tilde{\alpha}_2(1 + \log (K))}{K^{2-\beta}}, & \beta=1,\\
%{\frac{4\tilde{\alpha}_2(2\beta - 1)}{K^{2-\beta}(\beta - 1)}}, & \beta\in(1,2),
%\end{cases}\label{2.34b}
    \end{eqnarray} 
    %where the last inequality is due to Lemma \ref{lem:sum_bounds}(i).
    Applying Lemma \ref{lem:sum_bounds}(i) {and $(K+1)^{2-2\beta}\leq 4K^{2-2\beta}$} to \eqref{2.34a} we obtain (ii) for $\beta\in(0,2)$.
   % Dividing \eqref{2.34b} by $\sum_{k=1}^K\pi_k$ and applying \eqref{2.33}, we obtain (ii) for $\beta\in(0,2)$.
    %\begin{equation}
    %     \varphi(\bar{x}^K) - \varphi(x^*)\leq\begin{cases}
    %            \frac{8(2-\beta)\tilde{\alpha}_2}{(1-\beta)K^{\beta}},& \beta\in [0,1),\\         \frac{4\tilde{\alpha}_2(1+\log(K))}{K}, &\beta = 1,\\
    %            \frac{2(2\beta-1)\tilde{\alpha}_2}{(\beta-1)K^{2-\beta}}, &\beta = (1,2).
            %\end{cases}\label{phibound}
      %  \end{equation}
    Similarly, when $\beta=2$, applying the definition of $\pi_k$ and $s_{k-1}\leq k$ from Lemma~\ref{lem:seq}, we obtain
    \begin{eqnarray}
    (\sum_{k=1}^{K}\pi_k)(\varphi(\bar{x}^K) - \varphi(x^*))&\leq& \sum_{k=1}^{K}\pi_k(\varphi(x^k) - \varphi(x^*))\nonumber\\
    &\leq& \tilde{\alpha}_3\left(\sum_{k=1}^{K-1} \frac{k^2}{(k+1)^2}\bigg(\frac{1}{k^2} - \frac{1}{{(k+1)}^2}\bigg) + \frac{1}{(K+1)^2}\right)\nonumber\\
    & + & \tilde{\alpha}_4\left(\sum_{k=1}^{K-1}\frac{k^2\log k}{(k+1)^2}\bigg(\frac{1}{k^2} - \frac{1}{{(k+1)}^2}\bigg) + \tilde{\alpha}_4\frac{\log (K)}{(K+1)^2}\right).\label{vgenr}
    \end{eqnarray}
    Bounding the terms multiplying $\tilde{\alpha}_4$, we have
    \begin{eqnarray}
        \sum_{k=1}^{K-1}\frac{k^2\log k}{(k+1)^2}\bigg(\frac{1}{k^2} - \frac{1}{{(k+1)}^2}\bigg) + \frac{\log (K)}{(K+1)^2}
        &\leq& \sum_{k=1}^{K-1}\frac{(2k+1)\log k}{(k+1)^4}+\frac{\log(K)}{(K+1)^2}\nonumber\\
        &\leq& \sum_{k=1}^{K-1}\frac{2\log(k)}{(k+1)^3}+\frac{\log(K)}{(K+1)^2}\leq 1,\nonumber%\label{vC3} 
        \end{eqnarray}
    where we have used Lemma \ref{lem:sum_bounds}(iii) to obtain the last inequality.
   As for the multipliers of $\tilde{\alpha}_3$, using Lemma~\ref{lem:sum_bounds}(i) for $\beta=4$ results in
     \begin{eqnarray}
     \sum_{k=1}^{K-1}\hspace{-2pt}\frac{k^2}{(k+1)^2}\bigg(\frac{1}{k^2}\hspace{-2pt} - \hspace{-2pt}\frac{1}{{(k+1)}^2}\bigg)\hspace{-2pt} %+\hspace{-2pt} \frac{1}{(K+1)^2} \hspace{-2pt}
     =\hspace{-2pt}\sum_{k=1}^{K-1}\frac{(2k+1)}{(k+1)^4}%+\frac{1}{(K+1)^2}
     \leq 2\sum_{k=1}^{K+1} k^{-3} \leq 3 \nonumber,
     \end{eqnarray}
   Plugging these bounds together with $1/(K+1)^2\leq 1$ %\eqref{vC3} in 
    into \eqref{vgenr}, we get
      \begin{equation}
    (\sum_{k=1}^{K}\pi_k)(\varphi(\bar{x}^K) - \varphi(x^*))
     \leq  4\tilde{\alpha}_3 + \tilde{\alpha}_4,\label{ratev6} 
     \end{equation}
    which together with \eqref{2.33} produces the desired result for this case.
\end{enumerate}      
\end{proof}
\remark{Similarly to Theorem~\ref{them:conv_rate_IREPG}, Theorems~\ref{thm:rate_apg_constant_step} does not require  the boundedness of $\ubar{X}$ in Assumption~\ref{ass:omega_min}. \revise{However, as mentioned in Remark~\ref{rem:thm_IRE_conv}, Assumption~\ref{ass:omega_min} guarantees that any limit point of the sequence $\{\bar{x}^k\}_{k\in\N}$ is in $\ubar{X}$.}}

Theorems~\ref{thm:rate_apg_constant_step} 
provides a convergence rate of $\mathcal{O}(k^{-\beta})$ with respect to the inner function and $\mathcal{O}(k^{2-\beta})$ with respect to the outer function for $\beta\in(0,1)$ with an additional $\log(k)$ term for the inner function when $\beta=1$. Note that analogously to the result of Theorem~\ref{them:conv_rate_IREPG}, in this range the convergence rate of the outer function is always faster than  that of the inner function. These rates are also identical to the simultaneous convergence rate for  {problem \eqref{Bilevel}} obtained in \cite{merchav2024} for the case of $\beta=1$ which {were} derived for a variation of \ref{alg:IRE-APG} in which $s_k=k+a$ for some $a\geq 2$. The additional benefit of these results is they elucidate the trade-offs between the two convergence rates that can be simultaneously obtained for the inne and outer functions for different $\beta$ values.

%Unfortunately, it seems that the difference {in} the analysis between the \ref{alg:IRE-PG} and \ref{alg:IRE-APG} does not allow to improve this result in line with Theorem~\ref{thm:conv_rate_IREPG2}. Thus, despite \ref{alg:IRE-APG} allowing for a superior trade-off between the convergence rate of the inne and outer functions compared to \ref{alg:IRE-PG} when $\beta\in(0,1]$, we can not obtain the same trade-off when the sum of $\varphi$ and $\omega$ does not admit a proximal friendly composite structure. We summarize this in the following result, which is a corollary of Proposition~\ref{prop:interpert_rate} and Theorems~\ref{thm:rate_apg_constant_step}.

\begin{cor}\label{cor:apg}
    Let $\{w^k\equiv(x^k,p^k)\}_{k\in\N}$ the sequence generated by \ref{alg:IRE-APG} applied to problem~\eqref{eq:BLO_surrogate}. For any $k\in\N$ let
    $\{\bar{w}^k=(\bar{x}^k,\bar{p}^k)\}_{k\in\N}$ be the ergodic sequence as defined in Theorem~\ref{thm:rate_apg_constant_step}. Suppose $\sigma_k={k}^{-1}$ and $\omega$ is coercive.
    If $\omega$ is additionally real valued, then there exists constants $\theta_1>0$ and $\theta_2>0$ such that for any $k\in \N$ %\ade{ $k\geq 2$}, 
 \begin{align*}
     \omega(\bar{x}^k)-\omega^*\leq \frac{\theta_1(1+\log(k))^{1/2}}{k^{1/2}}%\frac{D^2(\alpha_1+\alpha_2)}{2^{1/3}K^{1/3}}
     ,\quad \varphi(\bar{x}^{k})-\varphi^*\leq \frac{\theta_2(1+\log(k))}{k}.
 \end{align*}
 Alternatively, if $\varphi$ is real valued, then there exist constants $\theta_3>0$ and $\theta_4>0$ such that for any $k\in \N$ %\ade{ $k\geq 2$}, 
 \begin{align*}
     \omega(\bar{p}^k)-\omega^*\leq \frac{\theta_3}{k},\quad \varphi(\bar{p}^{k})-\varphi^*\leq \frac{\theta_4(\log(k)+1)^{1/2}}{k^{1/2}}.
 \end{align*}
\end{cor}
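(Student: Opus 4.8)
The plan is to obtain the statement by specializing Theorem~\ref{thm:rate_apg_constant_step} with $\beta=1$ to the surrogate bi-level problem~\eqref{eq:BLO_surrogate} and then translating the resulting guarantees through Proposition~\ref{prop:interpert_rate}. As a preliminary step I would recall the facts already established in Section~\ref{sec:rate_translate}: the pair $(\tilde\varphi,\tilde\omega)$ is composite and satisfies Assumptions~\ref{ass:composite} and~\ref{ass:omega_min}, the nonsmooth part of $\tilde\omega$ is $p\mapsto g_1(p)$ and that of $\tilde\varphi$ is $x\mapsto g_2(x)$, and since $f_1$ is real-valued, $g_1$ is real-valued precisely when $\omega$ is, and likewise $g_2$ is real-valued precisely when $\varphi$ is; moreover, coercivity of $\omega$ (assumed in the corollary) is exactly what makes the level sets $S_p$ and $S_x$ in Proposition~\ref{prop:interpert_rate} compact. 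Thus \ref{alg:IRE-APG} is applicable to~\eqref{eq:BLO_surrogate} with $w^0=(x^0,x^0)$, and because the ergodic weights $\pi_k/\sum_l\pi_l$ act blockwise, its ergodic iterate decomposes as $\bar w^K=(\bar x^K,\bar p^K)$ with $\bar x^K,\bar p^K$ the corresponding ergodic averages of the blocks of $w^k=(x^k,p^k)$, and $\tilde\varphi(\bar w^K)=\varphi(\bar x^K)+\tfrac{\gamma}{2}\|\bar x^K-\bar p^K\|^2$, $\tilde\omega(\bar w^K)=\omega(\bar p^K)$.

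Next I would apply Theorem~\ref{thm:rate_apg_constant_step}(i)--(ii) with $\beta=1$ (for either step-size rule) to problem~\eqref{eq:BLO_surrogate}, which yields constants $C_\omega,C_\varphi>0$, depending only on $\tilde\alpha_1,\tilde\alpha_2$, $\Delta_\omega$ and $\|w^0-w^*\|^2=2\|x^0-x^*\|^2$ (here $w^*=(x^*,x^*)$), such that
\[
\tilde\omega(\bar w^K)-\omega^*\le \frac{C_\omega}{K}=:\theta_\omega(K),
\qquad
\tilde\varphi(\bar w^K)-\varphi^*\le \frac{C_\varphi(1+\log K)}{K}=:\theta_\varphi(K).
\]
Both $\theta_\omega$ and $\theta_\varphi$ are nonnegative, nonincreasing on $\N$, and tend to $0$, so the hypotheses of Proposition~\ref{prop:interpert_rate} hold with $\{\bar w^K\}_{K\in\N}$ playing the role of the algorithm's output.

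Finally I would invoke Proposition~\ref{prop:interpert_rate}. If $\omega$ is real-valued, then $g_1$ is real-valued, so part~\ref{prop:interpert_rate_i} gives $\varphi(\bar x^K)-\varphi^*\le\theta_\varphi(K)$ and part~\ref{prop:interpert_rate_iii} gives $\omega(\bar x^K)-\omega^*\le\theta_\omega(K)+\ell_1\sqrt{(2/\gamma)\theta_\varphi(K)}$, with $\ell_1$ the Lipschitz modulus of $\omega$ on the compact set $\lev{\omega}{\omega^*+\theta_\omega(1)}$. Since $1/K\le 1+\log K$ for all $K\in\N$, we get $\theta_\omega(K)\le C_\omega\sqrt{(1+\log K)/K}$, hence the right-hand side is $\le\theta_1\sqrt{(1+\log K)/K}$ with $\theta_1:=C_\omega+\ell_1\sqrt{2C_\varphi/\gamma}$, while $\varphi(\bar x^K)-\varphi^*\le\theta_2(1+\log K)/K$ with $\theta_2:=C_\varphi$; this is the first display. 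The case $\varphi$ real-valued is symmetric: $g_2$ is real-valued, part~\ref{prop:interpert_rate_i} gives $\omega(\bar p^K)-\omega^*\le\theta_\omega(K)=\theta_3/K$ with $\theta_3:=C_\omega$, and part~\ref{prop:interpert_rate_iii} gives $\varphi(\bar p^K)-\varphi^*\le\theta_\varphi(K)+\ell_2\sqrt{(2/\gamma)\theta_\varphi(K)}\le\theta_4\sqrt{(1+\log K)/K}$ with $\theta_4:=C_\varphi+\ell_2\sqrt{2C_\varphi/\gamma}$, which is the second display. The only point requiring care is this last bookkeeping — confirming that the $\mathcal O(1/K)$ surrogate error is dominated by the $\mathcal O(\sqrt{\log K/K})$ contribution arising from the feasibility residual $\|\bar x^K-\bar p^K\|$ — since all the substantive analysis is already carried out in Theorem~\ref{thm:rate_apg_constant_step} and Proposition~\ref{prop:interpert_rate}, so I do not anticipate any genuine obstacle.
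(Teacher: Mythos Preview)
Your proposal is correct and follows exactly the approach the paper indicates: it states that Corollary~\ref{cor:apg} ``is a corollary of Proposition~\ref{prop:interpert_rate} and Theorems~\ref{thm:rate_apg_constant_step}'' and gives no further details, and you have filled in precisely those details by specializing Theorem~\ref{thm:rate_apg_constant_step} with $\beta=1$ to problem~\eqref{eq:BLO_surrogate} and then invoking Proposition~\ref{prop:interpert_rate}\ref{prop:interpert_rate_i}--\ref{prop:interpert_rate_iii}. The bookkeeping you flag --- that the $\mathcal O(1/K)$ surrogate term is absorbed by the dominant $\mathcal O(\sqrt{(1+\log K)/K})$ term via $1/K\le 1+\log K$ and $(1+\log K)/K\le 1$ on $\N$ --- is exactly what is needed and presents no difficulty.
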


\revise{Theorem~\ref{thm:conv_rate_IREPG2} shows that IRE-PG achieves a trade-off where the sum of the inner and outer exponents of $k$ equals $-1$, which is preserved with surrogate functions (Corollary~\ref{cor:ire_pg}). In contrast, IRE-APG accelerates this trade-off to $-2$ (Theorem~\ref{thm:rate_apg_constant_step}), but with surrogates (Corollary~\ref{cor:apg}) the sum improves only to $-1.5$. This limitation occurs because the accelerated trade-off in Theorem~\ref{thm:rate_apg_constant_step} is valid only for $\beta\in(0,1]$; extending this trade-off to $\beta\in(0,2]$ would allow full acceleration, but this is prevented by the difference between the analyses of the two algorithms.
} %Theorem \ref{thm:conv_rate_IREPG2} shows that IRE-PG achieves a trade-off where the sum of the inner and outer exponents of k equals $-1$, and this trade-off is preserved when using surrogate functions, as shown in Corollary~\ref{cor:ire_pg}. In contrast, Theorem \ref{thm:rate_apg_constant_step} establishes that IRE-APG accelerates the trade-off to a total exponent sum of $-2$. However, when surrogate functions are used, Corollary~\ref{cor:apg} shows that the acceleration is only partial: the sum of the exponents improves to $-1.5$, falling short of the full acceleration in Theorem~\ref{thm:rate_apg_constant_step}. This limitation arises because the trade-off between outer and inner function in Theorem~\ref{thm:rate_apg_constant_step} is only valid for $\beta\in(0, 1]$. Extending it to $\beta\in(0,2]$ would allow the superior acceleration to carry over to the surrogate case; however, due to the differences in the analyses of IRE-PG and IRE-APG, we were unable to achieve this extension.} 
We note that Corollary~\ref{cor:apg} does not exclude obtaining better rates for this case. Specifically, under additional assumptions on the functions, such as  $\varphi$ admiting an H\"{o}lderian error bound, as shown in \cite{merchav2024}, or norm-like $\omega$, as shown in \cite{doronshtern2023} superior rates may be attained.

\section{\revise{Numerical Experiments}}\label{sec:numetical_experiments}
\revise{In order to validate and demonstrate our theoretical results in the previous sections, we take a simple example of the form of the motivating example discussed in Section~\ref{sec:problem_plus_example}.}
\revise{Specifically, for some even integer $n$ we choose a signal $x\in\R^n$ such that its first $n/2$ components are equal to $-0.5$ and its last $n/2$ components are equal to $0.5$.
Given a matrix $A\in \R^{n/2,n}$, which components are uniformly generated in $\{-1,0,1\}$, we produce a vector $y=Ax+\eta$ where $\eta\in\R^{n/2}$ is a noise vector which is sampled uniformly over a ball of radius $\tau$.
Assume that we know that 
$x\in C=[-1,1]^n$ and $\norm{Ax-y}\leq \tau$
and so our inner function is of the form of \eqref{eq:example_phi}.
Moreover, for the outer function we use
$\omega=\norm{Sx}_1$ where $S\in \R^{(n-1)\times n}$ is the matrix  associated with the total variation \cite{rudin_nonlinear_1992}, to promote signals with few `jumps'. While $\omega$ is nonsmooth and not proximal-friendly, its structure allows us to use surrogate functions, as described in Remark~\ref{rem:lifting}.
Thus, in the following results we refer to the surrogate functions on the lifted variable $w=(x,p)\in \R^{2n-1}$ given by 
$\tilde{\omega}(w)=\norm{p}_1$ and
$\tilde{\varphi}(w)=\varphi(x)+\frac{\rho}{2}\norm{Sx-p}^2$. For comparison purposes, we also computed the optimal solution $x^*$ with values $\varphi^*=0$ and $\omega^*=0.997$ for this problem using CVXPy with its default solver \cite{diamond2016cvxpy}.}

\begin{figure}[t]\caption{\revise{Performance of  sequences generated by IRE-PG and IRE-APG for different values of $\beta$}} \label{fig:All_Results}
\includegraphics[width=\textwidth]{./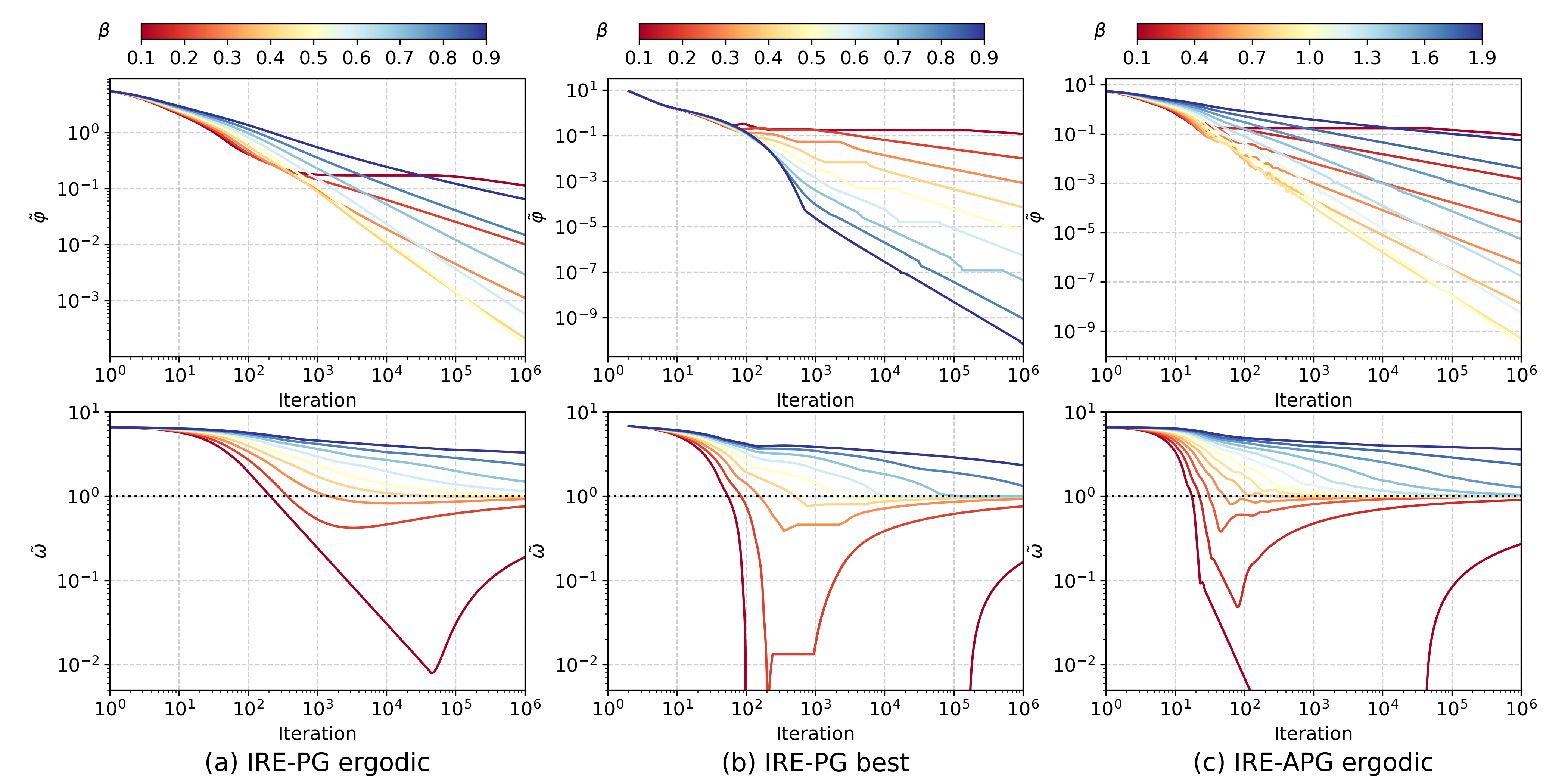}\vspace{-10pt}
\end{figure}

\revise{
We set $\rho=1$ and ran both IRE-PG for $\beta\in \{0.1,0.2,\ldots,0.9\}$ and IRE-APG for $\beta\in \{0.1,0.25,\ldots,1.9\}$; the results %The results of these experiments 
are presented in Figures~\ref{fig:All_Results}. The first column depicts the ergodic performance of IRE-PG (i.e, the performance of the sequence $\{\bar{x}^k\}_{k\in\N}$). As expected, the convergence rate of $\tilde{\omega}$ to $\omega^*$ (depicted by the dotted line) improves as $\beta$ increases,%, taking less time to reach values smaller or equal to $\omega^*$, depicted by the dotted line. In contrast, 
while the rate of convergence for $\tilde{\varphi}$ increases up to $\beta=0.5$ and then decreases again, validating %our theoretical convergence rates result in 
Theorem~\ref{them:conv_rate_IREPG}. 
The ergodic performance of IRE-APG, displayed in the third column, has similar behavior
%Similarly, the third column displays similar behavior for IRE-APG, again showing that the convergence rate of $\tilde{\omega}$ improves as $\beta$ decreases, while the rate for $\tilde{\varphi}$ improves only until $\beta=1$ and then becomes slower, 
validating Theorem~\ref{thm:rate_apg_constant_step}. Moreover, comparing the two methods the superior trade-off of IRE-APG is apparent. %Comparing the two figures, we observe the effect of acceleration: IRE-PG with $\beta = 0.5$ attains optimality gaps of approximately $0.002$ for $\tilde{\varphi}$ and $0.03$ for $\tilde{\omega}$ after $10^6$ iterations, whereas IRE-APG with $\beta = 1$ attains lower optimality gaps for both functions within fewer than $4 \cdot 10^3$ iterations.
The second column illustrates the performance of the sequence $x^{K^*}$ for IRE-PG. In contrast to the ergodic performance, we observe here a complete trade-off between the convergence rates of $\tilde{\varphi}$ and $\tilde{\omega}$ for $\beta \in (0,1)$. Specifically, the convergence rate of $\tilde{\varphi}$ improves as $\beta$ increases, confirming Theorem~\ref{thm:conv_rate_IREPG2}. Moreover, the ‘best’ sequence consistently outperforms the ergodic sequence for both functions across all $\beta$.}
%Furthermore, the function values of both $\tilde{\varphi}$ and $\tilde{\omega}$ for this ‘best’ sequence appear to improve upon those of the ergodic sequence across all values of $\beta$.} 

\revise{Next, we examine how the surrogate function convergence rates translate to those of the original functions $\varphi$ and $\omega$ for various $\rho$ values. We consider three sequences: the ergodic IRE-PG sequence with $\beta=0.5$, the ‘best’ IRE-PG sequence with $\beta=2/3$, and the ergodic IRE-APG sequence with $\beta=1$, with $\beta$ chosen in each case to optimize the trade-off between the inner and outer functions. Figure~\ref{fig:diff_rho} reports the values of $\norm{Sx-p}$ and the optimality gaps $\varphi-\varphi^*$ and $\omega-\omega^*$ for the three sequences. As expected from Remark~\ref{rem:rho_influence}, , $\norm{Sx-p}$ decreases faster for all methods as $\rho$ increases, while the optimality gap of $\varphi$ grows due to its larger Lipschitz constant.%$\norm{Sx-p}$ decreases more rapidly for all methods as $\rho$ increases, while the optimality gap of $\varphi$ grows due to its larger Lipschitz constant. 
Increasing $\rho$ also slows the convergence of $\tilde{\omega}$, but when mapped to $\omega(Sx)$ it benefits from the smaller difference between $Sx$ and $p$.
%The convergence of $\tilde{\omega}$ is similarly slowed by increasing $\rho$, but when translated to $\omega(Sx)$ it benefits from the smaller difference between $Sx$ and $p$. 
Consequently, although $\rho=10$ yields the slowest convergence, the difference between $\rho=0.1$ and $\rho=1$ is less significant, with $\rho=1$ eventually achieving the lowest optimality gap for $\omega$ across all methods.}

\begin{figure}[t]\caption{\revise{Performance of IRE-PG and IRE-APG for different values of $\rho$}}\label{fig:diff_rho}
\begin{center}
\includegraphics[width=\textwidth]{./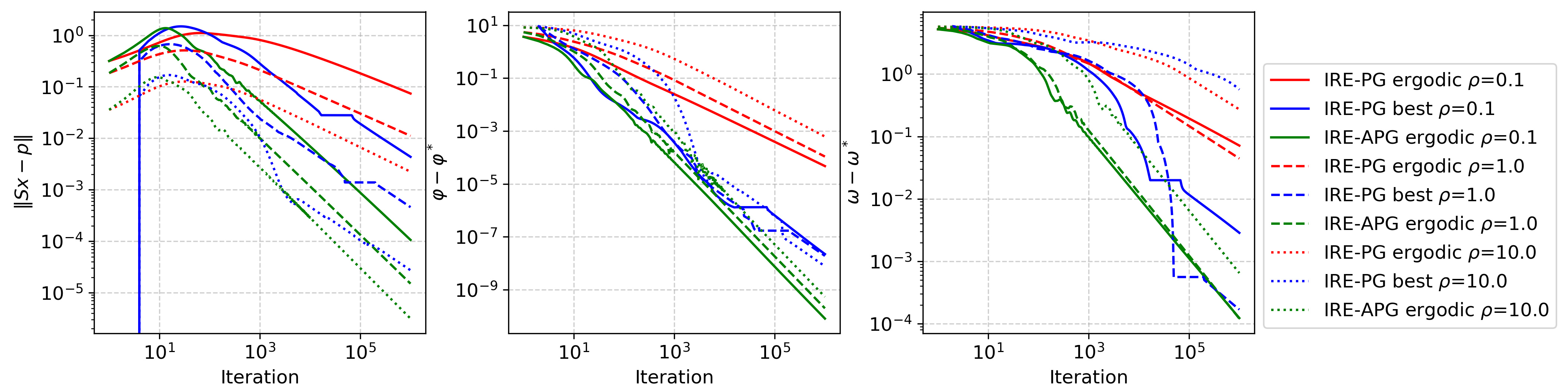}
\end{center}\vspace{-10pt}
\end{figure}
\section{Conclusion}\label{sec:conclusions}
In this paper, we explore two  algorithms \ref{alg:IRE-PG} and \ref{alg:IRE-APG} for solving simple bi-level problems, inspired by Solodov's \revise{first-order} iterative regularization method. For both algorithms, we prove a simultaneous rates of convergence for the inner and outer functions, a trade-off that depends on the choice of the regularization parameter. We demonstrate that in cases where the algorithms cannot be applied directly to the original bi-level problem, an equivalent surrogate problem can be solved instead. While the trade-off for \ref{alg:IRE-PG} extends naturally to this surrogate problem, the same does not hold for \ref{alg:IRE-APG}. It remains an open question whether the accelerated trade-off obtained by \ref{alg:IRE-APG} can be achieved by any method without additional assumptions on the problem structure. Additionally, it would be interesting to investigate whether convergence rate results to stationary points could be obtained for either method if 
 $\varphi$ or $\omega$  are not convex but retain a composite structure. We plan to explore these research questions in future work.

\bibliographystyle{plain}
\bibliography{../iucr_arxiv}

\appendix
\section{Proof of Lemma~\ref{lem:sum_bounds}}
\begin{proof}%[Proof of Lemma~\ref{lem:sum_bounds}]
\begin{enumerate}[label=(\roman*)]
\item We consider different cases for $\beta$.\\ \underline{Case I}: $\beta\in [0,1)$. The function $k^{1-\beta}$ is monotonically increasing in $k$ 
   \begin{eqnarray*}
       \sum_{k=1}^{K}k^{1-\beta}&\leq& \int_1^{K+1}x^{1-\beta}dx %\nonumber\\
       %&=&  
       =\frac{(K+1)^{2-\beta}-1}{2-\beta}\leq \frac{(K+1)^{2-\beta}}{2-\beta}%\leq \frac{4K^{2-\beta}}{2-\beta} .\label{case1}
       %&\leq& \frac{(K+1)^{2-\beta}}{2-\beta}.\label{case1}
       \end{eqnarray*}
      \underline{Case II}: $\beta\in [1,2)$. The function $k^{1-\beta}$ is nonincreasing in $k$ and so   
\begin{eqnarray*}
       \sum_{k=1}^{K}k^{1-\beta}
       \leq (1 + \int_1^{K}x^{1-\beta}dx)=%\nonumber\\
       %&=& 
       (1+\frac{K^{2-\beta} - 1}{2-\beta})%\nonumber\\
       &\leq& \frac{K^{2-\beta}}{2-\beta} 
       \leq \frac{(K+1)^{2-\beta}}{2-\beta}
       .\label{case3}
   \end{eqnarray*}
   \underline{Case III}: $\beta = 2$. Since $k^{-1}$ is decreasing in $k$ we have 
   \begin{eqnarray*}
       \sum_{k=1}^{K}k^{1-\beta}&=& \sum_{k=1}^{K}k^{-1}%\nonumber\\
       %&\leq &
       \leq 1+\int_{1}^K x^{-1}dx
       %\\
       %&\leq& 
       \leq (1 + \log(K)).\label{case4}
   \end{eqnarray*}
    \underline{Case IV}: $\beta > 2$. The function  $k^{1-\beta}$ is monotonically decreasing in $k$ and so
    \begin{eqnarray*}
      \sum_{k=1}^{K}k^{1-\beta}
      &\leq& \big(1+\int_1^{K}x^{1-\beta}dx\big) = \big(1 + \frac{K^{2-\beta} - 1}{2-\beta}\big)
      \leq \frac{(\beta-1)}{\beta-2}. \label{casev5} 
    \end{eqnarray*}
   %Replacing $N$ by $1$ we can consolidate the cases I-III in which $K^{2-\beta}\leq (K+1)^{2-\beta}$, to be bounded by $(K+1)^{2-\beta}/(2-\beta)$, and cases IV-V immediately follow.
\begin{comment} \item Using the same cases as in (i), replacing $K$ by $2K-1$ and $N$ by $K+1$ we have that,
  $$\sum_{k=K+1}^{2K+1}k^{1-\beta}\leq
  \begin{cases}
      \frac{2K^{2-\beta}-K^{2-\beta}}{2-\beta}\leq \frac{3K^{2-\beta}}{2-\beta}& \beta\in(0,2)\\
      (K+1)^{-1}+\log((2K-1)/(K+1))\leq (K+1)^{-1}+\log(2) &\beta=2\\
      \frac{K^{2-\beta}(\beta-1)}{\beta-2} &\beta>2
  \end{cases}$$
\end{comment}
   \item We consider different cases for $\beta$. \\
   \underline{Case I}:  $\beta\in [0,1)$. It follows from the fact that $k^{1-\beta}$ is increasing in $k$ that
    \begin{eqnarray*}
     \sum_{k=1}^{K}{k^{1-\beta}}
     &\geq& \big(1+\int_{1}^{K}x^{1-\beta}dx\big) %\nonumber\\
     %&=& 
     =\big(1+\frac{K^{2-\beta}-1}{2-\beta}\big) %\nonumber\\
     %&\geq& 
     \geq \frac{K^{2-\beta}}{2-\beta}\geq \frac{K^{2-\beta}}{2}.\label{casep1}
    \end{eqnarray*}
     \underline{Case II}: $\beta\in [1,2)$. The function $k^{1-\beta}$ is nonincreasing in $k$, so
     $$
     \sum_{k=1}^{K}{k^{1-\beta}}
     \geq K K^{1-\beta}=
     K^{2-\beta}{\geq \frac{K^{2-\beta}}{2}}.$$
     \underline{Case III}: $\beta = 2$. It follows from the fact that $k^{1-\beta}$ is decreasing in $k$ that
    \begin{eqnarray*}
     \sum_{k=1}^{K}{k^{1-\beta}}&=& \sum_{k=1}^{K}k^{-1} \geq \int_{k=1}^{K+1} x^{-1}dx =\log (K+1).
     \end{eqnarray*}
     %\underline{Case IV}: $\beta >2$.  It follows from the fact that $k^{1-\beta}$ is decreasing in $k$ \ade{that $
     %\sum_{k=1}^{K}{k^{1-\beta}}
     %\geq 1.$}
    %\begin{eqnarray*}
     %\sum_{k=1}^{K}{k^{1-\beta}}&\geq&  \int_{k=1}^{K+1} x^{1-\beta}dx =\frac{(K+1)^{2-\beta}-1}{2-\beta}\geq \frac{1}{\beta-2}.\label{casep4}
     %\end{eqnarray*}
     %Thus, we obtain the desired result.
\item {For $K=1$ and $K=2$ the inequality trivially holds.}
%\begin{equation*}
 %       \sum_{k=1}^{K-1}\frac{2\log(k)}{(k+1)^3} +\frac{\log(K)}{(K+1)^2}\nonumber
%    ${\log(2)}/{(3)^2}\leq 1$.
%\end{equation*}
For any $K\geq 3$
\begin{eqnarray}
        \sum_{k=1}^{K-1}\frac{2\log(k)}{(k+1)^3} +\frac{\log(K)}{(K+1)^2}
        &\leq &\sum_{k=2}^{K-1}\frac{2}{(k+1)^2}+\frac{1}{K+1}\nonumber\\
%&=&\sum_{k=2}^{K-1}\frac{2\log(k)}{(k+1)^3}+\frac{\log(K)}{(K+1)^2}\\
%        &\leq& \frac{2\log(2)}{3^3}+\int_{2}^{K-1}\frac{2\log(x)}{(x+1)^3}dx+\frac{\log(K)}{(K+1)^2}\nonumber\\
  &\leq& \frac{2}{9}+\int_{2}^{K-1}\frac{2}{(x+1)^2}dx+\frac{1}{(K+1)}\leq 1\nonumber
        %&=& \frac{2\log(2)}{3^3}+\frac{\log(K)}{(K+1)^2}-\frac{\log(K-1)}{(K)^2}+\frac{\log(2)}{3^2}+\int_{2}^{K-1}\frac{1}{x(x+1)^2}dx\nonumber\\
        %&\leq& \frac{2\log(2)}{3^3}+\frac{\log(2)}{3^2}+\int_{2}^{K-1}(\frac{1}{x}-\frac{1}{x+1} -\frac{1}{(x+1)^2})dx %\ade{+\frac{\log(2)}{3^2}} 
        %\nonumber\\
        %&=& \frac{2}{9}+\frac{1}{K+1}-\frac{2}{K}+\frac{2}{3}\leq 1\nonumber
        %\frac{5\log(2)}{3^3}+\log(K-1) - \log(2) - \log(K) + \log(3)+ \frac{1}{K}- \frac{1}{3}\nonumber\\
%&\leq& \frac{1}{3}+\frac{1}{K} \leq 1\nonumber,
    \end{eqnarray}
    where the first ineqiality follows from $\log(k)\leq (k+1)$, and the second inequality follows from $1/(k+1)^2$ being nonincreasing. 
\end{enumerate}
\end{proof}
\section{Proof of Theorem~\ref{thm:rate_apg_constant_step} for backtracking step size}
\begin{proof}%[Proof of Theorem~\ref{thm:rate_apg_constant_step} for backtracking step size]
\begin{enumerate}[label=(\roman*).]
   \item  Using Lemma \ref{lem:rec_apg}(ii) we obtain that
\begin{equation}
\left(\sum_{k=1}^K\pi_k\right)(\omega(\bar{x}^K)-\omega^*)\leq \sum_{k=1}^K\pi_kz^k\leq \frac{1}{2}\|x^0 - x^*\|^2, \label{ratepiback} 
\end{equation}
where the inequalities follow from the convexity of $\omega$, and the definition of $\pi_k$, similarly to derivation of \eqref{2.34}. Bounding the sum of $\pi_k$, we get
\begin{eqnarray}
  \sum_{k=1}^K\pi_k &=& \sum_{k=1}^{K-1}s^2_{k-1}(\sigma_{k}t_k-\sigma_{k+1}t_{k+1}) + \sigma_Kt_Ks^2_{K-1} \nonumber\\
  &\geq & \sum_{k=1}^{K-1} 
  \frac{(k+1)^2}{4}(\sigma_{k}t_k-\sigma_{k+1}t_{k+1}) + \sigma_Kt_K\frac{(K+1)^2}{4} \nonumber\\
  &=& \sum_{k=1}^{K}\frac{\sigma_kt_k}{4}((k+1)^2-k^2)%\geq \frac{t_K}{2}\sum_{k=1}^{K}{\sigma_k}k
  \geq \frac{1}{2\tilde{\alpha}_1}\sum_{k=1}^Kk^{1-\beta}%\nonumber\\
  %&\geq& 
  \geq \begin{cases}\frac{K^{2-\beta}}{4\tilde{\alpha}_1}, & \beta\in (0,2),\\
  \frac{\log(K+1)}{2\tilde{\alpha}_1} &\beta=2,\end{cases}\label{sum_piback}
\end{eqnarray}
where the first inequality follows from Lemma~\ref{lem:seq}, the second inequality is due to the fact that $t_k$ is nonincreasing in $k$,  the third inequality follows from $t_K\geq \min\{\frac{\gamma}{L_1+L_2}, \bar{t}\}=\tilde{\alpha}_1^{-1}$, and the last inequality follows from Lemma \ref{lem:sum_bounds}(ii). Substituting the bound \eqref{sum_piback} into \eqref{ratepiback}, we obtain the desired result.

\item For any $\beta\in (0, 2)$, 
it follows from Proposition~\ref{prop:vbound} that $\varphi(x^k)-\varphi(x^*)\leq \tilde{\alpha}_2 (\tilde{\alpha}_1\bar{t})^{-1}(k+1)^{-\beta}$. Thus, by the convexity of $\varphi$, and the definition of $\pi_k$, and $s_{k-1}\leq k$ from Lemma~\ref{lem:seq}, and \eqref{sum_piback} we have that
    \begin{eqnarray}
   \varphi(\bar{x}^K) - \varphi(x^*)&\leq& \frac{1}{ (\sum_{k=1}^{K}\pi_k)}\sum_{k=1}^{K}\pi_k(\varphi(x^k) - \varphi(x^*))\nonumber\\
    %&\leq& {\tilde{\alpha}_2(\tilde{\alpha}_1\bar{t})^{-1}(\sum_{k=1}^{K-1} s_{k-1}^2(\sigma_kt_k-\sigma_{k+1}t_{k+1})k^{-\beta}+\sigma_Kt_Ks_{K-1}^2K^{-\beta})}\nonumber\\
    &\leq& \frac{\tilde{\alpha}_2(\tilde{\alpha}_1\bar{t})^{-1}}{K^{2-\beta}(4\tilde{a_1})^{-1}}(\sum_{k=1}^{K-1} k^2(\sigma_kt_k-\sigma_{k+1}t_{k+1})k^{-\beta}+ K^{2-\beta}\sigma_Kt_K)\nonumber\\
    &=& \frac{4\tilde{\alpha}_2\bar{t}^{-1}}{K^{2-\beta}}\sum_{k=1}^{K} \sigma_kt_kk^{-\beta}(k^2-(k-1)^2)%\nonumber\\
    %&\leq& \tilde{\alpha}_2\tilde{\alpha}_1^{-1}\sum_{k=1}^{K}k^{-2\beta}(2k-1)\nonumber\\
    %&\leq & 
    \leq \frac{8\tilde{\alpha}_2}{K^{2-\beta}}\sum_{k=1}^{K}k^{1-2\beta}.\label{2.34c}
    \end{eqnarray}  
    Applying  Lemma \ref{lem:sum_bounds}(i) and $(K+1)^{2-2\beta}\leq 4K^{2-2\beta}$ to \eqref{2.34c}, we obtain the desired result for $\beta\in(0,2).$ 
    %Applying Lemma \ref{lem:sum_bounds}(i) and the bound in \eqref{sum_piback} to \eqref{2.34c}, we derive that  
    %\begin{equation}
    %     \varphi(\bar{x}^K) - \varphi(x^*)\leq\begin{cases}
    %            \frac{8(2-\beta)\tilde{\alpha}_2}{(1-\beta)K^{\beta}},& \beta\in [0,1),\\         \frac{4\tilde{\alpha}_2(1+\log(K))}{K}, &\beta = 1,\\
    %            \frac{2(2\beta-1)\tilde{\alpha}_2}{(\beta-1)K^{2-\beta}}, &\beta = (1,2).
    %        \end{cases}\label{phibound2}
    %    \end{equation}
        Similarly, for the  case where $\beta=2$, it follows from Proposition~\ref{prop:vbound} that $\varphi(x^k)-\varphi(x^*)\leq \frac{\tilde{\alpha}_3+\tilde{\alpha}_4\log(k)}{\tilde{\alpha}_1\bar{t}(k+1)^2}$, and thus,
    \begin{eqnarray}
&&\varphi(\bar{x}^K) - \varphi(x^*)\leq\frac{1}{\sum_{k=1}^{K}\pi_k}\sum_{k=1}^{K}\pi_k(\varphi(x^k) - \varphi(x^*))\nonumber\\
    %&\leq& \sum_{k=1}^{K-1}s_{k-1}^2(\sigma_kt_k-\sigma_{k+1}t_{k+1})\frac{\tilde{\alpha}_3+\tilde{\alpha}_4\log(k)}{\tilde{\alpha}_1\bar{t}(k+1)^2}
    %+s_{K-1}^2\sigma_{K}t_{K}\frac{\tilde{\alpha}_3+\tilde{\alpha}_4\log(K)}{\tilde{\alpha}_1\bar{t}(K+1)^2}
    \nonumber\\
    &&\leq \frac{(\tilde{a}_1\bar{t})^{-1}2\tilde{a}_1}{\log(K+1)}\left(\sum_{k=1}^{K-1}\frac{(\sigma_kt_k-\sigma_{k+1}t_{k+1})k^2(\tilde{\alpha}_3+\tilde{\alpha}_4\log(k))}{(k+1)^2}
    +\frac{\sigma_{K}t_{K}K^2(\tilde{\alpha}_3+\tilde{\alpha}_4\log(K))}{(K+1)^2}\right)
    \nonumber\\    &&=\frac{2\tilde{\alpha}_3\bar{t}^{-1}}{\log(K+1)}\sum_{k=1}^{K}\sigma_kt_k\left(\frac{k^2}{(k+1)^2}-\frac{(k-1)^2}{k^2}\right)+\frac{2\tilde{\alpha}_4\bar{t}^{-1}}{\log(K+1)}\sum_{k=1}^{K}\sigma_kt_k\left(\frac{k^2\log(k)}{(k+1)^2}-\frac{(k-1)^2\log(k-1)}{k^2}\right)\nonumber\\
    &&\leq \frac{2\tilde{\alpha}_3}{\log(K+1)}\sum_{k=1}^{K}\frac{(2k^2-1)}{k^4(k+1)^2}+
    \frac{2\tilde{\alpha}_4}{\log(K+1)}\sum_{k=1}^{K}\left(\frac{\log(k)}{(k+1)^2}-\frac{(k-1)^2\log(k-1)}{k^4}\right)
     \nonumber\\
     %&&\leq \frac{4\tilde{\alpha}_3}{\log(K+1)}\sum_{k=1}^{K-1}\frac{1}{k^4}+
%\frac{2\tilde{\alpha}_4}{\log(K+1)}\left(\sum_{k=1}^{K-1}\frac{\log(k)(2k+1)}{(k+1)^4}+ \frac{\log(K)}{(K+1)^2}\right)
     \nonumber\\
    &&\leq \frac{2(4\tilde{\alpha}_3+\tilde{\alpha}_4)}{\log(K+1)}.\nonumber%\label{vgen}
    \end{eqnarray}
    where the second inequality is due to the definition of $\pi_k$, $s_{k-1}\leq k$ from Lemma~\ref{lem:seq}, and \eqref{sum_piback}, the third inequality is due to $t_k\leq \bar{t}$, and the last inequality is derived similarly to \eqref{ratev6} in the proof of Theorem~\ref{thm:rate_apg_constant_step}.
    %that 
     %   $$\varphi(\bar{x}^K) - \varphi(x^*)\leq \frac{2(4\tilde{\alpha}_3+\tilde{\alpha}_4)}{\log(K+1)}.$$
\end{enumerate}
\end{proof}

\end{document}